\newtheorem{thm}{Theorem}[section]
 \newtheorem{lem}[thm]{Lemma}
 \newtheorem{prop}[thm]{Proposition}
 \theoremstyle{definition}
 \theoremstyle{remark}
 \newtheorem{rem}[thm]{Remark}
 \newtheorem*{claim}{Claim}
 \numberwithin{equation}{section}
\newtheorem{hypothesis}[thm]{Hypothesis}
\newcommand*{\bigchi}{\mbox{\large$\chi$}}
\DeclareMathOperator\erf{erf}
\newcommand{\cB}{{\mathcal B}}
\newcommand{\cI}{{\mathcal I}}
\newcommand{\cK}{{\mathcal K}}
\newcommand{\sH}{{\mathsf H}}
\newcommand{\Tr}{{\operatorname{Tr}}}
\begin{document}

%
%
%
%
%
%
%
%
%

\title[Numerical Fredholm determinants]
 {Numerical Fredholm determinants for matrix-valued kernels on the real line}

\author[Gallo]{Erika Gallo}

\address{%
Department of Mathematics\\
The University of Texas at Arlington\\\
Arlington, TX, 76019\\
USA}

\email{erika.gallo@uta.edu}

\author[Zweck]{John Zweck}
\address{%
Department of Mathematics\\
New York Institute of Technology\\
New York, NY, 10023\\
USA\\
\href{https://orcid.org/0000-0001-9255-9757}{ORCID: 0000-0001-9255-9757}}
\email{jzweck@nyit.edu}

\author[Latushkin]{Yuri Latushkin}
\address{%
Department of Mathematics\\
University of Missouri\\
Columbia, MO 65211\\
USA\\
\href{https://orcid.org/0000-0002-8259-5655}{ORCID: 0000-0002-8259-5655}}
\email{latushkiny@missouri.edu}

\subjclass{Primary 65R20, 65F40; Secondary 47G10}

\keywords{Trace class operator; Matrix-valued kernel;
Fredholm determinant; Numerical quadrature;
Birman-Schwinger operator}


\begin{abstract}
We analyze a numerical method for computing Fredholm determinants of 
trace class and Hilbert Schmidt integral operators defined in terms of matrix-valued 
kernels on the entire real line. With this method, the Fredholm determinant
is approximated by the determinant of a matrix constructed by truncating
the  kernel of the operator to a  finite interval and then applying a quadrature rule. 
Under the assumption that
the kernel decays exponentially, we  
derive an estimate relating the Fredholm determinant of
the operator on the real line to that of its truncation to a finite interval.
Then we derive a quadrature error estimate relating the 
Fredholm determinant of a matrix-valued kernel on a finite interval to
its numerical approximation obtained via an adaptive composite Simpson’s
quadrature rule. 
These results extend the analysis  of Bornemann which focused
on  Fredholm determinants of trace class operators defined by 
scalar-valued kernels on a finite interval. 
Numerical results are provided for a Birman-Schwinger operator
that characterizes the stability of stationary solutions of nonlinear wave equations.
\end{abstract}

\maketitle

\section{Introduction}

Fredholm determinants
were originally  introduced by Fredholm~\cite{Fred1903} to characterize the 
solvability of integral equations of the second kind, 
$(\mathcal I + z \mathcal K) \boldsymbol \phi = \boldsymbol \psi$.
Since then they have found applications across a wide range 
of applied mathematics and mathematical physics (see 
Simon~\cite{Simon} and Bornemann~\cite{Bornemann} for detailed
literature reviews). 
Inspired by the von-Koch formula for the determinant of a matrix,
Hilbert~\cite{hilbert1904grundzuge} obtained a series expansion of the regular
Fredholm determinant, $\operatorname{det}_1(\mathcal I + z \mathcal K)$,
 of an integral operator, $\mathcal K$, with  a scalar-valued 
kernel on a finite interval. The $n$-th term in this  expansion  
involves an $n$-dimensional integral of a $n\times n$ determinant
of a matrix constructed from the kernel.  Approximating these
integrals by Riemann sums and applying the von-Koch formula,
Hilbert approximated the  Fredholm determinant of $\mathcal K$
by the determinant of a large matrix. Just over  a century later, 
Bornemann~\cite{Bornemann} obtained an estimate
on the rate of convergence of these matrix determinants
to the regular Fredholm determinant of a trace class operator on $L^2([a,b],\mathbb C)$. 
This rate of convergence depends on the order of the approximating quadrature rule 
and the degree of smoothness of the kernel.
Such error estimates are essential for practitioners who need to numerically compute 
Fredholm determinants. 
In \cite[Section~8]{Bornemann} Bornemann also studied the case of operators
on $L^2([a,b],\mathbb C^k)$ with matrix-valued kernels by using a trick of
Fredholm~\cite{Fred1903} to convert a system
of integral equations on a bounded interval, $I$, to an equivalent single integral equation
on a disjoint union of $k$ translates of $I$. 

Several recent papers on analytical solutions of nonlinear wave equations
and methods to compute spectral properties of  differential equations
apply or mention Bornemann's numerical computation of Fredholm determinants.
 Grudsky, Remling and Rybkin~\cite{Grudsky2015Inverse} 
 derive an analytical solution to the Cauchy problem
 for the KdV equation  in terms of the Fredholm determinant of 
 a Hankel operator obtained using the inverse scattering transform.
 They comment that Bornemann's method 
 could be applied to numerically evaluate their solution.
Cafasso, Claeys and Girotti~\cite{cafasso2021fredholm} 
use Fredholm determinants of certain integral
operators   to obtain solutions of the Painlev\'e~II hierarchy. 
These Fredholm determinants arise in 
random matrix and statistical physics models. They comment that 
Bornemann's method 
would be a more tractable approach for computing solutions of the
Painlev\'e II hierarchy than  numerically evaluating  Painlev\'e transcendents. 
Scheel~\cite{Scheel20223Nonlinear} mentions the difficulty of numerically evaluating 
Fredholm determinants in his paper on nonlinear
eigenvalue methods for determining  the stability of nonlinear waves. 
Zhao and Barnett~\cite{zhao2015robust} develop a numerical method 
for the \lq\lq drum\rq\rq problem, which is the Dirichlet problem 
 for the Laplacian on a two-dimensional
domain. They characterize the eigenfrequencies of a drum as the zeros
of the Fredholm determinant of a double-layer potential operator 
obtained via a boundary integral equation. Following Bornemann~\cite{Bornemann}, they 
approximate this Fredholm determinant using the  determinant of an 
associated Nystr\"om matrix. They analyze the error in this approximation
and demonstrate exponential convergence of the numerical root of the
matrix determinant  to the true eigenfrequency.

Our interest in  Fredholm determinants  stems from applications to the
stability of stationary and breather
solutions of nonlinear wave equations such as the 
complex Ginzburg-Landau equation~\cite{Kap,EssSpec}. 
It is well known~\cite{Kap} that the set of eigenvalues of
the linearization of the complex Ginzburg-Landau equation
about  a stationary (soliton) solution
is given by the set of zeros of the Evans function, which is a complex analytic function
of the spectral parameter.  
Gesztesy, Latushkin, and Makarov~\cite{EJF} 
established a general theory which showed that 
the  Evans function is closely related to the 2-modified
Fredholm determinant of a certain Birman–Schwinger operator, 
$\mathcal K=\mathcal K(\lambda)$, associated to the pulse. 
In particular, counted with multiplicity, 
the set of zeros of $\operatorname{det}_2(\mathcal I+\mathcal K(\lambda))$
coincides with the set of 
zeros of the Evans function, $E=E(\lambda)$. 
This Birman–Schwinger operator is defined in terms of a matrix-valued,
semi-separable Green’s kernel, $\textsf  K(x,y;\lambda)$, on the real line. 
In a forthcoming paper~\cite{Gallo}, we apply this theory to derive an explicit formula for the kernel
 $\textsf  K$ in the case of a stationary  solution of 
the complex Ginzburg–Landau equation and numerically study its 
Fredholm determinant.
The reason for our interest in 
Fredholm determinants of Birman–Schwinger operators is that they may
provide a means to determine the Floquet stability of time-periodic solutions of 
nonlinear wave equations, for which it is not possible to define or numerically calculate 
an Evans function~\cite{GeLaZu08}.

Motivated by this application, in this paper we  extend Bornemann's results 
to matrix-valued kernels on the entire real line. Our first main result is a  domain truncation
estimate relating the Fredholm determinant 
of an operator on $L^2(\mathbb R, \mathbb C^k) $ to that of the truncation 
of the operator to a finite interval. For this result we assume that the kernel
decays exponentially at infinity.
In related work using the Evans function, Sandstede and Scheel~\cite{SANDSTEDE2000233} studied the effects that truncating an unbounded domain to a finite interval has on the stability properties of nonlinear waves.
Our second main result is a quadrature error estimate
relating the Fredholm determinant of an integral operator with 
a matrix-valued kernel on a finite interval
to its numerical approximation  obtained via an adaptive composite Simpson's quadrature rule.
This result gives the rate of convergence of the numerical Fredholm determinant to the
true  determinant  as the maximum grid spacing in the quadrature rule approaches zero.
This result is somewhat different from the corresponding result of Bornemann
which concerns  the rate of convergence of the numerical Fredholm determinant to the
true  determinant as the order of the quadrature rule increases to infinity. 
Estimates like Bornemann's are useful 
in situations where a function is approximated by a single high-degree polynomial on the entire domain $[a,b]$.  However,  it is often more practical to 
employ a composite quadrature rule in which the domain $[a,b]$ is divided into 
subintervals and the function is separately approximated  by a low degree polynomial on each subinterval.

In section~\ref{sec:SchattenFred} we review the modern approach to the definition of the regular and 2-modified Fredholm determinants of trace class and Hilbert-Schmidt operators on a separable Hilbert space. With this approach, the regular Fredholm determinant of a
trace class operator,
$\mathcal K$, is defined 
in terms of the traces of  wedge products of $\mathcal K$.
In section~\ref{sec:FredDetIntegral} we 
derive analytical formulae for these  determinants in the case of 
operators with matrix-valued kernels on the entire real line. 
Bornemann~\cite{Bornemann} extended Fredholm's formula for the regular Fredholm determinant to matrix-valued kernels on a finite interval 
 by exploiting the equivalence between a $k\times k$
system of integral equations on a finite interval and a single integral equation on a 
disjoint union of $k$ copies of that interval.
Here, we take a different approach, which is to explicitly calculate the trace
of the operator ${\wedge}^n\mathcal{K}$. 
This approach enables us to establish the
formula for matrix-valued kernels on the entire real line, and also to handle
the extension to 2-modified Fredholm determinants.
 In section~\ref{vonKochsection}
we derive analogous formulae for  matrix-valued kernels defined on a finite discrete domain.
Such finite domains should be regarded as being discretizations of  finite intervals.
The formulae we obtain generalize the classical von-Koch formula for the determinant of a matrix.
In section~\ref{quadsection} we use these formulae  to define numerical approximations to  Fredholm determinants.
In section~\ref{truncsection}, we use the results of sections~\ref{sec:FredDetIntegral} and \ref{vonKochsection} to 
derive the domain truncation estimate. 
 Using a similar approach, in section~\ref{quadsection}, we derive the quadrature error
 estimate.
 Finally, to verify these error estimates, in section~\ref{sec:Results}, we show numerical results for a Birman-Schwinger operator
whose Fredholm determinant characterizes the linear stability of the hyperbolic secant
solution of the nonlinear Schr\"odinger equation. 
Throughout this paper, $\mathcal K$ denotes an operator, $\textsf K$ a (matrix-valued) kernel,
and $\mathbf K$ a matrix.

 \section{Schatten class operators and Fredholm determinants}\label{sec:SchattenFred}

In this section we review background material from 
Teschl~\cite{TeschlFA}, Simon~\cite{Simon} and 
Gohberg, Goldberg and Krupnik~\cite{GGK}
on the spaces of  trace class and Hilbert-Schmidt operators and  their Fredholm determinants. 

Let $\mathsf{H}$ be a real or complex separable 
Hilbert space with inner product $\langle \cdot, \, \cdot\rangle_\sH$, which is conjugate linear in the first slot and linear in the second slot.  
We let $\cB_\infty(\sH)$ denote the space of compact operators on $\sH$,
and let  $\cK\in \cB_\infty (\sH)$.
Since $\cK\cK^*$ and $\cK^*\cK$ are compact, self-adjoint operators, 
by the spectral theorem there exist orthonormal bases of eigenfunctions 
$\{\psi_\ell\}_{\ell=1}^\infty$ and $\{\phi_\ell\}_{\ell=1}^\infty$ such that 
\begin{align}
\label{KK1}
\cK^*\cK\phi_\ell=\mu_\ell^2\phi_\ell, \,\psi_\ell=\frac1{\mu_\ell}\cK\phi_\ell,\,
\cK\cK^*\psi_\ell=\mu_\ell^2\psi_\ell,\, \phi_\ell=\frac1{\mu_\ell}\cK^*\psi_\ell,
\end{align}
where $\mu_\ell$ are the singular values of  $\cK$, that is, the eigenvalues of the operator $(\cK\cK^*)^{1/2}$, ordered such that $\mu_1\ge\mu_2\ge\dots \geq 0$, $\mu_\ell\to0$ as $\ell\to\infty$.
Furthermore,
\begin{equation}\label{Kdecomp}
    \mathcal{K} = \sum_{j=1}^\infty \mu_j \langle \phi_j, \cdot \rangle \psi_j.
\end{equation}

The \emph{Schatten $p$-class} is the subspace of $\cB_\infty(\sH)$ defined by 
\begin{equation}
    \mathcal{B}_p(\sH) = \{ \mathcal{K} \in \cB_\infty(\sH) : \|\mathcal{K}\|_p < \infty\}, 
\end{equation}
where 
\begin{equation}
    \|\mathcal{K}\|_p = \left( \sum_{j=1}^{\infty} |\mu_j|^p \right)^{1/p} =: \|\mu\|_{\ell^p}.
\end{equation}
The two most important examples are the spaces $\cB_1(\sH)$  of \emph{trace class} operators and  $\cB_2(\sH)$ of \emph{Hilbert-Schmidt} operators. Since $\ell_1 \subset \ell_2$, all trace class
operators are Hilbert-Schmidt.

If $\mathcal{K}$ is trace class, then the \emph{trace} of $\cK$ 
is the linear transformation, 
$\Tr: \cB_1(\sH) \rightarrow \mathbb{C}$,  defined by
\begin{equation}\label{eq:traceprod}
\operatorname{Tr}(\mathcal K) \,\,=\,\, \sum_k
\langle h_k, \mathcal K h_k \rangle_\sH,
\end{equation}
where here and below $\{h_k\}$ is any orthonormal basis for $\mathcal H$. 
We recall that $\operatorname{Tr}(\mathcal{K})$ is finite,
\begin{equation}
    |\operatorname{Tr}(\mathcal{K})| \leq \|\mathcal{K}\|_1,
\end{equation}
and  is independent of the choice of orthonormal basis.
There are two other important formulae for the trace. The first states
that 
\begin{equation}\label{eq:TraceByEval}
\operatorname{Tr}(\mathcal K) 
 \,\,=\,\, \sum\limits_\ell \mu_\ell \langle \phi_\ell, \psi_\ell\rangle_\sH,
 \end{equation}
where the $\{\mu_\ell\}_\ell$ are the singular values of $\cK$ and 
 $\{\phi_\ell\}_\ell$, and $\{\psi_\ell\}_\ell$ are the 
 orthonormal bases in \eqref{KK1}. 
The second result is Lidskii's Theorem which states that
\begin{equation}\label{eq:lidskii}
\operatorname{Tr}(\mathcal K) 
 \,\,=\,\,   \sum\limits_\ell \lambda_\ell,
\end{equation}
where $\{\lambda_\ell\}_\ell$ is the set of eigenvalues of $\cK$ counted
with algebraic multiplicity.

The Fredholm determinant of a trace class operator, $\mathcal{K}$, is defined in terms of the traces of the induced operators, ${\wedge}^n\mathcal{K},$ on the wedge product spaces, ${\wedge}^n \sH$. Summarizing~\cite[Section 1.5]{Simon},
 let $\bigotimes^n \sH = \sH \otimes \dots \otimes \sH$ be the $n$-fold tensor product of $\sH.$
Given any $\psi_1, \dots, \psi_n \in \sH,$ we define
\begin{equation}\label{psiwedge}
    \psi_1 \wedge \dots \wedge \psi_n = \frac{1}{\sqrt{n!}} \sum_{\pi \in \sigma_n} (-1)^{\pi} \psi_{\pi(1)} \otimes \dots \otimes \psi_{\pi (n)},
\end{equation}
where $\sigma_n$ is the set of permutations on $\{1,\dots,n\}$ and $(-1)^{\pi}$ is the sign of the permutation $\pi.$ For $n>0$, 
$\wedge^n \sH$ is defined to be
 the closure in $\bigotimes^n \sH$ of the set of  all finite linear combinations of the form $\psi_1 \wedge \dots \wedge \psi_n$,  and $\wedge^0 \sH = \mathbb{C}$. 
Because of the identity
\begin{equation}\label{eq:detfromwedge}
    (\phi_1 \wedge \dots \wedge \phi_n, \psi_1 \wedge \dots \wedge \psi_n) = \det((\phi_j,\psi_i)_{1 \leq i,j \leq n}),
\end{equation}
if $\{h_k\}_{k=1}^{\infty}$ is an orthonormal basis of $\sH$, then 
$\{h_{j_1} \wedge \dots \wedge h_{j_n}\}_{j_1 < \dots < j_n}$ is an orthonormal basis for $\wedge^n \sH$.

If $\mathcal{K} : \sH \rightarrow \sH$ is a linear operator, then there exists an induced linear operator $\mathcal{K} \otimes \dots \otimes \mathcal{K} : \bigotimes^n \sH \rightarrow \bigotimes^n \sH$ so that 
\begin{equation}\label{Kcrossphi}
    (\mathcal{K} \otimes \dots \otimes \mathcal{K})(\phi_1 \otimes \dots \otimes \phi_n) = \mathcal{K}\phi_1 \otimes \dots \otimes \mathcal{K}\phi_n.
\end{equation}
Clearly, $\mathcal{K} \otimes \dots \otimes \mathcal{K}$ maps $\wedge^n \sH$ into $\wedge^n \sH.$ We denote this operator by 
\begin{equation}
 {\wedge}^n\mathcal{K} : {\wedge}^n \sH \rightarrow {\wedge}^n \sH.
\end{equation}

We observe that if $\mathcal{K} : \sH \rightarrow \sH$ is compact with eigenvalues $\{\lambda_j\}_{j=1}^{\infty},$ then the set of eigenvalues of $\wedge^n\mathcal{K}$ is given by $\prod_{j_1 < \dots < j_n} \lambda_{j_k}$. Consequently, by  Lidskii's theorem\eqref{eq:lidskii}, 
\begin{equation}
    \Tr({\wedge}^k\mathcal{K}) = \sum_{j_1 < \dots < j_k} \lambda_{j_1} \dots \lambda_{j_k}.
\end{equation}
In the special case that $\dim \sH = N < \infty$,  we have that
\begin{equation}\label{tracedet}
\det(\mathcal{I} + \mathcal{K}) \,\,=\,\,  \prod_{\ell=1}^N (1 + \lambda_\ell) 
\,\,=\,\,   \sum_{n=0}^N \Tr({\wedge}^n\mathcal{K}).
\end{equation}
This identity motivates the definition of the regular Fredholm determinant
of  a trace class operator.

\begin{prop}\cite[Lemma~3.3, Theorems 3.5 and 3.7]{Simon}
Suppose $\mathcal{K}$ is a trace class operator on separable Hilbert space $\sH.$ Then $\wedge^n\mathcal{K}$ is a trace class operator on $\wedge^n \sH$ and
 \begin{equation}
     \left\|{\wedge}^n\mathcal{K}\right\|_1 \leq \frac{1}{n!}\|\mathcal{K}\|_1^n.
 \end{equation}
 Consequently, the regular Fredholm determinant of $\mathcal{K},$ which is 
 defined by the series
 \begin{equation}
     {\det}_1(\mathcal{I} + z\mathcal{K}) := \sum_{n=0}^{\infty}z^n \Tr({\wedge}^n\mathcal{K}),
 \end{equation}
 converges uniformly and absolutely  to an entire function of $z$ such that
 \begin{equation}
     |{\det}_1(\mathcal{I} +z\mathcal{K})| \leq \exp(|z|\|\mathcal{K}\|_1).
 \end{equation}
Furthermore,
 $\mathcal{I} + \mathcal{K}$ is invertible if and only if ${\det}_1(\mathcal{I} + \mathcal{K}) \neq 0$, and 
 \begin{equation}
 {\det}_1(\mathcal{I} + \mathcal{K}) = \prod_{\ell=1}^\infty (1 + \lambda_\ell).
 \end{equation}
\end{prop}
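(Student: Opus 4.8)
The plan is to deduce all four assertions from the singular value decomposition~\eqref{Kdecomp}, Lidskii's theorem~\eqref{eq:lidskii}, and the finite-dimensional identity~\eqref{tracedet}, in that logical order. \textbf{Singular values of $\wedge^n\mathcal{K}$ and the trace-norm bound.} First I would compute, using \eqref{psiwedge}, \eqref{Kcrossphi} and \eqref{Kdecomp}, that
\[
(\wedge^n\mathcal{K})\bigl(\phi_{j_1}\wedge\cdots\wedge\phi_{j_n}\bigr) = \mu_{j_1}\cdots\mu_{j_n}\,\psi_{j_1}\wedge\cdots\wedge\psi_{j_n}
\]
for all $j_1<\cdots<j_n$. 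Since \eqref{eq:detfromwedge} shows that $\{\phi_{j_1}\wedge\cdots\wedge\phi_{j_n}\}$ and $\{\psi_{j_1}\wedge\cdots\wedge\psi_{j_n}\}$ are orthonormal systems in $\wedge^n\sH$, this exhibits the singular values of $\wedge^n\mathcal{K}$ as exactly the products $\mu_{j_1}\cdots\mu_{j_n}$ with $j_1<\cdots<j_n$. Hence $\|\wedge^n\mathcal{K}\|_1 = e_n(\mu_1,\mu_2,\dots)$, the $n$-th elementary symmetric function of the singular values, and the bound $\|\wedge^n\mathcal{K}\|_1\le\frac1{n!}\|\mathcal{K}\|_1^n$ follows by observing that $n!\,e_n(\mu)$ is a sub-sum, with nonnegative terms, of the multinomial expansion of $\bigl(\sum_j\mu_j\bigr)^n = \|\mathcal{K}\|_1^n$. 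In particular $\wedge^n\mathcal{K}$ is trace class.

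\textbf{Convergence and the exponential bound.} Using $|\Tr(\wedge^n\mathcal{K})|\le\|\wedge^n\mathcal{K}\|_1$ together with the previous bound,
\[
\sum_{n=0}^\infty |z|^n\,|\Tr(\wedge^n\mathcal{K})| \;\le\; \sum_{n=0}^\infty \frac{|z|^n}{n!}\|\mathcal{K}\|_1^n \;=\; \exp\!\bigl(|z|\,\|\mathcal{K}\|_1\bigr),
\]
so by the Weierstrass $M$-test the defining series converges absolutely and uniformly on compact subsets of $\mathbb{C}$, hence to an entire function of $z$, and the stated bound on $|{\det}_1(\mathcal{I}+z\mathcal{K})|$ is immediate.

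\textbf{The product formula and invertibility.} By Weyl's inequality, $\sum_\ell|\lambda_\ell|\le\sum_\ell\mu_\ell = \|\mathcal{K}\|_1<\infty$, so $\prod_\ell(1+z\lambda_\ell)$ converges to an entire function, locally uniformly in $z$. Since, as recorded just before the statement, the eigenvalues of $\wedge^n\mathcal{K}$ are the products $\lambda_{j_1}\cdots\lambda_{j_n}$ over $j_1<\cdots<j_n$, Lidskii's theorem~\eqref{eq:lidskii} gives $\Tr(\wedge^n\mathcal{K}) = e_n(\lambda_1,\lambda_2,\dots)$. It then remains to prove the generating-function identity $\sum_{n\ge0}z^n e_n(\lambda) = \prod_\ell(1+z\lambda_\ell)$: for each finite $N$ this is \eqref{tracedet} applied on $\mathbb{C}^N$ to the diagonal operator with entries $\lambda_1,\dots,\lambda_N$, and one passes to the limit $N\to\infty$ using $\sum_\ell|\lambda_\ell|<\infty$ to dominate the tails of the series and of the product uniformly on compact $z$-sets. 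Setting $z=1$ yields ${\det}_1(\mathcal{I}+\mathcal{K}) = \prod_\ell(1+\lambda_\ell)$. Finally, since $\mathcal{K}$ is compact, the Riesz--Schauder theory gives that $\mathcal{I}+\mathcal{K}$ is invertible if and only if it is injective, i.e.\ if and only if $-1$ is not an eigenvalue of $\mathcal{K}$; and the convergent product $\prod_\ell(1+\lambda_\ell)$ vanishes precisely when some factor $1+\lambda_\ell$ vanishes, which gives the asserted equivalence.

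\textbf{Main obstacle.} The bookkeeping in the first two steps and the Riesz--Schauder argument are routine; the one point requiring genuine care is the limit $N\to\infty$ in the product formula, i.e.\ justifying the interchange of the sum over $n$ with the product over $\ell$ (equivalently, the continuity of ${\det}_1$ in the trace norm under finite-rank truncation of $\mathcal{K}$). This is precisely where trace-class summability of the singular values, and hence via Weyl of the eigenvalues, is indispensable; a mere Hilbert--Schmidt hypothesis would not suffice here, which is what forces the passage to the $2$-modified determinant in that setting.
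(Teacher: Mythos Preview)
The paper does not give its own proof of this proposition: it is stated as background and attributed to \cite[Lemma~3.3, Theorems~3.5 and~3.7]{Simon}, with no \texttt{proof} environment following it. So there is nothing in the paper to compare your argument against.

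That said, your proposal is correct and is essentially the standard proof one finds in Simon's book. The identification of the singular values of $\wedge^n\mathcal{K}$ as the elementary symmetric products of the $\mu_j$, the bound $n!\,e_n(\mu)\le\bigl(\sum_j\mu_j\bigr)^n$, the use of Lidskii's theorem to obtain $\Tr(\wedge^n\mathcal{K})=e_n(\lambda)$, and the Riesz--Schauder argument for invertibility are all exactly the expected ingredients. Your identification of the $N\to\infty$ limit in the product formula as the one genuinely delicate step is also accurate; the domination you describe (via $\sum_\ell|\lambda_\ell|<\infty$ from Weyl's inequality) is the right mechanism.
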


If $\cK$ is Hilbert-Schmidt but not trace class it is still possible to define
a Fredholm determinant  which characterizes the invertibility of $\cI+\cK$.

\begin{prop}\cite[Lemma 9.1, Theorem 9.2]{Simon}\label{prop:det2}
Let $\mathcal{K} \in \cB_2(\sH)$ and let $\exp(\cK)$ be the exponential
of $\cK$, defined by the standard power series. 
Then the operator 
$ R_2(\mathcal{K}) = (\cI + \mathcal{K})\exp(-\mathcal{K}) - \cI$ is 
trace class and the 2-modified Fredholm determinant of $\cK$ is 
defined by
\begin{equation}
    {\det}_2(\mathcal{I} + z\mathcal{K}) := {\det}_1[\cI + R_2(z\mathcal{K})] =  {\det}_1[(\cI+z\mathcal{K})\exp(-z\mathcal{K})].
\end{equation}
Furthermore, 
\begin{enumerate}
    \item[(1)] ${\det}_2(\mathcal{I} + \mathcal{K}) = \prod_{\ell}\left[ (1 + \lambda_\ell) e^{-\lambda_\ell} \right]$,
    \item[(2)] $|{\det}_2(\mathcal{I} + \mathcal{K})| \leq \emph{exp}(\Gamma_2 \|\mathcal{K}\|_2^2),$ for some constant  $\Gamma_2$,
    \item[(3)] If $\cK_1,\cK_2\in \cB_2(\sH)$, then 
     \begin{equation}
     |{\det}_2(\mathcal{I} + \mathcal{K}_1) - {\det}_2(\mathcal{I} + \mathcal{K}_2)| \leq \|\mathcal{K}_1 - \mathcal{K}_2\|_2 \emph{exp}(\Gamma(\|\mathcal{K}_1\|_2 + \|\mathcal{K}_2\|_2 + 1)^2),
     \end{equation}
    \item[(4)] If $\mathcal{K} \in \cB_1,$ then \begin{equation}\label{det2exp}
{\det}_2(\mathcal{I} + \mathcal{K}) = e^{-\operatorname{Tr}(\mathcal{K})}
{\det}_1(\mathcal{I} + \mathcal{K}),
            \end{equation} 
     and 
    \item[(5)] $\mathcal{I} + \mathcal{K}$ is invertible if and only if ${\det}_2(\mathcal{I} + \mathcal{K}) \neq 0$.
\end{enumerate}
\end{prop}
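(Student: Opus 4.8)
The plan is to reduce every assertion to the regular determinant ${\det}_{1}$ and the preceding proposition, using the operator identity $\cI+R_{2}(\cK)=(\cI+\cK)\exp(-\cK)$. First one must check that $R_{2}(\cK)$ really is trace class. Expanding the exponential, the constant and linear terms cancel and one is left with
\begin{equation*}
R_{2}(\cK)\;=\;(\cI+\cK)\exp(-\cK)-\cI\;=\;\sum_{m=2}^{\infty}(-1)^{m+1}\,\frac{m-1}{m!}\,\cK^{m}.
\end{equation*}
For $m\ge 2$, $\cK^{m}=\cK^{2}\cK^{m-2}$ is trace class with $\|\cK^{m}\|_{1}\le\|\cK^{2}\|_{1}\,\mu_{1}^{\,m-2}\le\|\cK\|_{2}^{2}\,\|\cK\|_{2}^{\,m-2}=\|\cK\|_{2}^{m}$, using $\|\cK^{2}\|_{1}\le\|\cK\|_{2}^{2}$ and $\mu_{1}\le\|\cK\|_{2}$; hence the series converges in $\|\cdot\|_{1}$ and $R_{2}(\cK)\in\cB_{1}(\sH)$. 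Consequently ${\det}_{2}(\cI+z\cK):={\det}_{1}[\cI+R_{2}(z\cK)]$ is well defined and, by the preceding proposition, entire in $z$.

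Item (5) is then immediate: $\exp(-\cK)$ is boundedly invertible, so $\cI+\cK$ is invertible if and only if $\cI+R_{2}(\cK)=(\cI+\cK)\exp(-\cK)$ is, which by the preceding proposition happens if and only if ${\det}_{1}[\cI+R_{2}(\cK)]={\det}_{2}(\cI+\cK)\neq 0$. For items (1) and (4) I would invoke the spectral mapping theorem for compact operators: if $\{\lambda_\ell\}$ are the eigenvalues of $\cK$ counted with algebraic multiplicity, then the eigenvalues of $R_{2}(\cK)=h(\cK)$, $h(\zeta):=(1+\zeta)\exp(-\zeta)-1$, are $\{h(\lambda_\ell)\}_\ell$. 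The product formula of the preceding proposition applied to $\cI+R_{2}(\cK)$ then gives
\begin{equation*}
{\det}_{2}(\cI+\cK)\;=\;{\det}_{1}[\cI+R_{2}(\cK)]\;=\;\prod_{\ell}\bigl(1+h(\lambda_\ell)\bigr)\;=\;\prod_{\ell}(1+\lambda_\ell)\exp(-\lambda_\ell),
\end{equation*}
the product converging because $\{h(\lambda_\ell)\}_\ell$ are the eigenvalues of the trace-class operator $R_{2}(\cK)$, hence absolutely summable; this is item (1). If moreover $\cK\in\cB_{1}(\sH)$ then $\sum_\ell|\lambda_\ell|<\infty$, so $\prod_\ell(1+\lambda_\ell)$ and $\prod_\ell\exp(-\lambda_\ell)=\exp(-\sum_\ell\lambda_\ell)$ converge absolutely and factor apart; combined with the product formula for ${\det}_{1}$ and Lidskii's theorem \eqref{eq:lidskii} this yields item (4), ${\det}_{2}(\cI+\cK)=\exp(-\Tr\cK)\,{\det}_{1}(\cI+\cK)$.

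Item (2) follows from item (1), the elementary estimate $|(1+\zeta)\exp(-\zeta)|\le\exp(|\zeta|^{2}/2)$ for all $\zeta\in\mathbb C$ (a short calculus argument; it gives $\Gamma_{2}=\tfrac12$), and Weyl's inequality $\sum_\ell|\lambda_\ell|^{2}\le\sum_\ell\mu_\ell^{2}=\|\cK\|_{2}^{2}$:
\begin{equation*}
|{\det}_{2}(\cI+\cK)|\;=\;\prod_\ell\bigl|(1+\lambda_\ell)\exp(-\lambda_\ell)\bigr|\;\le\;\prod_\ell\exp\!\bigl(|\lambda_\ell|^{2}/2\bigr)\;=\;\exp\!\Bigl(\tfrac12\sum_\ell|\lambda_\ell|^{2}\Bigr)\;\le\;\exp\!\bigl(\tfrac12\|\cK\|_{2}^{2}\bigr).
\end{equation*}

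The main obstacle is item (3). I would argue by differentiating along the segment $\cK_{t}:=(1-t)\cK_{2}+t\cK_{1}$, $t\in[0,1]$; put $\cL:=\cK_{1}-\cK_{2}\in\cB_{2}$ and $g(t):={\det}_{2}(\cI+\cK_{t})$, which is entire in $t$, being the composition of the affine map $t\mapsto\cK_{t}$, the (entire) map $\cB_{2}\ni\cK\mapsto R_{2}(\cK)\in\cB_{1}$, and the entire functional ${\det}_{1}$ on $\cB_{1}$. Differentiating the representation $g(t)={\det}_{1}(\cI+R_{2}(\cK_{t}))$ and using cyclicity of the trace on the trace-class products that arise, one finds that at every $t$ with $\cI+\cK_{t}$ invertible
\begin{equation*}
g'(t)\;=\;-\,g(t)\,\Tr\!\bigl[(\cI+\cK_{t})^{-1}\cK_{t}\,\cL\bigr],
\end{equation*}
whence $|g'(t)|\le|{\det}_{2}(\cI+\cK_{t})|\,\|(\cI+\cK_{t})^{-1}\|\,\|\cK_{t}\|_{2}\,\|\cL\|_{2}$. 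The crucial input is a resolvent bound, the ${\det}_{2}$–analogue of Cramer's rule, $|{\det}_{2}(\cI+\cK_{t})|\,\|(\cI+\cK_{t})^{-1}\|\le\exp\bigl(\gamma(\|\cK_{t}\|_{2}+1)^{2}\bigr)$ for a universal $\gamma$ (see~\cite{Simon}); since both $g'(t)$ and this majorant are continuous in $t$ while $\{t:\cI+\cK_{t}\text{ is not invertible}\}$ is either discrete or all of $[0,1]$ (in the latter case $g\equiv0$ and there is nothing to prove), the bound $|g'(t)|\le\|\cK_{t}\|_{2}\|\cL\|_{2}\exp\bigl(\gamma(\|\cK_{t}\|_{2}+1)^{2}\bigr)$ holds for all $t\in[0,1]$. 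Since $\cK_{t}$ is a convex combination, $\|\cK_{t}\|_{2}\le\max(\|\cK_{1}\|_{2},\|\cK_{2}\|_{2})\le\|\cK_{1}\|_{2}+\|\cK_{2}\|_{2}=:N$, and integrating over $[0,1]$ gives
\begin{equation*}
|g(1)-g(0)|\;\le\;N\,\|\cL\|_{2}\exp\bigl(\gamma(N+1)^{2}\bigr)\;\le\;\|\cK_{1}-\cK_{2}\|_{2}\,\exp\bigl(\Gamma(\|\cK_{1}\|_{2}+\|\cK_{2}\|_{2}+1)^{2}\bigr)
\end{equation*}
with $\Gamma:=\gamma+1$ (using $N\le(N+1)^{2}$), which is item (3). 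The two genuinely delicate points are the justification of this logarithmic-derivative formula (tracking which operator products lie in $\cB_{1}$) and the resolvent bound with the sharp quadratic exponent; a cruder route via $\|R_{2}(\cK_{1})-R_{2}(\cK_{2})\|_{1}$, the trace-norm Hölder inequalities, and the Lipschitz continuity of ${\det}_{1}$ is easier to carry out but produces a worse dependence on the norms.
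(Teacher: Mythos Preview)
The paper does not prove this proposition; it is quoted verbatim as background from Simon~\cite{Simon} (Lemma~9.1 and Theorem~9.2) and no argument is supplied. There is therefore no ``paper's own proof'' to compare against.

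Your proof strategy is essentially the standard one found in Simon's book and is correct in outline. The power-series computation showing $R_2(\cK)\in\cB_1$, the spectral-mapping argument for items~(1) and~(4), the elementary inequality $|(1+\zeta)e^{-\zeta}|\le e^{|\zeta|^2/2}$ combined with Weyl's inequality for item~(2), and the invertibility argument for item~(5) are all sound. For item~(3), your segment-differentiation approach with the logarithmic derivative $g'(t)=-g(t)\,\Tr[(\cI+\cK_t)^{-1}\cK_t\cL]$ is exactly the route Simon takes; you correctly identify that the resolvent bound (the $\det_2$ analogue of Cramer's rule) is the one nontrivial external input, and you cite Simon for it rather than reproving it, which is appropriate at this level. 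The only point worth flagging is that the spectral mapping assertion ``eigenvalues of $h(\cK)$ are $\{h(\lambda_\ell)\}$ with the same algebraic multiplicities'' requires a Riesz-projection argument to handle multiplicities correctly; this is standard but not entirely automatic.
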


\section{Fredholm Determinants of  Integral Operators}\label{sec:FredDetIntegral}

In Section~\ref{sec:SchattenFred}, 
 we defined  the trace and regular Fredholm determinant of a trace class operator 
 and the 2-modified Fredholm determinant of a Hilbert-Schmidt operator
 on an abstract Hilbert space.  
 In this section, we  consider Hilbert-Schmidt integral operators, $\cK$,
 on  $L^2(\mathbb R,\mathbb{C}^k)$ that are defined in terms of
  matrix-valued kernels, $\textsf{K}=\textsf{K}(x,y)$. 
  Building on a classical result of Weidmann~\cite{weidmann1966integraloperatoren},
  in~\cite{zweck2024regularity} 
  we showed that such a Hilbert-Schmidt operator, $\cK$, is trace class provided that
   the associated kernel, $\textsf K$, is Lipschitz continuous and that
 $\textsf K$ and  its partial derivatives 
  decay  exponentially  away from the diagonal. 
 The goal of this section is to derive integral formulae for the
 regular and 2-modified Fredholm determinants of the operator 
 $\cK$ in terms of the kernel $\textsf K$.  
These formulae generalize a well-known formula of Fredholm~\cite{Fred1903,Simon}
 for the regular Fredholm determinant of a trace class operator on
$L^2(\mathbb [a,b],\mathbb{C})$.

Let $X=[a,b]$ be a finite interval or $X=\mathbb R$.
We let $L^2(X,\mathbb{C}^k)$ denote the space of square-integrable functions from $X$ to $\mathbb{C}^k$ with the inner product
\begin{equation}
    \langle \boldsymbol{\phi}, \boldsymbol{\psi} \rangle = \int_{\mathbb{R}} \boldsymbol{\phi}^*(x) \boldsymbol{\psi}(x) dx,
\end{equation}
where $\boldsymbol{\psi}$ is $k \times 1$ and $\boldsymbol{\phi}^* = \overline{\boldsymbol{\phi}}^{\,T}$ is $1\times k$.

The following result~\cite{BirmanSolomjak1987} formalizes the relationship between an operator and its kernel.

\begin{prop}
Let $\emph{\textsf K}\in L^2(X\times X, \mathbb C^{k\times k})$ 
be a square-integrable matrix-valued kernel on $X$ and let
$\boldsymbol\psi\in L^2(X,\mathbb C^k)$.
Then for almost all $x\in X$, we have 
$\emph{\textsf K}(x,\cdot)\boldsymbol\psi(\cdot) \in L^1(X, \mathbb C^k)$
and  so
\begin{equation}\label{cKdef2}
(\cK \boldsymbol\psi)(x)=\int_X \emph{\textsf K}(x,y)\boldsymbol\psi(y)\,dy
\end{equation}
is defined. Moreover, $\cK \in \cB_2(L^2(X,\mathbb C^k ))$ 
is a Hilbert-Schmidt operator
and 
$\| \cK \|_{\cB_2(L^2(X, \mathbb C^k))} = \| \emph{\textsf K} \|_{L^2(X\times X,  \mathbb C^{k\times k}) }$.
Conversely, for every $\cK \in \cB_2(L^2(X, \mathbb C^k))$ there is a unique 
$\emph{\textsf K}\in L^2(X\times X, \mathbb C^{k\times k})$ for which \eqref{cKdef2} holds.
\end{prop}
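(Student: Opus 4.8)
The plan is to prove this by exhibiting the standard unitary isomorphism between $L^2(X\times X,\mathbb C^{k\times k})$ and the space $\cB_2(L^2(X,\mathbb C^k))$ of Hilbert--Schmidt operators, made explicit through an orthonormal basis adapted to the vector/matrix structure. First I would dispose of the well-definedness of \eqref{cKdef2}: applying Tonelli's theorem to $(x,y)\mapsto \|\textsf K(x,y)\|^2$ (Frobenius norm), square-integrability of $\textsf K$ on $X\times X$ gives $\textsf K(x,\cdot)\in L^2(X,\mathbb C^{k\times k})$ for a.e.\ $x\in X$. For each such $x$, Cauchy--Schwarz yields $\int_X\|\textsf K(x,y)\boldsymbol\psi(y)\|\,dy\le\|\textsf K(x,\cdot)\|_{L^2}\|\boldsymbol\psi\|_{L^2}<\infty$, so $\textsf K(x,\cdot)\boldsymbol\psi(\cdot)\in L^1(X,\mathbb C^k)$ and $(\cK\boldsymbol\psi)(x)$ is defined a.e. Squaring this estimate, integrating in $x$, and invoking Tonelli once more gives $\|\cK\boldsymbol\psi\|_{L^2}^2\le\|\textsf K\|_{L^2(X\times X)}^2\|\boldsymbol\psi\|_{L^2}^2$, so $\cK$ is bounded with $\|\cK\|\le\|\textsf K\|_{L^2}$.

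Next, to identify $\cK$ as Hilbert--Schmidt with the claimed norm identity, fix an orthonormal basis $\{f_m\}_m$ of the scalar space $L^2(X,\mathbb C)$ and let $\{\mathbf e_\alpha\}_{\alpha=1}^k$ be the standard basis of $\mathbb C^k$. Then $\{f_m\mathbf e_\alpha\}_{m,\alpha}$ is an orthonormal basis of $L^2(X,\mathbb C^k)$, and $\{f_m(x)\overline{f_n(y)}\,\mathbf e_\alpha\mathbf e_\beta^{*}\}_{m,n,\alpha,\beta}$ is an orthonormal basis of $L^2(X\times X,\mathbb C^{k\times k})$, since the elementary matrices $\mathbf e_\alpha\mathbf e_\beta^{*}$ are orthonormal in $\mathbb C^{k\times k}$ under the Frobenius inner product. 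Expanding $\textsf K(x,y)=\sum c_{mn\alpha\beta}\,f_m(x)\overline{f_n(y)}\,\mathbf e_\alpha\mathbf e_\beta^{*}$ with $\sum|c_{mn\alpha\beta}|^2=\|\textsf K\|_{L^2}^2$, a direct computation of $\langle f_m\mathbf e_\alpha,\cK f_n\mathbf e_\beta\rangle$ against this expansion (interchanging the series with the two integrals, justified by $L^2$-convergence of the series and Cauchy--Schwarz) shows this matrix element equals $c_{mn\alpha\beta}$. Hence $\sum_{m,n,\alpha,\beta}|\langle f_m\mathbf e_\alpha,\cK f_n\mathbf e_\beta\rangle|^2=\|\textsf K\|_{L^2(X\times X)}^2<\infty$, which is precisely the statement that $\cK\in\cB_2$ and $\|\cK\|_{\cB_2(L^2(X,\mathbb C^k))}=\|\textsf K\|_{L^2(X\times X,\mathbb C^{k\times k})}$.

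For the converse, given $\cK\in\cB_2(L^2(X,\mathbb C^k))$, set $c_{mn\alpha\beta}:=\langle f_m\mathbf e_\alpha,\cK f_n\mathbf e_\beta\rangle$; these are square-summable by definition of the Hilbert--Schmidt class, so the series $\textsf K(x,y):=\sum c_{mn\alpha\beta}\,f_m(x)\overline{f_n(y)}\,\mathbf e_\alpha\mathbf e_\beta^{*}$ converges in $L^2(X\times X,\mathbb C^{k\times k})$. By the first part of the argument this $\textsf K$ induces a Hilbert--Schmidt operator whose matrix entries in the basis $\{f_m\mathbf e_\alpha\}$ agree with those of $\cK$, hence the induced operator is $\cK$. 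Uniqueness is then immediate: the map $\textsf K\mapsto\cK$ is linear, and by the norm identity it is isometric, hence injective, so two kernels inducing the same operator differ by an element of $L^2$-norm zero.

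The only genuinely delicate point is the bookkeeping for the tensor structure: choosing the bases so that the correspondence between kernel coefficients and operator matrix entries is transparent, and keeping the complex conjugations consistent with the convention (used throughout the paper) that $\langle\cdot,\cdot\rangle$ is conjugate-linear in the first slot, and that the inner product on $L^2(X\times X,\mathbb C^{k\times k})$ is $\langle \textsf A,\textsf B\rangle=\int_X\int_X\operatorname{Tr}\!\big(\textsf A(x,y)^{*}\textsf B(x,y)\big)\,dy\,dx$. Once those conventions are fixed, the remaining steps are routine applications of Tonelli's theorem, the Cauchy--Schwarz inequality, and the characterization of the Hilbert--Schmidt norm as the $\ell^2$ norm of the operator matrix in any orthonormal basis.
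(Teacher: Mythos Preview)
Your argument is correct and complete. Note, however, that the paper does not actually prove this proposition: it is quoted as a standard result from Birman--Solomjak, with no proof given. (There is a commented-out fragment in the source that sketches only the well-definedness and boundedness of $\cK$, via the same Tonelli/Cauchy--Schwarz argument you use in your first paragraph.) Your treatment of the Hilbert--Schmidt norm identity and the converse --- building an explicit unitary between $L^2(X\times X,\mathbb C^{k\times k})$ and $\cB_2(L^2(X,\mathbb C^k))$ through the tensor-product orthonormal basis $\{f_m\mathbf e_\alpha\}$ and $\{f_m\otimes\overline{f_n}\,\mathbf e_\alpha\mathbf e_\beta^{*}\}$ --- is the standard textbook route and goes well beyond what the paper supplies. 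The bookkeeping with the conjugate-linear-in-the-first-slot convention is handled correctly: one checks directly that $\langle f_m\mathbf e_\alpha,\cK f_n\mathbf e_\beta\rangle=\int_X\int_X\overline{f_m(x)}\,f_n(y)\,[\textsf K(x,y)]_{\alpha\beta}\,dy\,dx$ coincides with the Fourier coefficient $c_{mn\alpha\beta}$ of $\textsf K$ in the Frobenius inner product, exactly as you claim.
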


\begin{rem}
Since every trace class operator is Hilbert-Schmidt, every trace class operator
on $L^2(X, \mathbb C^k)$ has a matrix-valued kernel. 
\end{rem}

The integral formula we seek for the trace of  $\wedge^n\cK$ depends on
the following basic result, whose proof can be found for example in Simon~\cite{Simon}.
 
\begin{prop}\label{prop:TraceIntegralFormulaScalarFinite}
Suppose that $\mathcal{K} \in \cB_1(L^2([a,b],\mathbb{C}))$ is of the form
\begin{equation}
    (\mathcal{K}\phi)(x) = \int_a^b \emph{\textsf K}(x,y) \phi(y) dy,
\end{equation}
where the kernel $\emph{\textsf K}\in C^0([a,b]\times[a,b], \mathbb C)$ is continuous. Then \begin{equation}
    \operatorname{Tr}(\mathcal{K}) = \int_a^b \emph{\textsf K}(x,x)dx.
\end{equation}
\end{prop}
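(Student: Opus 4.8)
The plan is to reduce the continuous kernel to a sequence of step‑function approximants, for which the identity is elementary, and then to pass to the limit \emph{in trace norm}, so that the traces converge. Partition $[a,b]$ into $n$ subintervals $I_1,\dots,I_n$ of equal length $h=(b-a)/n$, let $e_i=|I_i|^{-1/2}\mathbf 1_{I_i}$, which form an orthonormal system in $L^2([a,b],\mathbb C)$, and let $\cP_n=\sum_{i=1}^n\langle e_i,\cdot\rangle e_i$ be the orthogonal projection onto the step functions subordinate to the partition. Since step functions are dense in $L^2$ and $\|\cP_n\|=1$, one has $\cP_n\to\cI$ strongly.

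First I would record the elementary computation that, $\cP_n\cK$ being trace class (product of a bounded and a trace class operator), its basis‑independent trace can be evaluated on an orthonormal basis extending $\{e_i\}$, and since $\cP_n e_i=e_i$,
\begin{equation*}
\Tr(\cP_n\cK)=\sum_{i=1}^n\langle e_i,\cK e_i\rangle=\sum_{i=1}^n\frac1{|I_i|}\int_{I_i}\int_{I_i}\textsf K(x,y)\,dy\,dx.
\end{equation*}
By uniform continuity of $\textsf K$ on the compact square $[a,b]^2$, writing $c_i$ for the midpoint of $I_i$, the right‑hand side differs from the Riemann sum $\sum_i|I_i|\,\textsf K(c_i,c_i)$ by at most $(b-a)\,\omega_{\textsf K}(\sqrt2\,h)$, where $\omega_{\textsf K}$ is the modulus of continuity of $\textsf K$. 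Since $x\mapsto\textsf K(x,x)$ is continuous, that Riemann sum converges to $\int_a^b\textsf K(x,x)\,dx$, so $\Tr(\cP_n\cK)\to\int_a^b\textsf K(x,x)\,dx$ as $n\to\infty$.

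It then remains to show $\Tr(\cP_n\cK)\to\Tr(\cK)$, for which I would prove $\cP_n\cK\to\cK$ in trace norm. Writing $\cK=\sum_j\mu_j\langle\phi_j,\cdot\rangle\psi_j$ as in \eqref{Kdecomp}, with $\sum_j\mu_j=\|\cK\|_1<\infty$ and $\{\phi_j\},\{\psi_j\}$ orthonormal, one has $\cP_n\cK-\cK=\sum_j\mu_j\langle\phi_j,\cdot\rangle(\cP_n-\cI)\psi_j$, and since a rank‑one operator $\langle u,\cdot\rangle v$ has trace norm $\|u\|\,\|v\|$,
\begin{equation*}
\|\cP_n\cK-\cK\|_1\le\sum_j\mu_j\,\|(\cP_n-\cI)\psi_j\|.
\end{equation*}
Each summand tends to $0$ because $\cP_n\psi_j\to\psi_j$, and is dominated by $2\mu_j$ with $\sum_j2\mu_j<\infty$; dominated convergence gives $\|\cP_n\cK-\cK\|_1\to0$, and since $|\Tr(\cdot)|\le\|\cdot\|_1$ we conclude $\Tr(\cP_n\cK)\to\Tr(\cK)$. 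Comparing the two limits yields $\Tr(\cK)=\int_a^b\textsf K(x,x)\,dx$.

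The step I expect to be the crux is the trace‑norm convergence $\cP_n\cK\to\cK$: continuity of $\textsf K$ alone only gives uniform — hence $L^2$, i.e.\ Hilbert–Schmidt — convergence of the step‑function approximants, which does not suffice to pass to the limit in the trace, and it is precisely the summability $\sum_j\mu_j<\infty$ (the trace class hypothesis) that upgrades this to trace‑norm convergence. A secondary point needing care is that the kernel of a Hilbert–Schmidt operator is defined only up to a.e.\ equivalence, so ``evaluating $\textsf K$ on the diagonal'' is a priori meaningless; the projection approach sidesteps this by working with averages of $\textsf K$ over small squares, which are insensitive to the choice of representative, and invokes the continuous representative only at the final Riemann‑sum step.
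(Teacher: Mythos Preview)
Your argument is correct. The paper does not actually supply its own proof of this proposition; it simply cites Simon~\cite{Simon} for the result and moves on. So there is no ``paper's proof'' to compare against directly.

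That said, your approach is very much in the spirit of what the paper does elsewhere. In the proof of Theorem~\ref{prop:TraceIntegralFormulaScalarFinite0} the authors use precisely the same device of projecting onto step functions subordinate to a dyadic partition, computing the trace of the projected operator as a Riemann sum, and passing to the limit; and in Proposition~\ref{prop:TraceIntegralFormulaScalarReal} they use a nested sequence of projections and the absolute summability of $\sum_\ell|\langle\psi_\ell,\cK\psi_\ell\rangle|$ to justify the limit in the trace. Your proof packages both ideas cleanly: the trace-norm convergence $\cP_n\cK\to\cK$ via the singular value decomposition and dominated convergence is exactly the right mechanism, and your remark that continuity of $\textsf K$ alone would only give Hilbert--Schmidt convergence (insufficient for the trace) correctly identifies where the trace class hypothesis enters. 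The computation $\Tr(\cP_n\cK)=\sum_i\langle e_i,\cK e_i\rangle$ is justified, as you note, by self-adjointness of $\cP_n$, which kills the contributions from the orthogonal complement.

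One small stylistic point: you could equally well use $\cP_n\cK\cP_n$ in place of $\cP_n\cK$, which is what the paper does in its later arguments; the trace is the same since $\Tr(\cP_n\cK\cP_n)=\Tr(\cP_n^2\cK)=\Tr(\cP_n\cK)$, but the two-sided version makes the finite-rank structure more transparent.
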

 
First, we extend Proposition~\ref{prop:TraceIntegralFormulaScalarFinite} to the case of an integral operator 
$\mathcal{K}$  on all of $\mathbb{R}$ instead of on a finite interval  $[a,b]$.

\begin{prop}\label{prop:TraceIntegralFormulaScalarReal}
Let $\mathcal{K} \in \cB_1(L^2(\mathbb{R},\mathbb{C}))$ be of the form
\begin{equation}
    (\mathcal{K}\phi)(x) = \int_{\mathbb{R}} \emph{\textsf K}(x,y) \phi(y) dy,
\end{equation}
where $\emph{\textsf K} = \emph{\textsf K}(x,y) \in L^2(\mathbb{R} \times \mathbb{R}, \mathbb{C}) \cap C^{\,0}(\mathbb{R} \times \mathbb{R},\mathbb{C}),$ and suppose that $F(x) := \emph{\textsf K}(x,x) \in L^1(\mathbb{R},\mathbb{C}).$ Then
\begin{equation}
    \operatorname{Tr}(\mathcal{K}) = \int_{\mathbb{R}} \emph{\textsf K}(x,x) dx.
\end{equation}
\end{prop}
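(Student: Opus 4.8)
The plan is to realize $\mathcal K$ as a trace-norm limit of its compressions to the finite intervals $[-R,R]$, apply Proposition~\ref{prop:TraceIntegralFormulaScalarFinite} on each such interval, and then let $R\to\infty$, using continuity of the trace with respect to the trace norm on the left and the hypothesis $F\in L^1(\mathbb R,\mathbb C)$ on the right.

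Concretely, for $R>0$ let $P_R$ be the orthogonal projection on $L^2(\mathbb R,\mathbb C)$ given by multiplication by the indicator function $\chi_{[-R,R]}$, and set $\mathcal K_R:=P_R\mathcal K P_R$. Since $P_R$ is bounded and $\mathcal K\in\cB_1$, the operator $\mathcal K_R$ is trace class, and it is the integral operator with kernel $\chi_{[-R,R]}(x)\,\textsf K(x,y)\,\chi_{[-R,R]}(y)$. Regarding the range of $P_R$ as $L^2([-R,R],\mathbb C)$ and noting that $\mathcal K_R$ annihilates its orthogonal complement, $\mathcal K_R$ defines a trace class operator on $L^2([-R,R],\mathbb C)$ with the same trace (compute the trace in an orthonormal basis adapted to the splitting $L^2(\mathbb R,\mathbb C)=\operatorname{ran}P_R\oplus\ker P_R$); its kernel is the restriction of $\textsf K$ to $[-R,R]\times[-R,R]$, which is continuous. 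Hence Proposition~\ref{prop:TraceIntegralFormulaScalarFinite} gives
\begin{equation*}
\operatorname{Tr}(\mathcal K_R)=\int_{-R}^{R}\textsf K(x,x)\,dx .
\end{equation*}
Now let $R\to\infty$ (it suffices to work along $R_n=n$). On the right, since $F(x)=\textsf K(x,x)\in L^1(\mathbb R,\mathbb C)$, dominated convergence yields $\int_{-R}^{R}\textsf K(x,x)\,dx\to\int_{\mathbb R}\textsf K(x,x)\,dx$. On the left, if $\|\mathcal K_R-\mathcal K\|_1\to 0$ then, since $|\operatorname{Tr}(\mathcal A)|\le\|\mathcal A\|_1$, we get $\operatorname{Tr}(\mathcal K_R)\to\operatorname{Tr}(\mathcal K)$, and combining the two limits proves the proposition.

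The main obstacle is therefore the trace-norm convergence $\|P_R\mathcal K P_R-\mathcal K\|_1\to 0$, which I would deduce from $\|P_R\|=1$ together with $P_R\to\mathcal I$ strongly as $R\to\infty$ (indeed $\|P_Rf-f\|^2=\int_{|x|>R}|f|^2\to 0$). First suppose $\mathcal K$ has finite rank, $\mathcal K=\sum_{j=1}^{m}\mu_j\langle\phi_j,\cdot\rangle\psi_j$ as in \eqref{Kdecomp}; then $\mathcal K_R=\sum_{j=1}^{m}\mu_j\langle P_R\phi_j,\cdot\rangle P_R\psi_j$, and adding and subtracting terms gives
\begin{equation*}
\|\mathcal K_R-\mathcal K\|_1\le\sum_{j=1}^{m}\mu_j\bigl(\|P_R\phi_j-\phi_j\|\,\|P_R\psi_j\|+\|\phi_j\|\,\|P_R\psi_j-\psi_j\|\bigr)\longrightarrow 0 .
\end{equation*}
For general trace class $\mathcal K$, truncating the series \eqref{Kdecomp} produces finite rank operators $\mathcal F$ with $\|\mathcal K-\mathcal F\|_1$ arbitrarily small; given $\varepsilon>0$ pick such an $\mathcal F$ with $\|\mathcal K-\mathcal F\|_1<\varepsilon$, and then
\begin{equation*}
\|\mathcal K_R-\mathcal K\|_1\le\|P_R(\mathcal K-\mathcal F)P_R\|_1+\|P_R\mathcal F P_R-\mathcal F\|_1+\|\mathcal F-\mathcal K\|_1\le 2\varepsilon+\|P_R\mathcal F P_R-\mathcal F\|_1 .
\end{equation*}
Letting $R\to\infty$ and using the finite rank case shows $\limsup_{R\to\infty}\|\mathcal K_R-\mathcal K\|_1\le 2\varepsilon$, and since $\varepsilon>0$ is arbitrary, $\|\mathcal K_R-\mathcal K\|_1\to 0$, as required.
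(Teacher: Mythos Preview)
Your proof is correct and follows the same overall strategy as the paper: compress $\mathcal K$ to $P_R\mathcal K P_R$, apply Proposition~\ref{prop:TraceIntegralFormulaScalarFinite} on $[-R,R]$, and pass to the limit using dominated convergence on the integral side. The one substantive difference lies in how you justify $\operatorname{Tr}(\mathcal K_R)\to\operatorname{Tr}(\mathcal K)$. The paper constructs a single orthonormal basis $\{\psi_m\}$ of $L^2(\mathbb R,\mathbb C)$ adapted to the filtration $L^2([-n,n],\mathbb C)\subset L^2([-n-1,n+1],\mathbb C)\subset\cdots$, so that $\operatorname{Tr}(\mathcal K_n)$ is a partial sum of the absolutely convergent series $\sum_m\langle\psi_m,\mathcal K\psi_m\rangle=\operatorname{Tr}(\mathcal K)$, and then controls the tail directly. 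You instead prove the stronger statement $\|\mathcal K_R-\mathcal K\|_1\to 0$ via finite-rank approximation and strong convergence $P_R\to\mathcal I$. Your route yields a reusable general fact (compressions by a strongly convergent net of contractions converge in $\cB_1$), while the paper's argument is slightly more self-contained since it only uses the definition~\eqref{eq:traceprod} of the trace and the absolute summability it entails.
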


\begin{proof}
Let $\mathcal{Q}_n: L^2(\mathbb{R},\mathbb{C}) \rightarrow L^2(\mathbb{R},\mathbb{C})$ be defined by 
\begin{equation}
    (\mathcal{Q}_n \phi)(x) = \bigchi_{[-n,n]}(x)\phi(x),
\end{equation}
where $\bigchi_{[-n,n]}$ is the characteristic function of $[-n,n] \subset \mathbb{R}.$ We can construct an orthonormal basis $\{\psi_m\}_{m=1}^{\infty}$ of $L^2(\mathbb{R},\mathbb{C})$
so that for every  $n,$ there is a subsequence $\{\psi_{m_k^{(n)}}\}_{k=1}^{\infty},$ for which
\begin{equation}
    \text{Span}\{ \psi_{m_ k^{(n)}} : k=1,2,\dots\} = \text{Ran}(\mathcal{Q}_n) = L^2([-n,n],\mathbb{C}),
\end{equation}
and
\begin{equation}
    m^{(n)} := \left\{m_k^{(n)}\right\}_{k=1}^{\infty} \subseteq \left\{m_k^{(n+1)}\right\}_{k=1}^{\infty} \subseteq \dots \subseteq \mathbb{N},
\end{equation}
with $\bigcup_{n=1}^{\infty} m^{(n)} = \mathbb{N}$. Now,
\begin{equation}
    \operatorname{Tr}(\mathcal{Q}_n\mathcal{K}\mathcal{Q}_n) = \sum_{\ell \in m^{(n)}} \langle \psi_{\ell}, \mathcal{K} \psi_{\ell} \rangle,
\end{equation}
as $\{\psi_{\ell} : \ell \in m^{(n)}\}$ is an orthonormal basis for 
$L^2([-n,n],\mathbb C)$.  By Proposition~\ref{prop:TraceIntegralFormulaScalarFinite},   we have that
\begin{equation}\label{eq:TrLimit}
\operatorname{Tr}(\mathcal{Q}_n \mathcal{K} \mathcal{Q}_n)
= \int_{\mathbb{R}} \bigchi_{[-n,n]}(x) {\textsf K}(x,x) dx. 
\end{equation}
Let $\epsilon > 0$. Since $\mathcal{K}$ is trace class, $\exists M \in \mathbb{N}$ such that 
  $  \sum_{\ell > M} |\langle \psi_{\ell}, \mathcal{K}\psi_{\ell} \rangle| < \epsilon$.
Since $\bigcup_{n=1}^{\infty} m^{(n)} = \mathbb{N}, \exists N$ such that $\{1,\dots,M\} \subseteq m^{(N)}.$ 
So, by \eqref{eq:traceprod},
\begin{equation}
    |\operatorname{Tr}(\mathcal{K}) - \operatorname{Tr}(\mathcal{Q}_N \mathcal{K} \mathcal{Q}_N)| \leq \sum_{\ell \notin m^{(N)}} |\langle \psi_{\ell}, \mathcal{K}\psi_{\ell} \rangle|
    \leq \sum_{\ell > M}|\langle \psi_{\ell}, \mathcal{K} \psi_{\ell} \rangle| < \epsilon.
\end{equation}
Therefore, by  \eqref{eq:TrLimit} and the  Lebesgue Dominated Convergence Theorem,
\begin{equation}
\operatorname{Tr}(\mathcal{K}) 
= \lim_{n \rightarrow \infty} \int_{\mathbb{R}} \bigchi_{[-n,n]}(x) {\textsf K}(x,x) dx
=\label{TrKintK} \int_{\mathbb{R}} {\textsf K}(x,x) dx.
\end{equation}
\end{proof}

Next, we further generalize  Propositions \ref{prop:TraceIntegralFormulaScalarFinite} and \ref{prop:TraceIntegralFormulaScalarReal} to the case of
an operator $\mathcal{K}$ with a matrix-valued kernel.

\begin{thm}\label{thm:MatrixKernelTrace}
Let $X = [a,b]$ or $X = \mathbb{R}.$ Suppose that
 $\mathcal{K} \in \cB_1(L^2(X,\mathbb{C}^k))$ is of the form
\begin{equation}
    (\mathcal{K}\boldsymbol{\phi})(x) = \int_{X}\emph{\textsf{K}}(x,y) \boldsymbol{\phi}(y) dy, 
    \qquad \boldsymbol{\phi} \in L^2(X,\mathbb C^k),
\end{equation}
where the kernel $\emph{\textsf{K}} \in  L^2(X \times X, \mathbb{C}^{k \times k}) \cap 
C^0(X \times X, \mathbb{C}^{k \times k})$
is continuous, and suppose that
$F(x) := \operatorname{Tr}(\emph{\textsf{K}})(x,x) \in L^1(X,\mathbb{C}).$ Then
\begin{equation}\label{trboldK}
    \operatorname{Tr}(\mathcal{K}) = \int_X \operatorname{Tr}(\emph{\textsf{K}})(x,x) dx.
\end{equation}
\end{thm}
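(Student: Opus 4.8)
The plan is to reduce the matrix–valued statement to the scalar–valued Propositions~\ref{prop:TraceIntegralFormulaScalarFinite} and~\ref{prop:TraceIntegralFormulaScalarReal} already in hand. Write $\textsf K=(K_{ij})$ for the entries of the kernel, and let $\mathcal K_{\mathrm{tr}}$ be the operator on $L^2(X,\mathbb C)$ whose kernel is the scalar function $G(x,y):=\operatorname{Tr}(\textsf K)(x,y)=\sum_{i=1}^k K_{ii}(x,y)$. If I can show that (i) $\mathcal K_{\mathrm{tr}}\in\cB_1(L^2(X,\mathbb C))$, and (ii) $\operatorname{Tr}(\mathcal K)=\operatorname{Tr}(\mathcal K_{\mathrm{tr}})$, then I am done: $G$ is continuous on $X\times X$ as a finite sum of the continuous $K_{ii}$, it lies in $L^2(X\times X,\mathbb C)$ (since $\textsf K\in L^2(X\times X,\mathbb C^{k\times k})$ forces each $K_{ij}\in L^2$), and $G(x,x)=\operatorname{Tr}(\textsf K)(x,x)=F(x)\in L^1(X)$ by hypothesis, so Proposition~\ref{prop:TraceIntegralFormulaScalarFinite} (if $X=[a,b]$) or Proposition~\ref{prop:TraceIntegralFormulaScalarReal} (if $X=\mathbb R$) gives $\operatorname{Tr}(\mathcal K_{\mathrm{tr}})=\int_X G(x,x)\,dx=\int_X\operatorname{Tr}(\textsf K)(x,x)\,dx$, which is \eqref{trboldK}.

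To construct $\mathcal K_{\mathrm{tr}}$ and establish (i), fix the standard basis $\{e_i\}_{i=1}^k$ of $\mathbb C^k$ and let $P_i\colon L^2(X,\mathbb C)\to L^2(X,\mathbb C^k)$ be the isometry $P_i\phi=\phi\,e_i$; its adjoint $P_i^*\boldsymbol\psi=\psi_i$ extracts the $i$-th component. A short computation with \eqref{cKdef2} shows that $\mathcal K_{ii}:=P_i^*\mathcal K P_i$ is the integral operator on $L^2(X,\mathbb C)$ with kernel $K_{ii}$. Since $\mathcal K\in\cB_1$ and $P_i,P_i^*$ are bounded, the two-sided ideal property of the trace class yields $\mathcal K_{ii}\in\cB_1$ with $\|\mathcal K_{ii}\|_1\le\|\mathcal K\|_1$. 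Hence $\mathcal K_{\mathrm{tr}}=\sum_{i=1}^k\mathcal K_{ii}$ is a finite sum of trace class operators; its kernel is $\sum_i K_{ii}=G$, proving (i).

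For (ii), use the orthonormal basis $\{\phi_m e_i:m\ge 1,\ 1\le i\le k\}$ of $L^2(X,\mathbb C^k)$ coming from an orthonormal basis $\{\phi_m\}$ of $L^2(X,\mathbb C)$. Since $\mathcal K$ is trace class, $\sum_{i,m}|\langle\phi_m e_i,\mathcal K\phi_m e_i\rangle|\le\|\mathcal K\|_1<\infty$, so the double series may be summed in any order; using $\langle\phi_m e_i,\mathcal K\phi_m e_i\rangle=\langle\phi_m,\mathcal K_{ii}\phi_m\rangle$ and that $\{\phi_m\}$ is a basis gives $\operatorname{Tr}(\mathcal K)=\sum_{i=1}^k\sum_m\langle\phi_m,\mathcal K_{ii}\phi_m\rangle=\sum_{i=1}^k\operatorname{Tr}(\mathcal K_{ii})=\operatorname{Tr}\!\big(\sum_{i=1}^k\mathcal K_{ii}\big)=\operatorname{Tr}(\mathcal K_{\mathrm{tr}})$, the penultimate equality by linearity of the trace over finitely many terms. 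Together with (i) and the scalar trace formula this completes the proof. The one step that needs genuine care is (i): continuity of $G$ by itself does not make $\mathcal K_{\mathrm{tr}}$ trace class, so it is essential first to transfer the trace-class property to the diagonal blocks $\mathcal K_{ii}$ via the ideal property before invoking the scalar propositions; the reordering in (ii) is routine once absolute convergence is noted from $|\operatorname{Tr}(\cdot)|\le\|\cdot\|_1$.
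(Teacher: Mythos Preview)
Your argument is correct and follows the same overall architecture as the paper: define the scalar operator with kernel $\operatorname{Tr}(\textsf K)$, show it is trace class, show $\operatorname{Tr}(\mathcal K)$ equals its trace via the product basis $\{\phi_m e_i\}$, and then invoke the scalar Propositions~\ref{prop:TraceIntegralFormulaScalarFinite} and~\ref{prop:TraceIntegralFormulaScalarReal}. The one place you differ is in establishing that $\mathcal K_{\mathrm{tr}}$ is trace class. The paper takes the singular value decomposition of $\mathcal K$ and reads off a nuclear representation of $\mathcal K^{\operatorname{Tr}}$ from the components of the singular vectors. Your route via the two-sided ideal property, writing $\mathcal K_{\mathrm{tr}}=\sum_i P_i^*\mathcal K P_i$ with bounded $P_i$, is cleaner: it avoids any manipulation of infinite series, makes the trace-class claim immediate, and yields the tidy bound $\|\mathcal K_{\mathrm{tr}}\|_1\le k\|\mathcal K\|_1$. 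The remaining steps (the basis computation for (ii) and the appeal to the scalar case) are essentially identical in both proofs.
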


\begin{proof}
We define
\begin{equation}\label{eq:defKtr}
    \left( \mathcal{K}^{\operatorname{Tr}} \phi \right)(x) := \int_X \operatorname{Tr}(\textsf{K})(x,y) \phi(y) dy,  \qquad {\phi} \in L^2(X,\mathbb C).
\end{equation}
We observe that $\mathcal{K}^{\operatorname{Tr}} \in \cB_2(L^2(X,\mathbb{C}))$,
since  $\operatorname{Tr}(\textsf{K}) \in L^2(X \times X, \mathbb{C})$
as ${\textsf{K}} \in  L^2(X \times X, \mathbb{C}^{k \times k})$.
By \eqref{KK1} and \eqref{eq:defKtr},
\begin{equation}
    \mathcal{K}^{\operatorname{Tr}} \phi (x) 
    = \int_X \sum_{\ell=1}^\infty \mu_\ell \boldsymbol{\phi}_\ell^*(y) \boldsymbol{\psi}_\ell (x) \phi(y) dy
   = \sum_{\ell=1}^\infty \sum_{j = 1}^k  \mu_\ell \langle \phi_{\ell j}, \phi \rangle \psi_{\ell j}(x), 
\end{equation}
where $\phi_{\ell j}$ is the $j$-th component of  $\boldsymbol{\phi}_\ell$.
Therefore, $\mathcal{K}^{\operatorname{Tr}} \in \cB_1(L^2(X,\mathbb C))$, since it
admits a representation as in  \eqref{Kdecomp}
 with an absolutely convergent series of singular values, $\mu_\ell$, each with 
 multiplicity $k$.

Let $\{\phi_m\}_{m=1}^\infty$ be an orthonormal basis for 
$L^2(X,\mathbb C)$ and let $\{ \mathbf e_j\}_{j=1}^k$
be the standard basis for $\mathbb C^k$. Since
$\boldsymbol{\phi}_{m,j}(x)  := \phi_m(x) \mathbf e_j$ is an orthonormal
basis for $L^2(X,\mathbb C^k)$, by \eqref{eq:traceprod}
\begin{align}
\operatorname{Tr}(\mathcal{K}) 
&\,\,=\,\, \sum_{m=1}^{\infty} \sum_{j=1}^k \langle \boldsymbol{\phi}_{m,j}, \mathcal{K} \boldsymbol{\phi}_{m,j} \rangle_{L^2(\mathbb {R},\mathbb{C}^k)}
\nonumber
\\
&\,\,=\,\, \sum_{m=1}^{\infty} \int_X \int_X \phi_m^*(x) \left[ \sum_{j=1}^k \mathbf{e}_j^T \textsf{K}(x,y) \mathbf{e}_j \right] \phi_m(y) dy \,dx
\nonumber
\\
&\,\,=\,\, \sum_{m=1}^{\infty} \int_X \int_X \phi^*_m(x) \operatorname{Tr}(\textsf{K})(x,y) \phi_m(y) dy\, dx
\nonumber
\\
&\,\,=\,\, \sum_{m=1}^{\infty} \langle \phi_m, \mathcal{K}^{\operatorname{Tr}} \phi_m \rangle_{L^2(X,\mathbb{C})}
\,\,=\,\, \operatorname{Tr}(\mathcal{K}^{\operatorname{Tr}}).
\end{align}
Since the assumptions of Propositions~\ref{prop:TraceIntegralFormulaScalarFinite} and \ref{prop:TraceIntegralFormulaScalarReal} hold for the
scalar-valued kernel $\operatorname{Tr}(\textsf{K})$, the result now follows.
\end{proof}

Finally, we derive a formula for the trace of the $n$-th wedge product of a trace class integral  operator $\mathcal{K}$ with a matrix-valued kernel
and hence for the 
regular Fredholm determinant  $\det_1(\mathcal{I} + \mathcal{K})$. 
For simplicity, 
in the case $X=\mathbb R$ we make the following hypothesis.

\begin{hypothesis}\label{hyp9}
The operator $\mathcal{K} \in \cB_2(L^2(\mathbb{R},\mathbb{C}^k))$ is of the form
\begin{equation}
    (\mathcal{K} \boldsymbol{\phi})(x) = \int_{\mathbb{R}} \textsf{K}(x,y) \boldsymbol{\phi}(y) dy,
    \quad\text{for }\boldsymbol{\phi} \in L^2(\mathbb{R},\mathbb{C}^k),
\end{equation}
where  $\textsf{K} \in L^2(\mathbb{R} \times \mathbb{R}, \mathbb{C}^{k \times k}) \cap C^0(\mathbb{R} \times \mathbb{R}, \mathbb{C}^{k \times k})$ has the property that 
 \begin{equation}\label{eq:HypExpDecay}
    \|\textsf{K}(x,y)\|_{\mathbb{C}^{k \times k}} \leq Ce^{-a |x|}e^{-b|y|}, \,\, \forall x, y \in \mathbb{R},
\end{equation}
for some $a,b,C > 0$.
\end{hypothesis}

\begin{thm}\label{prop:TraceIntegralFormulaScalarFinite0}
Let $\mathcal{K} \in \cB_1(L^2(\mathbb{R}, \mathbb{C}^k))$ satisfy Hypothesis \ref{hyp9}. For each $n \in \mathbb{N},$ let 
\begin{equation}\label{Kdetnbyn}
    {K}_{\mathbf{j}}^{(n)}(\mathbf{x}, \mathbf{y}) := \det \left( \left[ \emph{\textsf{K}}_{j_{\alpha}j_{\beta}}(x_{\alpha}, y_{\beta}) \right]_{\alpha, \beta = 1}^n \right),
\end{equation}
where $\mathbf{x} = (x_1, \dots, x_n), \, \mathbf{y} = (y_1, \dots, y_n),$ and 
\begin{equation}\nonumber
    \mathbf{j} \in J_k^{(n)} = \{ (j_1 \dots j_n) : 1 \leq j_{\alpha} \leq k, \,\, \forall \alpha \},
\end{equation}
is an index set of cardinality  $|J_k^{(n)}| = k^n.$ Then
\begin{equation}\label{prop:TraceIntegralFormulaScalarFinite0tr}
    \operatorname{Tr}({\wedge}^n \mathcal{K}) = \sum_{\mathbf{j} \in J_k^{(n)}} \frac{1}{n!} \int_{\mathbb{R}^n} {K}_{\mathbf{j}}^{(n)}(\mathbf{x}, \mathbf{x}) d\mathbf{x}.
\end{equation}
Consequently, 
\begin{equation}\label{prop:TraceIntegralFormulaScalarFinite0det}
    {\det}_1(\mathcal{I} + z\mathcal{K}) = \sum_{n=0}^{\infty} \frac{z^n}{n!} \sum_{\mathbf{j} \in J_k^{(n)}} \int_{\mathbb{R}^n} {K}_{\mathbf{j}}^{(n)}(\mathbf{x}, \mathbf{x}) d\mathbf{x}.
\end{equation}
\end{thm}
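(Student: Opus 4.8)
The plan is to compute $\Tr({\wedge}^n\mathcal{K})$ directly by applying Theorem~\ref{thm:MatrixKernelTrace} to the operator ${\wedge}^n\mathcal{K}$ on the Hilbert space ${\wedge}^n L^2(\mathbb{R},\mathbb{C}^k)$. The first step is to recognize ${\wedge}^n L^2(\mathbb{R},\mathbb{C}^k)$ as (a closed subspace of) an $L^2$-space of vector-valued functions on $\mathbb{R}^n$, so that ${\wedge}^n\mathcal{K}$ becomes an integral operator with an explicit matrix-valued kernel, and then check that this kernel satisfies the hypotheses of Theorem~\ref{thm:MatrixKernelTrace}. Concretely, using the identification $L^2(\mathbb{R},\mathbb{C}^k)\cong L^2(\mathbb{R})\otimes\mathbb{C}^k$ and $\bigotimes^n(L^2(\mathbb{R})\otimes\mathbb{C}^k)\cong L^2(\mathbb{R}^n)\otimes(\mathbb{C}^k)^{\otimes n}\cong L^2(\mathbb{R}^n,\mathbb{C}^{k^n})$, the operator $\mathcal{K}\otimes\cdots\otimes\mathcal{K}$ acts on $L^2(\mathbb{R}^n,\mathbb{C}^{k^n})$ as the integral operator with the $k^n\times k^n$ matrix-valued kernel whose $(\mathbf{i},\mathbf{j})$ block entry (indexing rows and columns of $(\mathbb{C}^k)^{\otimes n}$ by multi-indices $\mathbf{i},\mathbf{j}\in J_k^{(n)}$) at the point $(\mathbf{x},\mathbf{y})$ is $\prod_{\alpha=1}^n \textsf{K}_{i_\alpha j_\alpha}(x_\alpha,y_\alpha)$. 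Restricting to the antisymmetric subspace ${\wedge}^n L^2(\mathbb{R},\mathbb{C}^k)$ via the orthogonal projection onto totally antisymmetric tensors, one gets that ${\wedge}^n\mathcal{K}$ is the integral operator on the antisymmetric subspace with the antisymmetrized kernel; its scalar "trace of the kernel" at a diagonal point $(\mathbf{x},\mathbf{x})$ works out, after collecting the permutation sum from the antisymmetrizer, to $\frac{1}{n!}\sum_{\mathbf{j}\in J_k^{(n)}} K_{\mathbf{j}}^{(n)}(\mathbf{x},\mathbf{x})$ — the sign factors in \eqref{psiwedge} conspire to produce exactly the $n\times n$ determinant in \eqref{Kdetnbyn}.

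The second step is the verification that Theorem~\ref{thm:MatrixKernelTrace} applies to ${\wedge}^n\mathcal{K}$: we already know from the Proposition of Simon quoted in the excerpt that ${\wedge}^n\mathcal{K}$ is trace class since $\mathcal{K}$ is; the kernel of $\mathcal{K}\otimes\cdots\otimes\mathcal{K}$ on $\mathbb{R}^n\times\mathbb{R}^n$ is continuous and in $L^2$ because each $\textsf{K}_{i_\alpha j_\alpha}$ is, and the antisymmetrization preserves both continuity and square-integrability; and the exponential decay bound \eqref{eq:HypExpDecay} gives $\|\textsf{K}(x_\alpha,y_\alpha)\| \le Ce^{-a|x_\alpha|}e^{-b|y_\alpha|}$, so the diagonal of the kernel of ${\wedge}^n\mathcal{K}$ is dominated by a constant times $\prod_{\alpha=1}^n e^{-(a+b)|x_\alpha|}$, which is in $L^1(\mathbb{R}^n)$. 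Hence the "$F\in L^1$" hypothesis of Theorem~\ref{thm:MatrixKernelTrace} holds, and we may write $\Tr({\wedge}^n\mathcal{K}) = \int_{\mathbb{R}^n}\operatorname{Tr}(\text{kernel of }{\wedge}^n\mathcal{K})(\mathbf{x},\mathbf{x})\,d\mathbf{x}$, which by the computation of Step~1 equals $\sum_{\mathbf{j}\in J_k^{(n)}}\frac{1}{n!}\int_{\mathbb{R}^n}K_{\mathbf{j}}^{(n)}(\mathbf{x},\mathbf{x})\,d\mathbf{x}$, proving \eqref{prop:TraceIntegralFormulaScalarFinite0tr}.

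The third and final step is immediate: substitute \eqref{prop:TraceIntegralFormulaScalarFinite0tr} into the defining series ${\det}_1(\mathcal{I}+z\mathcal{K})=\sum_{n=0}^\infty z^n\Tr({\wedge}^n\mathcal{K})$ from the Proposition of Simon, noting that absolute and uniform convergence of this series is already guaranteed there by the bound $\|{\wedge}^n\mathcal{K}\|_1\le\frac{1}{n!}\|\mathcal{K}\|_1^n$, to obtain \eqref{prop:TraceIntegralFormulaScalarFinite0det}. (For $n=0$ the term is $1$, matching the convention ${\wedge}^0\mathcal{K}=\operatorname{id}_{\mathbb{C}}$ and the empty product.)

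The main obstacle I expect is Step~1: carrying out the tensor-product bookkeeping carefully enough to see that the antisymmetrizer turns the product kernel $\prod_\alpha\textsf{K}_{i_\alpha j_\alpha}(x_\alpha,y_\alpha)$ into the determinant \eqref{Kdetnbyn}, and in particular getting the combinatorial factor right — the $\frac{1}{\sqrt{n!}}$ normalizations in \eqref{psiwedge}, the $n!$ from summing a determinant's permutation expansion against the antisymmetrized diagonal, and the $\frac{1}{n!}$ in \eqref{prop:TraceIntegralFormulaScalarFinite0tr} must all be tracked consistently. A clean way to finesse this is to avoid the explicit projection altogether: pick the orthonormal basis of ${\wedge}^n L^2(\mathbb{R},\mathbb{C}^k)$ of the form $\boldsymbol\phi_{m_1,j_1}\wedge\cdots\wedge\boldsymbol\phi_{m_n,j_n}$ built from the product basis $\boldsymbol\phi_{m,j}=\phi_m\mathbf{e}_j$ used in the proof of Theorem~\ref{thm:MatrixKernelTrace}, compute $\langle \boldsymbol\phi_{m_1,j_1}\wedge\cdots\wedge\boldsymbol\phi_{m_n,j_n}, ({\wedge}^n\mathcal{K})(\boldsymbol\phi_{m_1,j_1}\wedge\cdots\wedge\boldsymbol\phi_{m_n,j_n})\rangle$ using the determinant identity \eqref{eq:detfromwedge}, and sum over strictly increasing multi-indices; expanding the resulting determinant of inner products $\langle\boldsymbol\phi_{m_\alpha,j_\alpha},\mathcal{K}\boldsymbol\phi_{m_\beta,j_\beta}\rangle$ as iterated integrals and recombining the sums over the $m$'s (via completeness, $\sum_m \phi_m(x)\overline{\phi_m(y)}$ acting as a reproducing object inside the integral, together with a dominated-convergence/Fubini argument justified by the exponential decay) collapses everything to the stated formula.
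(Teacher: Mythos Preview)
Your approach is correct in outline but takes a genuinely different route from the paper. The paper does \emph{not} apply Theorem~\ref{thm:MatrixKernelTrace} to ${\wedge}^n\mathcal{K}$. Instead, it first proves \eqref{prop:TraceIntegralFormulaScalarFinite0tr} on the unit interval $I=[0,1]$ by choosing a concrete step-function basis $\phi_m = 2^{N/2}\bigchi_{[(m-1)/2^N,\,m/2^N)}$, $\boldsymbol\phi_{m,j}=\phi_m\mathbf{e}_j$, using a separate lemma to rewrite $\Tr(\mathcal P_N({\wedge}^n\mathcal{K})\mathcal P_N)$ as $\frac{1}{n!}$ times a sum over the \emph{full} index set $\mathcal{M}_N^{(n)}\times J_k^{(n)}$, evaluating the inner products $\langle\boldsymbol\phi_{m_\alpha,j_\alpha},\mathcal{K}\boldsymbol\phi_{m_\beta,j_\beta}\rangle\cong 2^{-N}\textsf{K}_{j_\alpha j_\beta}(m_\alpha/2^N,m_\beta/2^N)$ via uniform continuity, and recognizing the result as a Riemann sum for $\int_{I^n}K_{\mathbf{j}}^{(n)}(\mathbf{x},\mathbf{x})\,d\mathbf{x}$. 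The extension to $\mathbb{R}$ then proceeds exactly as in Proposition~\ref{prop:TraceIntegralFormulaScalarReal}, using Hypothesis~\ref{hyp9} to ensure $K_{\mathbf{j}}^{(n)}(\mathbf{x},\mathbf{x})\in L^1(\mathbb{R}^n)$.

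Your route is cleaner conceptually but has two small gaps you should flag. First, Theorem~\ref{thm:MatrixKernelTrace} is stated only for $X=[a,b]$ or $X=\mathbb{R}$, not $X=\mathbb{R}^n$; you need the $n$-dimensional analogue, which is routine but not already in the paper. Second, ${\wedge}^n\mathcal{K}$ acts on the proper closed subspace ${\wedge}^n\sH\subset L^2(\mathbb{R}^n,\mathbb{C}^{k^n})$, so to invoke an integral trace formula you must pass to the extension $P_\wedge\,\mathcal{K}^{\otimes n}$ on the full space (its trace there equals $\Tr_{{\wedge}^n\sH}({\wedge}^n\mathcal{K})$ since $\mathcal{K}^{\otimes n}$ commutes with $P_\wedge$), and then check that the matrix trace of \emph{its} kernel on the diagonal gives $\frac{1}{n!}\sum_{\mathbf{j}}K_{\mathbf{j}}^{(n)}(\mathbf{x},\mathbf{x})$. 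The paper's step-function/Riemann-sum argument sidesteps both issues at the cost of being less structural; your approach, once these two points are handled, gives a more transparent explanation of \emph{why} the determinant \eqref{Kdetnbyn} appears.
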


\begin{rem}
The same result holds in the case of a finite interval, $X=[a,b]$, without the
need for \eqref{eq:HypExpDecay} in Hypothesis~\ref{hyp9}.
\end{rem}

\begin{rem}
    To unpack the definition of $K_{\mathbf{j}}^{(n)}(\mathbf{x},\mathbf{y})$ in (\ref{Kdetnbyn}), we observe that $[\textsf{K}_{j_{\alpha} j_{\beta}}(x_{\alpha},y_{\beta})]_{\alpha, \beta = 1}^n $ is the $n \times n$ matrix whose $(\alpha, \beta)$-entry, $\textsf{K}_{j_{\alpha} j_{\beta}}(x_{\alpha},y_{\beta}),$ is the $(j_{\alpha},j_{\beta})$-entry of the $k \times k$ matrix-valued kernel $\textsf K$ evaluated at the point $(x_{\alpha},y_{\beta}) \in \mathbb{R}^2.$ Here, $x_{\alpha}, y_{\beta}$ are the $\alpha$ and $\beta$ entries of the vectors $\mathbf{x}, \mathbf{y} \in \mathbb{R}^n.$ Therefore the formula for ${\det}_1(\mathcal{I} + z\mathcal{K})$ in (\ref{prop:TraceIntegralFormulaScalarFinite0det}) is  an infinite sum over an index $n \in \mathbb{N}$ and a multi-index $\mathbf{j} \in J^{(n)}_k$ of integrals over $\mathbb{R}^n$ of determinants of $n \times n$ matrices.  Needless to say, this formula cannot be used in numerical calculations! However, in Sections~\ref{truncsection} and \ref{quadsection} we will use it to  establish the
 convergence properties of a numerical approximation to 
 ${\det}_1(\mathcal{I} + z\mathcal{K})$. 
\end{rem}

\begin{proof}
First, we establish (\ref{prop:TraceIntegralFormulaScalarFinite0tr}) for $\operatorname{Tr}(\wedge^n \mathcal{K})$ over the finite interval $I=[0,1]$.
For each $N,$ let  $\mathcal B_N = \{ \boldsymbol\phi_{m,j}  = \phi_m \mathbf e_j: 1 \leq m \leq 2^N, \, 1 \leq j \leq k\}$
 be the orthonormal set in $L^2(I, \mathbb{C}^k)$ for which
\begin{equation}
    \phi_m(x) = 
    \begin{cases}
     2^{N/2} \quad & \text{if } \frac{m-1}{2^N} \leq x < \frac{m}{2^N},\\
    0 \quad &\text{otherwise},
    \end{cases}
\end{equation}
and let
\begin{equation}
    \boldsymbol{\Phi}_{\mathbf{m},\mathbf{j}} = \boldsymbol{\phi}_{m_1, j_1} \wedge \dots \wedge \boldsymbol{\phi}_{m_n, j_n}\,\,\in\,\,\wedge^n(L^2(I,\mathbb C^k)).
\end{equation}
Let
\begin{eqnarray}
\mathcal{M}_N^{(n)} &=& \{\mathbf{m} = (m_1 \dots m_n) : 1 \leq m_{\alpha} \leq 2^N, \,\, \forall \alpha\},\\
J_k^{(n)} &=& \{\mathbf{j} = (j_1 \dots j_n) : 1 \leq j_{\alpha} \leq k, \,\, \forall \alpha\}.
\end{eqnarray}
Employing the lexicographic ordering that $(m_1, j_1) < (m_2, j_2) $
if  $m_1 < m_2$ or  $m_1=m_2$ and $ j_1 < j_2$,
we define
\begin{equation}
    \mathcal{M}_N^{(n), \uparrow} = \{ (\mathbf{m}, \mathbf{j}) \in \mathcal{M}_N^{(n)} \times  J_k^{(n)}
  : (m_1, j_1) < (m_2, j_2) < \dots < (m_n, j_n) \}. 
\end{equation}
Then $\wedge^n\mathcal{B}_N := \{\boldsymbol{\Phi}_{\mathbf{m}, \mathbf{j}} : (\mathbf{m}, \mathbf{j}) \in \mathcal{M}_N^{(n), \uparrow}\}$ is an orthonormal set in $\wedge^n(L^2(I,\mathbb{C}^k))$. Let $\mathcal{P}_N$ be the projection onto $\text{Span}( \wedge^n\mathcal{B}_N)$.
Then, by Lemma \ref{lemmatracesum} below,
\begin{align}\label{37*}
    \operatorname{Tr}({\wedge}^n \mathcal{K}) &\,\,=\,\, 
    \lim_{N \rightarrow \infty} \operatorname{Tr}(\mathcal{P}_N\left({\wedge}^n \mathcal{K}\right)\mathcal{P}_N)  \nonumber \\
    &\,\,=\,\, \frac{1}{n!} \sum_{\mathbf{m} \in \mathcal{M}_N^{(n)}} \sum_{\mathbf{j} \in J_k^{(n)}} \langle \boldsymbol{\Phi}_{\mathbf{m}, \mathbf{j}}, \left( {\wedge}^n \mathcal{K} \right) \boldsymbol{\Phi}_{\mathbf{m}, \mathbf{j}} \rangle_{{\wedge}^n (L^2(I,\mathbb{C}^k))},
\end{align}
is a sum over the much larger index set  $\mathcal{M}_N^{(n)} \times  J_k^{(n)}$.

Let $\chi_{\alpha, \beta} = \operatorname{support}(\phi_{m_\alpha} \otimes
\phi_{m_\beta})$.  Then,
\begin{align} \nonumber 
  \langle \phi_{m_{\alpha}} \mathbf{e}_{j_{\alpha}}, \mathcal{K}\left( \phi_{m_{\beta}} \mathbf{e}_{j_{\beta}}\right) \rangle_{L^2(I,  \mathbb{C}^k)} 
    &= \langle \phi_{m_{\alpha}} \mathbf{e}_{j_{\alpha}}, \int_I \sum_{i=1}^k \textsf{K}_{ij_{\beta}}(\cdot,y) \phi_{m_{\beta}}(y) dy \,\,\mathbf{e}_{i} \rangle\\ \nonumber
    &= 2^N  \iint\limits_{\chi_{\alpha,\beta}} \sum_{i=1}^k (\mathbf{e}_{j_{\alpha}})^T \textsf{K}_{i j_{\beta}}(x,y)  \,\mathbf{e}_i\, dx dy\\  
    &    \cong \frac{1}{2^N} \textsf{K}_{j_{\alpha} j_{\beta}} \left(\frac{m_{\alpha}}{2^N}, \frac{m_{\beta}}{2^N}\right),
    \label{eq:littlesquareserror}
\end{align}
where the approximation is valid for large $N$ 
since the area of the support $\chi_{\alpha, \beta}$ of  $\phi_{m_\alpha} \otimes
\phi_{m_\beta}$ is 
 $2^{-2N}$, which is small for large $N$.
In particular, since $\textsf K$ is uniformly 
continuous on the compact set $I\times I$, the 
error in \eqref{eq:littlesquareserror} converges to zero as $N \to \infty$.

Next, by \eqref{37*}, \eqref{eq:littlesquareserror}, and \eqref{eq:detfromwedge}
we have that 
\begin{align}
\operatorname{Tr}\left( \mathcal{P}_N \left({\wedge}^n \mathcal{K} \right)\mathcal{P}_N \right) &\cong \frac{1}{n!} \sum_{\mathbf{m} \in \mathcal{M}_N^{(n)}} \sum_{\mathbf{j} \in J_k^{(n)}} \det\left[ \frac{1}{2^N} \textsf{K}_{j_{\alpha}j_{\beta}} \left( \frac{m_{\alpha}}{2^N}, \frac{m_{\beta}}{2^N}\right)\right]_{\alpha, \beta = 1}^n
\nonumber
\\
&=
\label{Riemannsum} \frac{1}{n!} \sum_{\mathbf{m} \in \mathcal{M}_N^{(n)}} \sum_{\mathbf{j} \in J_k^{(n)}} \left(\frac{1}{2^N}\right)^n K_{\mathbf{j}}^{(n)}\left( \frac{m_{\alpha}}{2^N}, \frac{m_{\beta}}{2^N}\right)
\\ \label{convsum}
&\cong \frac{1}{n!} \sum_{\mathbf{j}\in J_k^{(n)}} \int_{I^n} K_{\mathbf j}^{(n)}(\mathbf{x}, \mathbf{x}) d\mathbf{x},
\end{align}
where the error in (\ref{convsum}) converges to $0$ as $N\to\infty$ since (\ref{Riemannsum}) is a Riemann sum for the continuous function $F^{(n)}(x) := {K}_{\mathbf j}^{(n)}(\mathbf{x}, \mathbf{x}).$ This establishes the result in the case that $\mathcal{K} \in \cB_1(L^2(I,\mathbb{C}^k)),$ where $I$ is a finite interval. 

To prove (\ref{prop:TraceIntegralFormulaScalarFinite0det}) for operators on $\mathbb{R}$, we note that by  Hypothesis \ref{hyp9}, $F^{(n)}(\mathbf{x}) := K_{\mathbf j}^{(n)}(\mathbf{x}, \mathbf{x}) \in L^1(\mathbb{R}^n, \mathbb{C})$. 
Finally, arguing as in the proof of 
Proposition~\ref{prop:TraceIntegralFormulaScalarReal}, we obtain \eqref{prop:TraceIntegralFormulaScalarFinite0tr} and \eqref{prop:TraceIntegralFormulaScalarFinite0det}. 
\end{proof}

In the proof of Theorem \ref{prop:TraceIntegralFormulaScalarFinite0} above, we made use of the following lemma.

\begin{lem}\label{lemmatracesum}
Let $\mathcal{K}$ be a linear operator on a separable Hilbert space $\sH.$ For each  $N \in \mathbb{N}$, let $\mathcal{B}_N = \{\phi_{N,m}\}_{m=1}^{m_N}$ 
be an orthonormal set in $\sH$ chosen so that $\cup_{N=1}^{\infty} \mathcal{B}_N$ is an orthonormal basis for $\sH.$
Let 
\begin{equation}
    \Phi_{N,\mathbf{m}} = \phi_{N,m_1} \wedge \dots \wedge \phi_{N,m_n} \,\, \in \,\, \wedge^n\emph{\textsf H},
\end{equation}
where $\mathbf{m} = (m_1, \dots, m_n),$ and let
\begin{equation}
\mathcal{M}_N^{(n),\uparrow} = \{\mathbf{m} : 1 \leq m_1 < \dots < m_n \leq N\}.
\end{equation}
Then \begin{equation}
    {\wedge}^n \mathcal{B}_N := \{\Phi_{N,\mathbf{m}}\}_{\mathbf{m} \in \mathcal{M}_N^{(n), \uparrow}}
\end{equation}
is an orthonormal set in ${\wedge}^n \sH. $ 
Furthermore, if $\mathcal{P}_N$ is the projection onto \emph{Span}$(\wedge^n \mathcal{B}_N)$, then 
\begin{equation}\label{trPN}
    \operatorname{Tr}(\mathcal{P}_N ({\wedge}^n \mathcal{K}) \mathcal{P}_N) =  \frac{1}{n!} \sum_{\mathbf{m} \in \mathcal{M}^{(n)}_N} \langle \Phi_{N, \mathbf{m}}, ({\wedge}^n \mathcal{K}) \Phi_{N, \mathbf{m}} \rangle_{\wedge^n\emph{\textsf H}},
\end{equation}
is a sum over the much larger index set 
\begin{equation}
    \mathcal{M}_N^{(n)} = \{ \mathbf{m} = (m_1 \dots m_n) : 1 \leq m_{\alpha} \leq N, \forall \alpha \}.
\end{equation}
\end{lem}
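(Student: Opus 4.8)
The plan is to prove Lemma~\ref{lemmatracesum} in two parts: first the orthonormality claim, then the trace identity \eqref{trPN}. For the orthonormality of $\wedge^n\mathcal{B}_N$, I would apply the inner product identity \eqref{eq:detfromwedge}: for $\mathbf{m}, \mathbf{m}' \in \mathcal{M}_N^{(n),\uparrow}$, the inner product $\langle \Phi_{N,\mathbf{m}}, \Phi_{N,\mathbf{m}'}\rangle$ equals $\det\big((\langle \phi_{N,m_\alpha}, \phi_{N,m'_\beta}\rangle)_{\alpha,\beta}\big)$. Since $\mathcal{B}_N$ is orthonormal, this matrix has entries in $\{0,1\}$, and because both index tuples are strictly increasing, the determinant is $1$ when $\mathbf{m} = \mathbf{m}'$ and $0$ otherwise (a strictly increasing reindexing is unique, so a mismatch forces a zero row). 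This is the routine part.

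For the trace identity, the idea is that $\wedge^n\mathcal{B}_N$ is an orthonormal basis for the range of $\mathcal{P}_N$, so by the definition of trace \eqref{eq:traceprod} applied on the Hilbert space $\operatorname{Ran}(\mathcal{P}_N)$,
\begin{equation}\nonumber
\operatorname{Tr}(\mathcal{P}_N ({\wedge}^n\mathcal{K})\mathcal{P}_N) = \sum_{\mathbf{m} \in \mathcal{M}_N^{(n),\uparrow}} \langle \Phi_{N,\mathbf{m}}, ({\wedge}^n\mathcal{K})\Phi_{N,\mathbf{m}}\rangle.
\end{equation}
The content of the lemma is then to rewrite the sum over the ordered index set $\mathcal{M}_N^{(n),\uparrow}$ as $\frac{1}{n!}$ times the sum over the full product index set $\mathcal{M}_N^{(n)}$. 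The key observation is that the summand $\langle \Phi_{N,\mathbf{m}}, ({\wedge}^n\mathcal{K})\Phi_{N,\mathbf{m}}\rangle$ is invariant under permutations of the entries of $\mathbf{m}$: permuting the factors in a wedge product $\phi_{N,m_1}\wedge\cdots\wedge\phi_{N,m_n}$ multiplies it by the sign of the permutation, and since that sign appears twice (once in each slot of the inner product, and $\wedge^n\mathcal{K}$ commutes with the permutation action) the two signs cancel. Moreover the summand vanishes whenever $\mathbf{m}$ has a repeated entry, since then $\Phi_{N,\mathbf{m}} = 0$. Hence each strictly increasing tuple corresponds to exactly $n!$ tuples in $\mathcal{M}_N^{(n)}$ with the same summand value, and all remaining tuples in $\mathcal{M}_N^{(n)}$ contribute zero; this gives the factor $\frac{1}{n!}$.

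The main obstacle — really the only place requiring care rather than bookkeeping — is justifying the permutation-invariance of the summand cleanly, i.e.\ verifying that $\wedge^n\mathcal{K}$ intertwines the natural $\sigma_n$-action on $\bigotimes^n\sH$ (which it does because it is the restriction of $\mathcal{K}\otimes\cdots\otimes\mathcal{K}$, which manifestly commutes with coordinate permutations by \eqref{Kcrossphi}), so that both $\Phi_{N,\mathbf{m}}$ and $(\wedge^n\mathcal{K})\Phi_{N,\mathbf{m}}$ pick up the same sign $(-1)^\pi$ under reindexing $\mathbf{m} \mapsto \pi\cdot\mathbf{m}$, leaving the inner product unchanged. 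One should also note that $\mathcal{P}_N(\wedge^n\mathcal{K})\mathcal{P}_N$ is trace class (indeed finite rank, since $\operatorname{Ran}(\mathcal{P}_N)$ is finite-dimensional) so all the sums are finite and the manipulations are unconditionally valid; no convergence issues arise at this stage.
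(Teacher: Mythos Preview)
Your proposal is correct and follows essentially the same approach as the paper: both start from the trace formula over the orthonormal set $\wedge^n\mathcal{B}_N$, observe that $\Phi_{N,\mathbf{m}}=0$ whenever $\mathbf{m}$ has a repeated entry, and use $\Phi_{N,\pi(\mathbf{m})}=(-1)^\pi\Phi_{N,\mathbf{m}}$ to show that each strictly increasing tuple is counted $n!$ times in the full sum. Your write-up is slightly more detailed (you explicitly address orthonormality via \eqref{eq:detfromwedge}, the commutation of $\wedge^n\mathcal{K}$ with permutations, and the finite-rank/trace-class point), but the argument is the same.
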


\begin{proof} 
By the definition of the trace, 
\begin{equation}\label{trphiequality}
    \operatorname{Tr}(\mathcal{P}_N \mathcal{K} \mathcal{P}_N) = \!\!\! \sum_{\mathbf{m} \in \mathcal{M}_N^{(n), \uparrow}} \langle \Phi_{N,\mathbf{m}}, \wedge^n( \mathcal{K}) \Phi_{N,\mathbf{m}} \rangle_{\wedge^n\textsf{H}}.
\end{equation}
Let 
 \begin{equation}
     \mathcal{M}_N^{(n),0} = \{\mathbf{m} \in \mathcal{M}_N^{(n)} : \exists k \neq \ell : m_k = m_{\ell}\}.
 \end{equation}
Since $\Phi_{\mathbf{m}} = 0$ for all $\mathbf{m} \in\mathcal{M}_N^{(n),0} $ and
 \begin{equation}
     \mathcal{M}_N^{(n)} \setminus  \mathcal{M}_N^{(n),0} = \{\pi(\mathbf{m}) : \mathbf{m} \in \mathcal{M}_N^{(n), \uparrow}, \pi \in \sigma(n)\},
 \end{equation}
 we find that
 \begin{align}
     \sum_{\mathbf{m} \in \mathcal{M}_N^{(n)}} \langle \Phi_{N,\mathbf{m}}, {\wedge}^n (\mathcal{K}) \Phi_{N,\mathbf{m}} \rangle 
     &\,\,=\,\, \sum_{\mathbf{m} \in \mathcal{M}_N^{(n)} \setminus \mathcal{M}_N^{(n),0}} \langle \Phi_{N,\mathbf{m}}, {\wedge}^n (\mathcal{K}) \Phi_{N,\mathbf{m}} \rangle
     \nonumber
     \\
     &\,\,=\,\, \sum_{\mathbf{m} \in \mathcal{M}_N^{(n), \uparrow}} \, \sum_{\pi \in \sigma(n)} \langle \Phi_{N, \pi(\mathbf{m})}, {\wedge}^n (\mathcal{K}) \Phi_{N,\pi(\mathbf{m})} \rangle
    \nonumber
     \\
&\,\,=\,\, n! \sum_{\mathbf{m} \in \mathcal{M}_N^{(n),\uparrow}} \langle \Phi_{N, \mathbf{m}}, {\wedge}^n (\mathcal{K}) \Phi_{N, \mathbf{m}} \rangle,
 \end{align}
 as
$     \Phi_{ \pi(\mathbf{m})} = (-1)^{\pi} \Phi_{ \mathbf{m}}$.
  \end{proof} 
 
We conclude this section by generalizing a formula in Simon~\cite{Simon} for the 
$2$-modified Fredholm determinant of  a Hilbert-Schmidt operator, 
$\mathcal{K} \in \cB_2(L^2([a,b],\mathbb{C}))$, to the case of an operator, $\mathcal{K} \in \cB_2(L^2(\mathbb{R},\mathbb{C}^k))$. 
A similar result holds for operators,  
$\mathcal{K} \in \cB_2(L^2([a,b],\mathbb{C}^k))$, on a compact interval
without the need for \eqref{eq:HypExpDecay} in Hypothesis~\ref{hyp9}.

\begin{thm}\label{finaldet2thm}
Let $\mathcal{K} \in \cB_2(L^2(\mathbb{R}, \mathbb{C}^{k}))$ satisfy Hypothesis \ref{hyp9} and let
\begin{equation}\label{Ktildejn}
    \widetilde{K}_{\mathbf{j}}^{(n)}(\mathbf{x},\mathbf{y}) := \det\left[ \emph{\textsf{K}}_{j_{\alpha} j_{\beta}}(x_{\alpha}, y_{\beta}) [1 - \delta_{\alpha \beta}] \right]_{\alpha, \beta = 1}^{n},
\end{equation}
where $\delta_{\alpha \beta}$ is the Kronecker delta, and 
\begin{equation}\label{jinJ}
    \mathbf{j} \in J_k^{(n)} = \{ (j_1 \dots j_n) : 1 \leq j_{\alpha} \leq k, \,\, \forall \alpha \}.
\end{equation}
Then \begin{equation}\label{det2RCk}
    {\det}_2(\mathcal{I} + z\mathcal{K}) = \sum_{n=0}^{\infty} \frac{z^n}{n!} \sum_{\mathbf{j} \in J_k^{(n)}} \int_{\mathbb{R}^n} \widetilde{K}_{\mathbf{j}}^{(n)}(\mathbf{x}, \mathbf{x}) d\mathbf{x}.
\end{equation}
\end{thm}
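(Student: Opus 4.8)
The plan is to prove the formula first under the extra assumption that $\mathcal{K}$ is trace class, where everything reduces to a combinatorial identity together with the results already in hand, and then to remove that assumption by a mollification argument that keeps the kernel structure of Hypothesis~\ref{hyp9} intact.

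Assume first $\mathcal{K}\in\cB_1(L^2(\mathbb{R},\mathbb{C}^k))$. Then Proposition~\ref{prop:det2}(4) gives $\det_2(\mathcal{I}+z\mathcal{K})=e^{-z\operatorname{Tr}(\mathcal{K})}\det_1(\mathcal{I}+z\mathcal{K})$, Theorem~\ref{prop:TraceIntegralFormulaScalarFinite0} gives the series for $\det_1(\mathcal{I}+z\mathcal{K})$, and Theorem~\ref{thm:MatrixKernelTrace} gives $\operatorname{Tr}(\mathcal{K})=\sum_{j=1}^{k}\int_{\mathbb{R}}\textsf{K}_{jj}(x,x)\,dx$. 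So it is enough to establish, for each $n$, the identity
\[
\sum_{\mathbf{j}\in J_k^{(n)}}\int_{\mathbb{R}^n}K_{\mathbf{j}}^{(n)}(\mathbf{x},\mathbf{x})\,d\mathbf{x}\;=\;\sum_{r=0}^{n}\binom{n}{r}\big(\operatorname{Tr}\mathcal{K}\big)^{r}\sum_{\mathbf{j}\in J_k^{(n-r)}}\int_{\mathbb{R}^{n-r}}\widetilde{K}_{\mathbf{j}}^{(n-r)}(\mathbf{x},\mathbf{x})\,d\mathbf{x},
\]
since multiplying by $z^n/n!$, summing over $n$, and using $\binom{n}{r}/n!=1/(r!(n-r)!)$ factors the left-hand side as $e^{z\operatorname{Tr}\mathcal{K}}$ times the right-hand side of~\eqref{det2RCk}, and the Cauchy product is legitimate because both series converge absolutely. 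To prove the identity, expand $K_{\mathbf{j}}^{(n)}(\mathbf{x},\mathbf{x})=\sum_{\sigma\in\sigma_n}(-1)^{\sigma}\prod_{\alpha=1}^{n}\textsf{K}_{j_\alpha j_{\sigma(\alpha)}}(x_\alpha,x_{\sigma(\alpha)})$ and group the permutations $\sigma$ by their fixed-point set $F=\operatorname{Fix}(\sigma)$. Since $\sigma$ preserves $\{1,\dots,n\}\setminus F$ and acts there as a derangement, and since the indices and variables attached to $F$ decouple from the rest, summing over $\mathbf{j}$ and integrating over $\mathbf{x}$ — legitimate by Fubini, as~\eqref{eq:HypExpDecay} makes the integrand absolutely integrable — yields a factor $(\operatorname{Tr}\mathcal{K})^{|F|}$ from the fixed points (using Theorem~\ref{thm:MatrixKernelTrace}) and the quantity $\widetilde{K}_{\mathbf{j}'}^{(n-|F|)}(\mathbf{x}',\mathbf{x}')$ summed and integrated over the complementary block, the restriction of the permutation sum to derangements being exactly what the factor $1-\delta_{\alpha\beta}$ in~\eqref{Ktildejn} encodes. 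Because $(-1)^{\sigma}$ equals the sign of the restriction of $\sigma$ to $\{1,\dots,n\}\setminus F$ and there are $\binom{n}{r}$ fixed-point sets of size $r$, the identity follows, and hence so does~\eqref{det2RCk} for trace class $\mathcal{K}$.

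To treat a general $\mathcal{K}\in\cB_2(L^2(\mathbb{R},\mathbb{C}^k))$ satisfying Hypothesis~\ref{hyp9}, fix a standard mollifier $\rho_\epsilon$ supported in $[-\epsilon,\epsilon]$ and set $\textsf{K}_\epsilon(x,y)=\iint\rho_\epsilon(x-x')\rho_\epsilon(y-y')\textsf{K}(x',y')\,dx'\,dy'$. Then $\textsf{K}_\epsilon$ is smooth, and $\textsf{K}_\epsilon$ together with all its partial derivatives satisfies a bound of the same shape as~\eqref{eq:HypExpDecay} (for the kernel itself $\|\textsf{K}_\epsilon(x,y)\|\le Ce^{(a+b)\epsilon}e^{-a|x|}e^{-b|y|}$), so $\mathcal{K}_\epsilon$ is trace class by the criterion of~\cite{zweck2024regularity} and the previous paragraph applies to it. Moreover $\|\mathcal{K}_\epsilon-\mathcal{K}\|_{\cB_2}=\|\textsf{K}_\epsilon-\textsf{K}\|_{L^2(\mathbb{R}^2)}\to0$, so $\det_2(\mathcal{I}+z\mathcal{K}_\epsilon)\to\det_2(\mathcal{I}+z\mathcal{K})$ by Proposition~\ref{prop:det2}(3). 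It remains to show that the right-hand side of~\eqref{det2RCk} formed from $\textsf{K}_\epsilon$ converges to that formed from $\textsf{K}$: for each fixed $n$ and $\mathbf{j}$ this is the dominated convergence theorem on $\mathbb{R}^n$ (the integrand is a fixed polynomial in the entries of $\textsf{K}_\epsilon$, which converge pointwise to those of $\textsf{K}$ and are uniformly dominated using the $\epsilon$-independent bound $Ce^{a+b}e^{-a|\cdot|}e^{-b|\cdot|}$), while the interchange of $\lim_{\epsilon\to0}$ with $\sum_{n}$ is justified by a Weierstrass majorant uniform in $\epsilon\in(0,1]$: applying Hadamard's inequality to the columns of the matrix in~\eqref{Ktildejn} at $\mathbf{y}=\mathbf{x}$ gives $|\widetilde{K}_{\mathbf{j}}^{(n)}(\mathbf{x},\mathbf{x})|\le(Ce^{a+b})^{n}n^{n/2}\prod_{\beta=1}^{n}e^{-b|x_\beta|}$ (for $\textsf{K}$ and for every $\textsf{K}_\epsilon$), so $\sum_{n}\tfrac{|z|^n}{n!}k^n(Ce^{a+b})^n n^{n/2}(2/b)^n<\infty$.

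The combinatorial identity is essentially bookkeeping once the fixed-point grouping is in place, so the step that needs the most care is the mollification argument — specifically, verifying that $\textsf{K}_\epsilon$ retains the regularity and exponential decay of its derivatives needed to invoke the trace class criterion of~\cite{zweck2024regularity}, and producing the $\epsilon$-uniform Hadamard bound that controls the tail of the series in~\eqref{det2RCk}. (For a finite interval $X=[a,b]$ the same scheme works, mollifying the zero extension of $\textsf{K}$ and restricting, and~\eqref{eq:HypExpDecay} is not needed.)
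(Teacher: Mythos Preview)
Your proof is correct and differs from the paper's in both main steps. For the trace class case, the paper interpolates: it inserts a factor $(1-\lambda\delta_{\alpha\beta})$ into the determinant to define a function $F(\lambda)$, shows $F'(\lambda)=-\operatorname{Tr}(\mathcal{K})F(\lambda)$ via cofactor expansion, and reads off $F(1)=e^{-\operatorname{Tr}(\mathcal{K})}F(0)$; you instead expand the determinant over permutations, group by fixed-point sets, and extract the same exponential factor combinatorially. Your route is more explicit and makes the role of the fixed points---exactly what the factor $(1-\delta_{\alpha\beta})$ kills---transparent, while the paper's ODE trick is slicker but less revealing. For the reduction from $\cB_2$ to trace class, the paper simply invokes density of finite rank operators in $\cB_2$ together with continuity of $\det_2$, without saying why the right-hand side of~\eqref{det2RCk} converges along such an approximation; your mollification is more careful here, since it produces approximants $\mathcal{K}_\epsilon$ whose kernels visibly satisfy Hypothesis~\ref{hyp9} with $\epsilon$-uniform constants, so that both sides can be passed to the limit with an explicit Hadamard majorant. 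The cost is a dependence on the external trace-class criterion of~\cite{zweck2024regularity}, which the paper's density shortcut avoids.
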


\begin{proof}
    Since Hilbert-Schmidt operators are compact, $\mathcal{K}$ is the limit 
    in the Hilbert-Schmidt norm of finite rank operators which, in particular, are trace class.  
 So by the continuity of $\det_2$ given in (3) of Proposition~\ref{prop:det2}, it suffices to prove the result when $\cK$ is also trace class.
For each $n\in \mathbb N$ and  $ \mathbf{j} \in J^{(n)}_k$,  let 
\begin{equation}\label{defalpha}
    \alpha^{(n)}_{\mathbf{j}}(\lambda) := \int_{\mathbb{R}^n} {K}^{(n)}_{\mathbf{j}}(\mathbf{x},\mathbf{x}; \lambda) d\mathbf{x},
\end{equation}
where
\begin{equation}
    K^{(n)}_{\mathbf{j}}(\mathbf{x}, \mathbf{x}; \lambda) = \det\left[ \textsf{K}_{j_{\alpha} j_{\beta}}(x_{\alpha}, x_{\beta})(1 - \lambda \delta_{\alpha \beta}) \right]_{\alpha, \beta = 1}^n.
\end{equation}
Let $F : \mathbb{C} \rightarrow \mathbb{C} $ be defined by
\begin{equation}\label{eq:Fdef}
    F(\lambda) = \sum_{n=0}^{\infty} \,\,\sum_{\mathbf{j} \in J^{(n)}_k} \frac{\alpha^{(n)}_{\mathbf{j}}(\lambda)}{n!}.
\end{equation}
By (4) of Proposition~\ref{prop:det2}, it suffices to show that
   $ F(\lambda) = F(0) e^{-\lambda \operatorname{Tr}(\mathcal{K})}$,
or equivalently that
    $ F'(\lambda) = -\operatorname{Tr}(\mathcal{K}) F(\lambda)$.
 
 Let  $\mathbf{B}_{\mathbf{j}}$ be  the  $n \times n$ matrix defined by
 \begin{equation}\label{defB}
     [\mathbf{B}_{\mathbf{j}}(\lambda)]_{\alpha \beta} = 
     b_{\alpha\beta} := [\textsf{K}(x_{\alpha}, x_{\beta})]_{j_{\alpha} j_{\beta}}[1 - \lambda \delta_{\alpha \beta}].     
     \end{equation}
 Then by  the chain rule and the cofactor expansion for the determinant, 
 \begin{align}
     \frac{d}{d\lambda}({\det} \circ  \mathbf{B}_{\mathbf{j}})(\lambda) 
     &\,\,=\,\, \sum_{\alpha,\beta=1}^{n} \frac{\partial(\det \mathbf{B}_{\mathbf{j}})}{\partial b_{\alpha \beta}} \frac{\partial b_{\alpha \beta}}{\partial \lambda}
     \nonumber
     \\
&\,\,=\,\,     
       -\sum_{\alpha=1}^n \det\mathbf{B}_{\mathbf{j}}^{(\alpha)}\textsf{K}_{j_{\alpha}j_{\alpha}}(x_{\alpha},x_{\alpha}),
       \label{eq:lambdaderB}
 \end{align}
where $\mathbf{B}_{\mathbf{j}}^{(\alpha)}$ 
is the $(n-1) \times (n-1)$ matrix obtained by removing row $\alpha$ and column $\alpha$ from $\mathbf{B}_{\mathbf{j}}$. 
Next, we observe that for each $\alpha$, 
\begin{equation}\label{eq:detminorB}
\det \mathbf{B}_{\mathbf{j}}^{(\alpha)}
= K_{\mathbf{j}}^{(n-1)}(\mathbf{\widehat{x}}, \mathbf{\widehat{x}}; \lambda),
\quad\widehat{\mathbf x} = (x_1,\dots, x_{\alpha-1}, x_{\alpha+1}, x_n)\in \mathbb R^{n-1}.
\end{equation}
Since $\mathbf{B}_{\mathbf{j}}^{(\alpha)}$ does not include $x_{\alpha},$ and $\textsf{K}(x_{\alpha}, x_{\alpha})$ only involves $x_{\alpha}$, 
by \eqref{defalpha}, \eqref{defB}, \eqref{eq:lambdaderB}, and \eqref{eq:detminorB},
\begin{equation}\label{eq:finaldet2thmproof2} 
 \frac{\partial \alpha^{(n)}_{\mathbf{j}}}{\partial \lambda}(\lambda) = - \int_{\mathbb{R}^{n-1}} K_{\mathbf{j}}^{(n-1)}(\mathbf{x}, \mathbf{x}; \lambda) d\mathbf{x} \sum_{\alpha=1}^n \int_{\mathbb R}\textsf{K}_{j_{\alpha}j_{\alpha}}(x_\alpha,x_\alpha) dx_{\alpha}.
\end{equation}
Now, for   each $\alpha \in \{1, \dots, n\}, $ 
we have a bijection 
   $ P_{\alpha} \left(J^{(n-1)}_k \times J^{(1)}_k\right) \rightarrow J^{(n)}_k$,
between index sets given by
\begin{equation}
P_\alpha\left((j_1, j_2, \dots, j_{\alpha - 1}, j_{\alpha}, \dots, j_{n-1}), \beta \right) = (j_1, \dots, j_{\alpha - 1}, \beta, j_{\alpha}, \dots, j_{n-1}).
\end{equation}

Using this bijection together with ~\eqref{eq:Fdef}, \eqref{eq:finaldet2thmproof2}
and Theorem~\ref{thm:MatrixKernelTrace}
we have that
\begin{align} \nonumber
   F'(\lambda)
    &\,\,=\,\, - \sum_{n=0}^{\infty} \frac{1}{n!} \sum_{\alpha=1}^n \left( \sum_{\mathbf{j} \in J^{(n-1)}_{k}}   \alpha^{(n-1)}_{\mathbf{j}}(\lambda) \int_{\mathbb{R}} \sum_{j_{\alpha}=1}^k \textsf{K}_{j_{\alpha}j_{\alpha}}(x,x)dx \right) \\ \nonumber
    &\,\,=\,\,  - \sum_{n=0}^{\infty} \frac{1}{n!}  \left( \sum_{\mathbf{j} \in J^{(n-1)}_{k}} \alpha^{(n-1)}_{\mathbf{j}}(\lambda) \right) \sum_{\alpha=1}^n \int_{\mathbb{R}} \operatorname{Tr}(\textsf{K})(x,x)dx\\ \nonumber
&\,\,=\,\, -n \operatorname{Tr}(\mathcal{K}) \sum_{n=0}^{\infty} \frac{1}{n!} \sum_{\mathbf{j} \in J^{(n-1)}_{k}} \alpha^{(n-1)}_{\mathbf{j}}(\lambda)\\ \nonumber
&\,\,=\,\, -\operatorname{Tr}(\mathcal{K}) \sum_{n=1}^{\infty} \sum_{\mathbf{j} \in J^{(n-1)}_k} \frac{\alpha^{(n)}_{\mathbf{j}}(\lambda)}{(n-1)!}
\,\,=\,\, -\operatorname{Tr}(\mathcal{K}) F(\lambda),
\end{align}
 as required.
\end{proof}

\section{The von-Koch Formula for Block Matrices}\label{vonKochsection}

The classical von-Koch formula~\cite{vonkoch1892}, \cite[Section 3]{Bornemann},
which inspired Fredholm's definition of his determinant~\cite{Fred1903},
 states that if $\mathbf{K}$ is an $M\times M$ matrix, then 
\begin{equation}\label{VKF}
    \det(\mathbf{I} + z\mathbf{K}) = \sum_{n=0}^{\infty} \frac{z^n}{n!} \sum_{j_1, \dots, j_n = 1}^M \det\left( [{K}_{j_\alpha j_\beta}]_{\alpha,\beta=1}^n \right)
\end{equation}
can be expressed in terms of
determinants of submatrices of $\mathbf{K}$. This formula follows from the fact that 
$\operatorname{Tr}(\wedge^n \mathbf{K})$ can be expressed as the sum of \emph{all} 
 principal $n \times n$ minors of $\mathbf{K}$.
 In particular, the series in \eqref{VKF} terminates at $n=M$.
  Significantly for our purposes, 
Bornemann uses \eqref{VKF} in the derivation of a formula for a numerical approximation of the  Fredholm determinant of a trace class operator on $L^2([a,b], \mathbb{C}).$ In this section, we state two generalizations of the von-Koch formula to block matrices
 that we will use in Sections~\ref{truncsection} and \ref{quadsection} to obtain numerical approximations of the Fredholm determinants of 
 operators on $L^2(\mathbb{R},\mathbb{C}^k)$.

Let $I_M = \{1, 2, 3, \dots, M\}$ be an index set 
and let $L(I_M, \mathbb{C}^k)$ be the $kM$-dimensional complex vector space of functions,
 $ \phi : I_M \rightarrow \mathbb{C}^k$,
 endowed with the inner product
 \begin{equation}
    \langle \phi, \psi \rangle_{L(I_M, \mathbb{C}^k)} := \sum_{m=1}^M \langle \phi(m), \psi(m) \rangle_{\mathbb{C}^k}.
\end{equation}
The functions $\phi_{m, j} (\ell) := \delta_{m \ell} \mathbf{e}_j$
for $m=1,\dots,M$ and $j=1,\dots,k$ form an orthonormal basis for $L(I_M, \mathbb{C}^k)$.
Therefore, for any $\phi \in L(I_M, \mathbb{C}^k)$, 
\begin{equation}\label{disc6}
    \phi = \sum_{m=1}^M\sum_{j=1}^k [\phi(m)]_j \phi_{m, j}.
\end{equation}
Let $\mathcal{K}$ be a linear operator on $L(I_M, \mathbb{C}^k)$. 
For each $n,m\in\{1,\dots,M\}$, we let $\mathbf K(n,m)$ be the 
$k\times k$ matrix whose $(i,j)$-entry is given by
 \begin{equation}
 [\mathbf K(n,m)]_{i,j} = \langle \mathcal K \phi_{m,j}, \phi_{n,i}\rangle_{L(I_M,\mathbb C^k)}.
\end{equation}
The action of  $\mathcal{K}$ on $\phi$ is given in terms of a sum of matrix-vector products,
\begin{equation}\label{claimkphi}
    (\mathcal{K} \phi)(n) = \sum_{m=1}^M \mathbf{K}(n,m) \phi(m),
\end{equation}
which can be verified by taking the inner product of both sides with $\mathbf e_j$.
Assembling  the $k\times k$ matrices $\mathbf K(n,m)$ into the $M\times M$ block matrix
\begin{equation}\label{kblocks}
    \mathbf{K} = \begin{bmatrix}
    \mathbf{K}(1,1) & \dots & \mathbf{K}(1,M)\\
    \vdots & \quad & \vdots\\
    \mathbf{K}(M,1) & \dots & \mathbf{K}(M,M)
    \end{bmatrix} \in \mathbb{C}^{kM \times kM},
\end{equation}
we find that $\mathcal K \phi = \mathbf K \phi$, that is $\mathbf K$ is the matrix of $\mathcal K$. 

 If we regard $I_M$ as the index set for a discretization, $\{x_1, \dots, x_M\}$, of a finite interval $I$, then $L(I_M, \mathbb{C}^k)$  can be regarded as a discretization of
  $L^2(I, \mathbb{C}^k)$. 
 Imitating the  proofs of Theorems~\ref{prop:TraceIntegralFormulaScalarFinite0} and \ref{finaldet2thm}, with integrals over $\mathbb R^n$ replaced by sums over a multi-index,
 $\mathbf m$, we obtain the following two generalizations of the classical von Koch formula.
 The first of these determinental formulae is derived in \cite[(8.3)]{Bornemann} using a different approach.

\begin{thm}\label{det1IM}
Let $\mathcal{K}$ be a linear operator on $L(I_M, \mathbb{C}^k).$  Let
\begin{equation}
    \boldsymbol{\Phi}_{\mathbf{m}, \mathbf{j}} = \phi_{m_1, j_1} \wedge \dots \wedge \phi_{m_n, j_n} \in \wedge^n(L(I_M, \mathbb{C}^k))
\end{equation}
and let
\begin{eqnarray}
\mathcal{M}_M^{(n)} &=& \{\mathbf{m} = (m_1 \dots m_n) : 1 \leq m_{\alpha} \leq M \,\, \forall \alpha\},\\
J_k^{(n)} &=& \{\mathbf{j} = (j_1 \dots j_n) : 1 \leq j_{\alpha} \leq k \,\, \forall \alpha\}.
\end{eqnarray}
Then 
  \begin{equation}\label{discretetrace}
        \operatorname{Tr}(\wedge^n \mathcal{K}) = \frac{1}{n!}  \sum_{\mathbf{j} \in J_k^{(n)}}  \sum_{\mathbf{m} \in \mathcal{M}_M^{(n)}} 
 \langle \boldsymbol{\Phi}_{\mathbf{m}, \mathbf{j}}, (\wedge^n \mathcal{K}) \boldsymbol{\Phi}_{\mathbf{m}, \mathbf{j}} \rangle_{\wedge^n(L(I_M, \mathbb{C}^k))},
    \end{equation}
     and the regular Fredholm determinant,  ${\det}_1(\mathcal{I} + z \mathcal{K})$,  is given by
     \begin{equation}\label{discdet} 
          \det(\mathbf{I} + z \mathbf{K}) 
         = \sum_{n=0}^{\infty} \frac{z^n}{n!}  \sum_{\mathbf{j} \in J^{(n)}_k} \sum_{\mathbf{m} \in \mathcal{M}_M^{(n)}} \det\left( [\mathbf{K}(m_{\alpha}, m_{\beta})]_{j_{\alpha} j_{\beta}} \right)_{\alpha, \beta = 1}^n.
 \end{equation}
\end{thm}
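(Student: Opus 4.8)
The plan is to repeat the arguments of Theorems~\ref{prop:TraceIntegralFormulaScalarFinite0} and \ref{finaldet2thm} in the purely finite-dimensional setting, where the Riemann-sum limits and the dominated-convergence step of those proofs are no longer needed. First note that, since $\dim L(I_M,\mathbb{C}^k)=kM<\infty$, the operator $\mathcal{K}$ is trace class and, by \eqref{tracedet} together with the series definition of ${\det}_1$, one has ${\det}_1(\mathcal{I}+z\mathcal{K})=\sum_{n=0}^{kM}z^n\operatorname{Tr}(\wedge^n\mathcal{K})=\det(\mathbf{I}+z\mathbf{K})$, the terms with $n>kM$ vanishing because $\wedge^n(L(I_M,\mathbb{C}^k))=\{0\}$ in that range. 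Thus it suffices to prove \eqref{discretetrace} and then substitute the resulting expression for $\operatorname{Tr}(\wedge^n\mathcal{K})$.

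To prove \eqref{discretetrace}, observe that $\wedge^n(L(I_M,\mathbb{C}^k))$ is finite-dimensional and, by the Gram-determinant identity \eqref{eq:detfromwedge}, the set $\{\boldsymbol{\Phi}_{\mathbf{m},\mathbf{j}}:(\mathbf{m},\mathbf{j})\in\mathcal{M}_M^{(n),\uparrow}\}$ is an orthonormal basis for it, where $\mathcal{M}_M^{(n),\uparrow}=\{(\mathbf{m},\mathbf{j}):(m_1,j_1)<\cdots<(m_n,j_n)\}$ is the composite lexicographic index set introduced in the proof of Theorem~\ref{prop:TraceIntegralFormulaScalarFinite0}. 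Hence the definition of the trace gives $\operatorname{Tr}(\wedge^n\mathcal{K})=\sum_{(\mathbf{m},\mathbf{j})\in\mathcal{M}_M^{(n),\uparrow}}\langle\boldsymbol{\Phi}_{\mathbf{m},\mathbf{j}},(\wedge^n\mathcal{K})\boldsymbol{\Phi}_{\mathbf{m},\mathbf{j}}\rangle$ (no projection or limit is needed, since the space is already finite-dimensional), and \eqref{discretetrace} follows from the symmetrization argument of Lemma~\ref{lemmatracesum}, applied to the linearized composite index as in the proof of Theorem~\ref{prop:TraceIntegralFormulaScalarFinite0}: pairs with a repetition $(m_\alpha,j_\alpha)=(m_\beta,j_\beta)$ contribute nothing because then $\boldsymbol{\Phi}_{\mathbf{m},\mathbf{j}}=0$, while the remaining elements of $\mathcal{M}_M^{(n)}\times J_k^{(n)}$ split into orbits of size $n!$ under the permutation action, on each of which the diagonal matrix element is constant since the two sign factors $(-1)^\pi$ cancel. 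For \eqref{discdet}, I would then evaluate this matrix element: by \eqref{Kcrossphi}, $(\wedge^n\mathcal{K})\boldsymbol{\Phi}_{\mathbf{m},\mathbf{j}}=(\mathcal{K}\phi_{m_1,j_1})\wedge\cdots\wedge(\mathcal{K}\phi_{m_n,j_n})$, so by \eqref{eq:detfromwedge} and the definition of the blocks $\mathbf{K}(n,m)$ the element $\langle\boldsymbol{\Phi}_{\mathbf{m},\mathbf{j}},(\wedge^n\mathcal{K})\boldsymbol{\Phi}_{\mathbf{m},\mathbf{j}}\rangle$ equals $\det\big([\mathbf{K}(m_\alpha,m_\beta)]_{j_\alpha j_\beta}\big)_{\alpha,\beta=1}^n$, up to a transpose that is immaterial under $\det$. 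Substituting into \eqref{discretetrace} and summing over $n$ gives \eqref{discdet}; the terms with a repeated pair now drop out automatically, since such a matrix has two equal rows and columns.

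The only real care needed is bookkeeping — linearizing the composite index $(\mathbf{m},\mathbf{j})$ so that Lemma~\ref{lemmatracesum} applies, keeping the lexicographic convention consistent between the orthonormal set $\wedge^n\mathcal{B}_M$ and the index set $\mathcal{M}_M^{(n),\uparrow}$, and checking that $\langle\phi_{n,i},\mathcal{K}\phi_{m,j}\rangle$ agrees, up to a harmless transpose, with the convention defining $[\mathbf{K}(n,m)]_{i,j}$. In contrast with Theorem~\ref{prop:TraceIntegralFormulaScalarFinite0}, there is no analytic obstacle here: the finite-dimensionality of $L(I_M,\mathbb{C}^k)$ removes all truncation and Riemann-sum limits, so the ``hard part'' is simply transcribing the earlier arguments with sums over the multi-index $\mathbf{m}$ in place of integrals over $\mathbb{R}^n$, as announced in the discussion preceding the theorem; when $k=1$ the statement reduces to the classical von-Koch formula \eqref{VKF}.
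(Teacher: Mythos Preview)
Your proposal is correct and follows exactly the approach the paper indicates: it simply says to imitate the proofs of Theorems~\ref{prop:TraceIntegralFormulaScalarFinite0} and \ref{finaldet2thm} with sums over $\mathbf{m}$ replacing integrals over $\mathbb{R}^n$, and you have carried this out faithfully, invoking Lemma~\ref{lemmatracesum} for the symmetrization and \eqref{eq:detfromwedge} for the matrix-element evaluation. Your remark about the transpose/conjugate convention in the definition of $[\mathbf{K}(n,m)]_{i,j}$ is apt and worth keeping track of, but as you note it does not affect the determinant.
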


Since all linear operators on finite-dimensional spaces are Hilbert-Schmidt, $\mathcal{K}$ has a $2$-modified Fredholm determinant, which is given by the following von-Koch formula for 
the block matrix, $\mathbf K$.  
 
\begin{thm}\label{vonKoch2block}
    Let $\mathbf{K}$ be the $M \times M$ 
 block matrix of $k \times k$ blocks,
 given by \eqref{kblocks}, and define
 \begin{equation}
     \widetilde{K}^{(n)}_{\mathbf{j}}(\mathbf{m}) := \det \left[ [\mathbf{K}(m_{\alpha}, m_{\beta})]_{j_{\alpha} j_{\beta}} (1 - \delta_{\alpha \beta})\right]_{\alpha, \beta = 1}^n
 \end{equation}
for each $\mathbf{j} \in J^{(n)}_k$ and $\mathbf{m} \in \mathcal{M}^{(n)}_M.$
Then the 2-modified Fredholm determinant, $ {\det}_{2}(\mathcal{I} + z\mathcal{K})$, is given by
\begin{equation}\label{det2matvk}
 {\det}_{2}(\mathbf{I} + z\mathbf{K}) = 
 e^{-\operatorname{Tr}(\mathbf K)}\,  {\det}(\mathbf{I} + z\mathbf{K}) =
 \sum_{n=0}^{\infty} \frac{z^n}{n!} \sum_{\mathbf{j} \in J^{(n)}_k}  \sum_{\mathbf{m}\in \mathcal{M}^{(n)}_{M}} \widetilde{K}^{(n)}_{\mathbf{j}}(\mathbf{m}).
 \end{equation}
\end{thm}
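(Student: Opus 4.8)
The plan is to imitate the proof of Theorem~\ref{finaldet2thm}, with the integrals $\int_{\mathbb{R}^n}\cdots\,d\mathbf{x}$ replaced by the finite sums $\sum_{\mathbf{m}\in\mathcal{M}_M^{(n)}}$. It suffices to treat $z=1$, since the general case follows upon replacing $\mathbf{K}$ by $z\mathbf{K}$, which multiplies $\widetilde{K}_{\mathbf{j}}^{(n)}(\mathbf{m})$ by $z^{n}$ and $\det(\mathbf{I}+\mathbf{K})$ by $\det(\mathbf{I}+z\mathbf{K})$. The first equality in~\eqref{det2matvk} is then immediate: $L(I_M,\mathbb{C}^k)$ is finite dimensional, hence $\mathcal{K}$ is trace class, so item~(4) of Proposition~\ref{prop:det2} gives ${\det}_2(\mathcal{I}+\mathcal{K})=e^{-\operatorname{Tr}(\mathcal{K})}{\det}_1(\mathcal{I}+\mathcal{K})$; since $\mathbf{K}$ is the matrix of $\mathcal{K}$ in the orthonormal basis $\{\phi_{m,j}\}$ we have $\operatorname{Tr}(\mathcal{K})=\operatorname{Tr}(\mathbf{K})$, and Theorem~\ref{det1IM} (equation~\eqref{discdet}) identifies ${\det}_1(\mathcal{I}+\mathcal{K})$ with $\det(\mathbf{I}+\mathbf{K})$.

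It remains to prove the second equality in~\eqref{det2matvk}. Following the proof of Theorem~\ref{finaldet2thm}, introduce a parameter $\lambda$ and set
\[
 F(\lambda)\;=\;\sum_{n=0}^{\infty}\frac{1}{n!}\sum_{\mathbf{j}\in J_k^{(n)}}\sum_{\mathbf{m}\in\mathcal{M}_M^{(n)}}\det\Bigl[\,[\mathbf{K}(m_{\alpha},m_{\beta})]_{j_{\alpha}j_{\beta}}\bigl(1-\lambda\delta_{\alpha\beta}\bigr)\Bigr]_{\alpha,\beta=1}^{n}.
\]
This series converges absolutely and locally uniformly in $\lambda$ — for instance by a Hadamard bound on the $n\times n$ determinants together with Stirling's formula — so $F$ is entire in $\lambda$ and may be differentiated term by term. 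By~\eqref{discdet} (with $z=1$), $F(0)=\det(\mathbf{I}+\mathbf{K})$, while $F(1)$ is exactly the right-hand side of~\eqref{det2matvk} with $z=1$. In view of the first equality, already established, it therefore suffices to show $F(\lambda)=F(0)\,e^{-\lambda\operatorname{Tr}(\mathbf{K})}$, or equivalently that $F'(\lambda)=-\operatorname{Tr}(\mathbf{K})\,F(\lambda)$.

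To compute $F'(\lambda)$, differentiate each order-$n$ determinant by the chain rule and a cofactor expansion, exactly as in~\eqref{eq:lambdaderB}: only the diagonal entries carry $\lambda$, so the $\lambda$-derivative of the $(\mathbf{m},\mathbf{j})$ determinant is $-\sum_{\alpha=1}^{n}\bigl(\det\text{ of the minor deleting row and column }\alpha\bigr)\,[\mathbf{K}(m_{\alpha},m_{\alpha})]_{j_{\alpha}j_{\alpha}}$; by~\eqref{eq:detminorB} that minor is itself the order-$(n-1)$ block determinant in the reduced multi-indices $\widehat{\mathbf{m}},\widehat{\mathbf{j}}$ obtained by dropping the $\alpha$-th entries. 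Because $\mathcal{M}_M^{(n)}$ is the full Cartesian product, the sum over $\mathbf{m}$ factors as $\sum_{\widehat{\mathbf{m}}\in\mathcal{M}_M^{(n-1)}}\sum_{m_{\alpha}=1}^{M}$, and the sum over $\mathbf{j}$ factors likewise via the bijection $P_{\alpha}\colon J_k^{(n-1)}\times J_k^{(1)}\to J_k^{(n)}$ used in the proof of Theorem~\ref{finaldet2thm}. Since $\sum_{m=1}^{M}\sum_{j=1}^{k}[\mathbf{K}(m,m)]_{jj}=\operatorname{Tr}(\mathbf{K})$, all $n$ choices of $\alpha$ contribute equally; the factor $n$ turns $\tfrac{1}{n!}$ into $\tfrac{1}{(n-1)!}$, and re-indexing $n\mapsto n-1$ collapses the result to $F'(\lambda)=-\operatorname{Tr}(\mathbf{K})\,F(\lambda)$, just as in the final display of the proof of Theorem~\ref{finaldet2thm}. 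Solving this first-order linear ODE and setting $\lambda=1$ gives the second equality in~\eqref{det2matvk}.

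The only substantive work lies in this differentiation–resummation step: identifying each deleted-row-and-column minor with the correct lower-order block-von-Koch determinant via~\eqref{eq:detminorB}, checking through the bijection $P_{\alpha}$ that the $n$ terms indexed by $\alpha$ contribute equally, and tracking the factorials. It is the verbatim discrete analogue of the passage from~\eqref{eq:lambdaderB} through~\eqref{eq:finaldet2thmproof2} in the proof of Theorem~\ref{finaldet2thm}, now with finite sums over multi-indices in place of the integrals over $\mathbb{R}^{n}$ and $\mathbb{R}^{n-1}$; in particular no dominated-convergence or measure-theoretic considerations are needed, and the multi-index sums factor exactly rather than up to a measure-zero diagonal.
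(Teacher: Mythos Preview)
Your proposal is correct and follows exactly the approach the paper indicates: immediately before Theorem~\ref{det1IM} the paper states that both von-Koch formulae are obtained by ``imitating the proofs of Theorems~\ref{prop:TraceIntegralFormulaScalarFinite0} and~\ref{finaldet2thm}, with integrals over $\mathbb{R}^n$ replaced by sums over a multi-index $\mathbf{m}$,'' and gives no further details. You have carried out precisely that imitation---introducing the $\lambda$-parametrized $F$, differentiating via the cofactor expansion~\eqref{eq:lambdaderB}, using the bijection $P_\alpha$ to factor the index sums, and solving the resulting ODE---so your argument is essentially the paper's intended proof written out in full.
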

\begin{rem}
In Section~\ref{quadsection}, we will use 
  the similarities between the Fredholm 
  determinant formulae in \eqref{Kdetnbyn} and \eqref{det2RCk}
  and the generalized von Koch formulae in (\ref{discdet}) and \eqref{det2matvk}
  to approximate the regular and 2-modified Fredholm determinants of operators with matrix-valued kernels  by determinants of block matrices.
\end{rem}

\section{Domain Truncation Error}\label{truncsection}

Let $\mathcal K \in \mathcal B_p(L^2(\mathbb R, \mathbb C^k))$ be a trace class ($p=1$)
or Hilbert-Schmidt ($p=2$) operator with a matrix-valued kernel $\textsf{K}\in L^2(
\mathbb R\times \mathbb R, \mathbb C^{k\times k})$. Our goal is to numerically
approximate ${\det}_p(\mathcal I + z \mathcal K)$ by the determinant of a 
block matrix defined in terms of the kernel $\textsf K$. The approach we take is
based on that of Bornemann~\cite{Bornemann} for operators on a finite interval.
First, we truncate $\mathcal K$ to an operator on the finite interval $[-L,L]$.
Then use a quadrature rule to approximate the $n$-dimensional 
integrals in Theorems~\ref{prop:TraceIntegralFormulaScalarFinite0} and \ref{finaldet2thm}, 
which are now over  hypercubes $[-L,L]^n$, by $n$-fold sums.
Finally, using the von~Koch formulae in Theorems~\ref{det1IM} and \ref{vonKoch2block},
we obtain the 
desired approximation of the Fredholm determinant.
In this section, we obtain an estimate for the rate of convergence as 
$L\to \infty$ of the 
Fredholm determinant on the truncated interval, $[-L,L]$ to that on $\mathbb R$.  In Section~\ref{quadsection}, we derive an estimate for the
quadrature error as the discretization parameter, $\Delta x\to 0$. Finally, the total
error is bounded by the sum of these two errors.

    Let  $\mathcal{K} \in \cB_p(L^2(\mathbb{R},\mathbb{C}^k)),$ for $p=1$ or $2,$ and define 
    \begin{equation}
        \mathcal{K}|_{[-L,L]} := P_L \circ \mathcal{K} \circ \iota_{L},
    \end{equation}
    where $\iota_L : L^2([-L,L],\mathbb{C}^k) \rightarrow L^2(\mathbb{R},\mathbb{C}^k)$ is the inclusion operator and $P_L : L^2(\mathbb{R},\mathbb{C}^k) \rightarrow L^2([-L,L],\mathbb{C}^k)$ is the  projection operator given by
    \begin{equation}
        (P_L \boldsymbol{\psi})(x) = \bigchi_{[-L,L]}(x) \boldsymbol{\psi}(x).
    \end{equation}
    Here, $\bigchi_{[-L,L]}$ is the characteristic function of $[-L,L]$. Since 
    the operators, $\iota_L$ and $P_L$, are bounded and 
    $\cB_p$ is an ideal, $\mathcal{K}|_{[-L,L]} \in \cB_p(L^2([-L, L],\mathbb{C}^k)).$ 
 We note that if $\textsf{K}$ is the kernel for $\mathcal K$, then
\begin{equation}\label{QM1a}
    (\mathcal{K}|_{[-L,L]}\boldsymbol{\phi})(x) = \int_{-L}^L \textsf{K}(x,y)\boldsymbol{\phi}(y) dy.
\end{equation}

\begin{thm}\label{truncthm}
Let $\mathcal{K} \in \cB_p(L^2(\mathbb{R},\mathbb{C}^k))$, for $p=1$ or $2$, and suppose
that $\exists \, C, a > 0$ such that
\begin{equation}\label{truncexpass}
    |\emph{\textsf{K}}_{ij}(x,y)| \leq Ce^{-a(|x| + |y|)}, \,\, \forall i,j \in \{1,...,k\}, \forall x, y, \in \mathbb{R}.
\end{equation}
Then 
\begin{equation}\label{eq:truncthm}
    \left| {\det}_p(\mathcal{I} + z \mathcal{K}) - {\det}_p (\mathcal{I} + z \mathcal{K}|_{[-L,L]})\right| \leq e^{-aL} \boldsymbol{\Phi} \left( \frac{2Ckz}{a} \right),
\end{equation}
where 
\begin{equation}\label{phisum}
    \boldsymbol{\Phi}(z) = \sum_{n=1}^{\infty} \frac{n^{(n+2)/2}}{n!} z^n.
\end{equation}
\end{thm}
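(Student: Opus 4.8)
The plan is to represent both Fredholm determinants by the series of Theorem~\ref{prop:TraceIntegralFormulaScalarFinite0} (when $p=1$) or Theorem~\ref{finaldet2thm} (when $p=2$), subtract them term by term, and bound the resulting tail using Hadamard's inequality together with the elementary one-dimensional estimate $\int_{|x|>L}e^{-a|x|}\,dx=\tfrac{2}{a}e^{-aL}$. First, \eqref{truncexpass} implies the decay bound \eqref{eq:HypExpDecay} of Hypothesis~\ref{hyp9} (with $b=a$ and $C$ replaced by $kC$), so those theorems apply to $\mathcal{K}$; since $\mathcal{K}|_{[-L,L]}$ has kernel $\textsf{K}$ restricted to $[-L,L]\times[-L,L]$, their finite-interval versions (see the Remarks following them) apply to $\mathcal{K}|_{[-L,L]}$, the only change being that the integrals in \eqref{prop:TraceIntegralFormulaScalarFinite0det} and \eqref{det2RCk} run over $[-L,L]^n$ instead of $\mathbb{R}^n$. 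Writing $g_{\mathbf j}^{(n)}$ for $K_{\mathbf j}^{(n)}$ when $p=1$ and for $\widetilde{K}_{\mathbf j}^{(n)}$ when $p=2$, and granting absolute convergence of the two series (established next), the left-hand side of \eqref{eq:truncthm} equals
\begin{equation}\label{plandiff}
\left|\,\sum_{n=1}^{\infty}\frac{z^n}{n!}\sum_{\mathbf j\in J_k^{(n)}}\int_{\mathbb{R}^n\setminus[-L,L]^n} g_{\mathbf j}^{(n)}(\mathbf x,\mathbf x)\,d\mathbf x\,\right|.
\end{equation}

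The key pointwise bound comes from Hadamard's inequality: $g_{\mathbf j}^{(n)}(\mathbf x,\mathbf x)$ is the determinant of an $n\times n$ matrix whose $(\alpha,\beta)$-entry has modulus at most $Ce^{-a|x_\alpha|}e^{-a|x_\beta|}\le Ce^{-a|x_\alpha|}$ (when $p=2$ some entries are additionally set to $0$, which only decreases row norms), so the Euclidean norm of row $\alpha$ is at most $\sqrt{n}\,Ce^{-a|x_\alpha|}$, whence
\begin{equation}\label{planhad}
\bigl|g_{\mathbf j}^{(n)}(\mathbf x,\mathbf x)\bigr|\le n^{n/2}C^n\prod_{\alpha=1}^n e^{-a|x_\alpha|}.
\end{equation}
Integrating \eqref{planhad} over $\mathbb{R}^n$ gives $n^{n/2}(2C/a)^n$ for each $\mathbf j$, and since $|J_k^{(n)}|=k^n$, summing over $\mathbf j$ and then over $n$ with weight $|z|^n/n!$ yields a convergent series; this shows that both the $\mathbb{R}$-series and the $[-L,L]$-series converge absolutely, justifying the term-by-term subtraction that produced \eqref{plandiff}.

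It remains to estimate \eqref{plandiff}. Since $\mathbb{R}^n\setminus[-L,L]^n=\bigcup_{\gamma=1}^n\{\mathbf x:|x_\gamma|>L\}$, and on the set $\{|x_\gamma|>L\}$ the integral $\int\prod_\alpha e^{-a|x_\alpha|}\,d\mathbf x$ factors as $\bigl(\tfrac{2}{a}e^{-aL}\bigr)\bigl(\tfrac{2}{a}\bigr)^{n-1}$, subadditivity gives $\int_{\mathbb{R}^n\setminus[-L,L]^n}\prod_\alpha e^{-a|x_\alpha|}\,d\mathbf x\le n(2/a)^n e^{-aL}$. Combining this with \eqref{planhad}, summing over the $k^n$ multi-indices $\mathbf j$, and multiplying by $|z|^n/n!$ bounds the $n$-th term of \eqref{plandiff} by $e^{-aL}\,\frac{n^{(n+2)/2}}{n!}\bigl(\frac{2Ck|z|}{a}\bigr)^n$; summing over $n\ge1$ gives exactly $e^{-aL}\boldsymbol{\Phi}\bigl(\tfrac{2Ck|z|}{a}\bigr)$, which is \eqref{eq:truncthm}. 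By Stirling's formula the coefficients $n^{(n+2)/2}/n!$ tend to $0$ super-exponentially, so $\boldsymbol{\Phi}$ is entire and the right-hand side of \eqref{eq:truncthm} is finite for every $z$. I expect the only genuine obstacle to be the bookkeeping in the first paragraph — confirming that the series representations of Theorems~\ref{prop:TraceIntegralFormulaScalarFinite0} and \ref{finaldet2thm} and their finite-interval analogues apply here, and that they converge absolutely so the two series may legitimately be subtracted; the analytic core, namely Hadamard's inequality followed by the scalar tail estimate, is then routine.
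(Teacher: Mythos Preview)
Your proposal is correct and follows essentially the same route as the paper: express both determinants via the series of Theorems~\ref{prop:TraceIntegralFormulaScalarFinite0} and \ref{finaldet2thm}, subtract to get an integral over $\mathbb{R}^n\setminus[-L,L]^n$, bound the integrand via Hadamard's inequality, and estimate the tail integral by covering $\mathbb{R}^n\setminus[-L,L]^n$ with the slabs $\{|x_\gamma|>L\}$. The only cosmetic differences are that the paper applies Hadamard column-wise rather than row-wise and packages both cases $p=1,2$ into a single formula via the factor $(1-(p-1)\delta_{\alpha\beta})$.
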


\begin{rem}
Bornemann \cite{Bornemann} shows that
$\boldsymbol{\Phi}(z) \leq z \boldsymbol{\Psi}(z\sqrt{2} e)$,
where
\begin{equation}
    \boldsymbol{\Psi}(z) = 1 + \frac{\sqrt{\pi}}{2}z e^{z^2/4} \left[1 + \erf\left(\frac{z}{2}\right) \right].
\end{equation}
Consequently, the upper bound in \eqref{eq:truncthm} can be estimated by a 
computable constant depending on the decay of the kernel $\textsf{K}.$
\end{rem}

\begin{proof}
Let
\begin{equation}\label{QM8a}
    \widetilde{K}^{(n)}_{\mathbf{j}}(\mathbf{x}, \mathbf{y}) := {\det}\left[ \textsf K_{j_{\alpha}j_{\beta}}(x_{\alpha}, y_{\beta}) (1 - (p-1)\delta_{\alpha \beta}) \right]_{\alpha, \beta = 1}^{n}.
\end{equation}
By Theorems \ref{prop:TraceIntegralFormulaScalarFinite0} and \ref{finaldet2thm}, the error in \eqref{eq:truncthm}  is given by 
\begin{align}\label{seriesL}
    E_{L} &\,\,:=\,\, |{\det}_p(\mathcal{I} + z\mathcal{K}) - {\det}_p(\mathcal{I} + z \mathcal{K}|_{[-L,L]})| 
    \nonumber
 \\    
   & \,\,\leq\,\,  \sum_{n=1}^{\infty} \frac{z^n}{n!} \sum_{\mathbf{j} \in J^{(n)}_k} \int_{\mathbb{R}^n \setminus [-L,L]^n} |\widetilde{K}^{(n)}_{\mathbf{j}}(\mathbf{x}, \mathbf{x})| d\mathbf{x}.
\end{align}
Let 
$\mathbf{A}_{*\beta}$ denote column $\beta$ of the matrix in \eqref{QM8a} whose determinant 
is  $\widetilde{K}^{(n)}_{\mathbf{j}}(\mathbf{x}, \mathbf{x})$.
By assumption (\ref{truncexpass}), 
\begin{equation} 
\|\mathbf{A}_{*\beta}\|_2 ^2  
\leq C^2 e^{-2 a |x_{\beta}|} \left[ e^{-2 a|x_1|} + \cdots + e^{-2 a|x_n|} \right]
\leq n C^2 e^{-2 a  |x_{\beta}|},
\end{equation}
where the second inequality follows from the fact that $e^{-a|x|} < 1$ for $a > 0.$
Therefore, by Hadamard's inequality
\begin{equation}\label{ktildehad}
    |\widetilde{K}^{(n)}_{\mathbf{j}}(\mathbf{x})| \leq C^n n^{n/2} e^{- a  \|\mathbf{x}\|_1},
\end{equation}
and so, since $|J^{(n)}_k| = k^n$,  
\begin{equation}\label{EL}
    |E_{L}| \leq \sum_{n=1}^{\infty} \frac{z^n}{n!} (Ck)^n n^{n/2} \int_{\mathbb{R}^n \setminus [-L,L])^n} e^{-a \|\mathbf{x}\|_1} d\mathbf{x}.
\end{equation}

When $n=2$, the integral in \eqref{EL} can be estimated as follows. First,
\begin{equation}
    \mathbb{R}^2 \setminus [-L,L]^2 = \{ |x_1| > L\} \cup \{|x_2|>L\},
\end{equation} although this union is not disjoint. Therefore,
\begin{align}
    &\int\limits_{\mathbb{R}^2 \setminus [-L,L]^2} 
     e^{-a(|x_1| + |x_2|)} dx_1 dx_2 
     \nonumber
     \\
     &\,\,\leq\,\,
   \int\limits_{|x_1|>L} dx_1  \int_{-\infty}^{\infty} e^{-a (|x_1| + |x_2|)} dx_2 +
   \int\limits_{|x_2|>L}dx_2\! \int_{-\infty}^{\infty}e^{-a (|x_1| + |x_2|)} dx_1
   \nonumber
   \\
   &\,\, =\,\, 2 \int_{|x|>L} e^{-a|x|}dx   \int_{-\infty}^\infty e^{-a|x|}
   \,\,=\,\,  2 \left( \frac{2}{a} e^{-a L}\right)\left( \frac{2}{a} \right).
\end{align}
Similarly, 
$\int_{\mathbb{R}^n\setminus [-L,L]^n} e^{-a \|\mathbf{x}\|_1} d\mathbf{x} \leq n \left(\frac{2}{a}\right)^n e^{-a L}$,
and so by (\ref{ktildehad}), 
\begin{equation} 
    |E_L| \,\,\leq\,\, \sum_{n=1}^{\infty} \frac{z^n}{n!} \left(\frac{2Ck}{a}\right)^n n^{(n+2)/2} e^{-aL}    \,\,=\,\, e^{- a L} \boldsymbol{\Phi}\left( \frac{2zCk}{a}\right).
\end{equation}
\end{proof}

\section{Quadrature Approximation and Error}\label{quadsection}

In order to compute $\det_p(\mathcal{I} + z\mathcal{K}|_{[-L, L]}),$ for $p=1$ or $2,$ we approximate the integrals $\int_{[-L,L]^n} \widetilde{K}^{(n)}_{\mathbf{j}}(\mathbf x, \mathbf x) d\mathbf{x}$ using a numerical integration scheme. 
This approach enables us to approximate the Fredholm determinant by a 
matrix determinant.
 In Theorem \ref{quadphibound} below, we show that if we use a quadrature rule based on the composite Simpson's rule and 
assume that the kernel $\textsf{K}$ is Lipschitz-continuous,  then the error
between these two determinants is ${O}(\Delta x),$ where $\Delta x$ is the grid spacing
of the quadrature rule.

For scalar-valued functions, $f :[a,b]\to\mathbb{C}$, let
$Q_M$ be a quadrature rule   of the form~\cite{atkinson2008introduction}
\begin{equation}
    Q_M(f) = \sum_{i=1}^M w_i f(x_i),
\end{equation}
that is defined in terms of $M$ nodes $a \leq x_1 < x_2 < \dots < x_M \leq b$ and positive weights $w_1, \dots, w_M.$ We suppose that this family of quadrature rules
converges for continuous functions in that 
\begin{equation}
    Q_M(f) \to \int_a^b f(x)dx, \,\, \text{ as } M \to \infty,
\end{equation}
for all $f \in C^0([a,b],\mathbb{C})$.

Let $\mathcal{K} \in \cB_p(L^2([a,b], \mathbb{C}^k))$ be a trace class ($p=1$) 
or Hilbert-Schmidt ($p=2$) operator with a matrix-valued kernel $\textsf{K} \in C^0 ([a,b] \times [a,b], \mathbb{C}^{k \times k})$, and let 
\begin{equation}\label{QM2}
    d_p(z) := {\det}_p(\mathcal{I} + z\mathcal{K}).
\end{equation}
Let $\mathbf{K}_Q \in \mathbb{C}^{kM \times kM}$  be the $M \times M$ block matrix 
\begin{equation}\label{QM4}
    \mathbf{K}_Q = \begin{bmatrix} w_1\mathbf{K}(1,1) & w_2 \mathbf{K}(1,2) & \dots & w_M \mathbf{K}(1,M)\\
    w_1 \mathbf{K}(2,1) & w_2 \mathbf{K}(2,2) & \dots & w_M \mathbf{K}(2,M)\\
    \vdots & & & \vdots\\
    w_1 \mathbf{K}(M,1) & w_2 \mathbf{K}(M,2) & \dots & w_M \mathbf{K}(M,M)
    \end{bmatrix},
\end{equation}
where 
   $ \mathbf{K}(\alpha, \beta) := \textsf{K}(x_{\alpha}, x_{\beta}) \in \mathbb{C}^{k \times k}$
is the $k \times k$ matrix obtained by evaluating $\textsf K$ at the nodes  $x_{\alpha}, x_{\beta} \in \{x_i\}_{i=1}^M$ of the quadrature rule $Q_M$.
Then the matrix determinant approximations of the Fredholm determinants in 
 \eqref{QM2} are defined by
\begin{align}\label{}
    d_{1,Q_M}(x) \,\,&=\,\, \det[\mathbf{I}_{kM \times kM} + z \mathbf{K}_{Q}],
\\
    d_{2,Q_M}(z) \,\,&=\,\, {\det}[\mathbf{I}_{kM \times kM} + z\mathbf{K}_Q]e^{-z \operatorname{Tr}(\mathbf{K}_Q)}.
\end{align}

As in  \cite[Theorems 6.1 \& 8.1]{Bornemann},  we have the following result.
\begin{thm}
Suppose that $\mathcal{K} \in \cB_p(L^2([a,b], \mathbb{C}^k))$ is a trace class or Hilbert-Schmidt operator with continuous matrix-valued kernel $\emph{\textsf{K}}$ and that $Q_M$ is a family of quadrature rules on $[a,b]$ that converges for continuous functions. Then
\begin{equation}
    d_{p,Q_M}(z) \to d_p(z) \,\, \text{ as } M \to \infty
\end{equation}
uniformly in $z$.
\end{thm}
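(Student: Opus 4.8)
The plan is to use the Fredholm-type series representations established earlier---formula \eqref{prop:TraceIntegralFormulaScalarFinite0det} for $\det_1$ and \eqref{det2RCk} for $\det_2$---together with the generalized von-Koch formulae \eqref{discdet} and \eqref{det2matvk}. The key observation is that $d_{p,Q_M}(z)$ is exactly a von-Koch-type determinant for the operator $\mathcal{K}$ on the discrete space $L(I_M,\mathbb{C}^k)$ whose block matrix is $\mathbf{K}_Q$. Consequently, expanding both $d_p(z)$ and $d_{p,Q_M}(z)$ as power series in $z$, the $n$-th coefficients differ by a sum over $\mathbf{j}\in J_k^{(n)}$ of the quantity
\begin{equation}
\int_{[a,b]^n}\widetilde{K}^{(n)}_{\mathbf{j}}(\mathbf{x},\mathbf{x})\,d\mathbf{x}
\;-\;\sum_{\mathbf{m}\in\mathcal{M}_M^{(n)}}w_{m_1}\cdots w_{m_n}\,\widetilde{K}^{(n)}_{\mathbf{j}}\bigl((x_{m_1},\dots,x_{m_n}),(x_{m_1},\dots,x_{m_n})\bigr),
\end{equation}
that is, the error of the product quadrature rule $Q_M^{\otimes n}$ applied to the continuous integrand $\mathbf{x}\mapsto\widetilde{K}^{(n)}_{\mathbf{j}}(\mathbf{x},\mathbf{x})$ on the hypercube $[a,b]^n$ (here I write $\widetilde{K}$ for the matrix with entries $\textsf{K}_{j_\alpha j_\beta}(x_\alpha,y_\beta)(1-(p-1)\delta_{\alpha\beta})$, as in \eqref{QM8a}).

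First I would record a uniform bound on each of these integrands. Since $\textsf{K}$ is continuous on the compact set $[a,b]\times[a,b]$, we have $|\textsf{K}_{ij}(x,y)|\le\Lambda$ for some $\Lambda$, hence by Hadamard's inequality exactly as in the proof of Theorem~\ref{truncthm}, $\bigl|\widetilde{K}^{(n)}_{\mathbf{j}}(\mathbf{x},\mathbf{x})\bigr|\le\Lambda^n n^{n/2}$. Combined with $|J_k^{(n)}|=k^n$ and the $1/n!$, and the fact that $\sum_i w_i=Q_M(1)\to(b-a)$ is bounded, this shows that both series converge uniformly and absolutely for $z$ in any bounded set, and---more importantly---that the tails past any index $n>N_0$ are uniformly small (using $n^{n/2}/n!\to0$). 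So for $|z|\le R$ fixed, given $\varepsilon>0$ one can choose $N_0$ so that the contribution of all terms with $n>N_0$ to $|d_p(z)-d_{p,Q_M}(z)|$ is $<\varepsilon/2$, uniformly in $M$ and in $|z|\le R$.

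It then remains to handle the finitely many terms $n=1,\dots,N_0$. For each such $n$ and each $\mathbf{j}$, the integrand $\widetilde{K}^{(n)}_{\mathbf{j}}(\cdot,\cdot)$ is a continuous function on the compact hypercube $[a,b]^n$, being a polynomial in the entries of $\textsf{K}$ evaluated at the coordinates. The remaining point is that the product rule $Q_M^{\otimes n}$ converges for continuous functions on $[a,b]^n$ whenever $Q_M$ converges for continuous functions on $[a,b]$ and has nonnegative weights with $\sup_M\sum_i w_i<\infty$; this is a standard tensorization argument (approximate a continuous function on $[a,b]^n$ uniformly by finite sums of products of one-variable continuous functions via Stone--Weierstrass, apply $Q_M$ in each variable, and control the error by $\bigl(\sup_M Q_M(1)\bigr)^{n}$ times the uniform approximation error). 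Hence each of the finitely many coefficient-errors tends to $0$ as $M\to\infty$; choosing $M$ large enough makes their combined contribution to $|d_p(z)-d_{p,Q_M}(z)|$, weighted by $|z|^n/n!\le R^n/n!$, less than $\varepsilon/2$, uniformly in $|z|\le R$. Adding the two estimates gives $|d_p(z)-d_{p,Q_M}(z)|<\varepsilon$ for all $|z|\le R$ once $M$ is large, which is the claimed uniform convergence.

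The main obstacle is the interchange of the limit $M\to\infty$ with the infinite sum over $n$: one cannot simply pass to the limit termwise without the uniform tail estimate, and getting that estimate hinges on the Hadamard bound $\Lambda^n n^{n/2}$ together with $n^{n/2}/n!\to 0$ and the uniform boundedness $\sup_M Q_M(1)<\infty$ (which follows from $Q_M(1)\to b-a$). A secondary technical point, easily dispatched, is the tensorization lemma for convergence of the product quadrature rule $Q_M^{\otimes n}$ on $C^0([a,b]^n)$; for $p=2$ one also uses that $\operatorname{Tr}(\mathbf{K}_Q)=\sum_\alpha w_\alpha\operatorname{Tr}(\textsf{K})(x_\alpha,x_\alpha)=Q_M(\operatorname{Tr}(\textsf{K})(\cdot,\cdot))\to\int_a^b\operatorname{Tr}(\textsf{K})(x,x)\,dx=\operatorname{Tr}(\mathcal{K})$, so that the exponential prefactor in $d_{2,Q_M}$ converges to that in $d_2$, after which the product of the two convergent factors (one of them uniformly bounded on $|z|\le R$) converges uniformly.
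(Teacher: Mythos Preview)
Your proposal is correct and follows essentially the same route as the paper: identify $d_{p,Q_M}(z)$ with the von-Koch series for $\mathbf{K}_Q$, expand both determinants termwise, use Hadamard's inequality for a uniform tail bound, and handle the finitely many remaining coefficients via convergence of the product quadrature rule on $C^0([a,b]^n)$. The paper's own proof carries out the algebraic identification \eqref{eq:dpquadderivation} and then simply defers the convergence estimates to Bornemann's Theorem~6.1, whereas you spell out those estimates (the Hadamard tail bound and the Stone--Weierstrass tensorization step) explicitly; your final paragraph on the exponential prefactor for $p=2$ is redundant, since your $\widetilde{K}^{(n)}_{\mathbf{j}}$ with the $(1-(p-1)\delta_{\alpha\beta})$ factor already treats both cases uniformly.
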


\begin{proof}
Recall that 
\begin{equation}\label{QM9}
    {\det}_p(\mathcal{I} + z\mathcal{K})
    = 1 + \sum_{n=1}^{\infty} \frac{z^n}{n!} \sum_{\mathbf{j} \in J^{(n)}_k} \int_{[a,b]^n} \widetilde{K}^{(n)}_{\mathbf{j}}(\mathbf{x}) d\mathbf{x},
\end{equation}
where $\widetilde{K}^{(n)}_{\mathbf{j}}$ is given by \eqref{QM8a}. 
To approximate the integral in \eqref{QM9}, we recall that 
 the 1-dimensional quadrature rule, $Q_M,$  defines an $n$-dimensional quadrature rule,
\begin{equation}\label{QM10}
    Q^{(n)}_M (f) := \sum_{\mathbf{m} \in \mathcal{M}^{(n)}_M} w_{m_1} \dots w_{m_M} f(x_{m_1}, \dots, x_{m_M}),
\end{equation} 
which converges for continuous functions. 
Calculating formally without regard for convergence issues,  by (\ref{QM10}), (\ref{QM8a}), the multilinearity property of determinants, and (\ref{QM4}), we have that for large $M$,
\begin{align}
&d_p(z)  
\approx 1 + \sum_{n=1}^{\infty}\frac{z^n}{n!} \sum_{\mathbf{j}\in J^{(n)}_k} Q^{(n)}_M 
( {\widetilde K}^{(n)}_{\mathbf j})
\nonumber
 \\
&= 1 + \sum_{n=1}^{\infty}\frac{z^n}{n!} \sum_{\mathbf{j},\mathbf {m}} w_{m_1} \dots  w_{m_M} {\det}\left[  \mathbf{K}_{j_{\alpha} j_{\beta}}(m_{\alpha}, m_{\beta})(1 - (p-1)\delta_{\alpha \beta}) \right]_{\alpha, \beta = 1}^{n}
\nonumber
\\
&=1 + \sum_{n=1}^{\infty}\frac{z^n}{n!} \sum_{\mathbf{j},\mathbf {m}}  {\det}\left[  [w_{m_\alpha}\mathbf{K}(m_{\alpha}, m_{\beta}]_{j_{\alpha} j_{\beta}}(1 - (p-1)\delta_{\alpha \beta}) \right]_{\alpha, \beta = 1}^{n}
\nonumber
\\
&=d_{p,Q_M}(z),
\label{eq:dpquadderivation}
\end{align}
where the inner sum is taken over $J^{(n)}_k\times  \mathcal{M}^{(n)}_M$
and  the final equality holds by the  von-Koch formulae \eqref{discdet} and \eqref{det2matvk}. 
The estimates required to obtain uniform convergence are very similar to those 
 for scalar-valued kernels given in the proof of Theorem~6.1 of Bornemann~\cite{Bornemann}.
\end{proof} 

For the purposes of numerical computation, it is more useful to obtain an estimate
on the rate of convergence of $d_{p,Q_M}$ to $d_p$ as the maximum grid spacing
converges to zero. To obtain such a result we need some additional regularity on the kernel.

Let  $C^{r,1}([a,b], \mathbb{C})$ be the space of functions whose  
$r$-th derivative, $f^{(r)}$, is Lipschitz continuous.  Such
functions are differentiable almost everywhere with bounded derivative,
\begin{equation}
    \|f^{(r)}\|_{L^{\infty}([a,b])} := \sup_{x \in [a,b]} |f^{(r)}(x)| < \infty.
\end{equation}
Furthermore, for a function, $f\in C^{r,1}([a,b]^n, \mathbb{C})$, of $n$ variables,
we have 
\begin{equation}
    |f|_r := \sum\limits_{i=1}^n \| \partial_i^r f \|_{L^{\infty}([a,b]^n)}   < \infty.
\end{equation}

Recall that a one-dimensional quadrature rule, $Q$, is of order $\nu_Q$ if $Q(f)$ is exact for all polynomials $f$ up to degree $\nu_Q - 1$~\cite{atkinson2008introduction}.
By Theorem A.3 of Bornemann~\cite{Bornemann}, if 
$f \in \mathcal{C}^{r-1,1}([a,b]^n,\mathbb C),$ then for each quadrature rule $Q$ of order $\nu_Q \geq p$ with positive weights,\footnote{Bornemann omits the  factor of $n$, whose
necessity is evident in the proof of Theorem~\ref{thmcompbound} below, and in Davis and Rabinowitz~\cite[p. 361]{DavisRabinowitz}.}
\begin{equation}\label{Bornquadn}
    \left| Q^{(n)}(f) - \int_{[a,b]^n} f(\mathbf x) d\mathbf x \right| \leq c_r \nu_Q^{-r} 
    |f|_r  n (b-a)^{n+r},
\end{equation}
where the constant, $c_r=2\left(\frac{\pi e}{4}\right)^r / \sqrt{2 \pi r}$, only depends on $r$.
This result shows that the quadrature error decays like $\nu_Q^{-r}$ as $\nu_Q \rightarrow \infty$. This estimate is useful in situations where $f$ is approximated by a single high-degree polynomial on the entire interval $[a,b].$ However,  it is often more practical to 
employ a composite quadrature rule in which the domain $[a,b]$ is divided into 
subintervals and $f$ is separately approximated  by a low degree polynomial on each subinterval.
For such a composite rule, we are interested in the rate at which the quadrature error converges to zero as the maximum grid spacing converges to zero.  Bornemann's error estimate (\ref{Bornquadn}) cannot directly be applied to the $n$-dimensional  extension of a 
1-dimensional composite quadrature rule. Instead, here we  derive an error estimate for the $n$-dimensional quadrature rule that is based on the composite Simpson's rule, which is of order $\nu_Q = 4$.

We divide the interval $[a,b]$ into $M$ 
subintervals $I_j$ of width  $\Delta x_j$. 
To allow for an adaptive method we allow the $\Delta x_j$ to vary with $j$.
We let  nodes, $a = x_1 < x_2 < \dots < x_{2M+1} = b$, be such that
 $I_j = [x_{2j-1}, x_{2j+1}]$ and $x_{2j}$ is the midpoint of $I_j$ for $j=1,\dots, M$.
On each subinterval, $I_j,$ we apply  Simpson's rule, 
\begin{equation}
    Q_{I_j}(f) := \sum_{k=-1}^1 w_{j,k} f(x_{2j-k}) \approx \int_{I_j} f(x) dx,
\end{equation}
where $w_{j,-1} = w_{j,1} = \frac{\Delta x_j}{6},$ and $w_{j,0} = \frac{2 \Delta x_j}{3}.$
By \eqref{Bornquadn}, if $f \in C^{r-1,1}([a,b], \mathbb{C}),$ then 
\begin{equation}\label{1dintsimpbound}
    \left| Q_{I_j}(f) - \int_{I_j} f(x) dx \right| \leq c_r 4^{-r} \|f^{(r)}\|_{L^{\infty}(I_j)}\Delta x_j (\Delta x_{\operatorname{max}})^{r} 
\end{equation}
where
$\Delta x_{\operatorname{max}}= \operatorname{max}_j  \Delta x_j$.
Then the composite Simpson's rule is given by 
\begin{equation}\label{qdoublestar}
    Q_{[a,b]}(f) := \sum_{j=1}^M Q_{I_j}(f) \approx \int_a^b f(x) dx,
\end{equation}  and by (\ref{1dintsimpbound}), 
\begin{equation}
\left| Q_{[a,b]}(f) - \int_{a}^b f(x) dx \right| \leq c_r  4^{-r} \|f^{(r)}\|_{L^{\infty}[a,b]}
 (b-a)(\Delta x_{\operatorname{max}})^r.
\end{equation}

We observe that
\begin{equation}\label{Qab}
    Q_{[a,b]}(f) = \sum_{k=1}^{2M+1} w_k f(x_k),
\end{equation}
where now
$w_1 = \frac{\Delta x_1}{6}$,  $w_{2M+1} = \frac{\Delta x_M}{6}$, 
$w_{2j} = \frac{2 \Delta x_j}{3}$, and $w_{2j+1} = \frac{(\Delta x_j + \Delta x_{j+1})}{6}$ for $j=1, \dots, M-1.$

\begin{thm}\label{thmcompbound}
Suppose that $f \in C^{r-1,1}([a,b]^n,\mathbb C)$ for some $1\leq r \leq 4.$ Let $Q_{[a,b]}^{(n)}$ be the $n$-dimensional quadrature rule induced by the composite Simpson's quadrature rule $Q_{[a,b]}$ in (\ref{Qab}). Then
\begin{equation}
 \left| Q_{[a,b]}^{(n)}(f) - \int_{[a,b]^n} f(\mathbf{x}) d\mathbf{x}  \right| \leq c_r  4^{-r}  |f|_r n(b-a)^n  (\Delta x_{\operatorname{max}})^r.
\end{equation}
\end{thm}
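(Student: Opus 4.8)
The plan is to prove the estimate by the classical ``dimension-by-dimension'' telescoping argument for tensor-product quadrature (cf.\ Davis and Rabinowitz~\cite{DavisRabinowitz}), reducing everything to the one-dimensional composite Simpson error bound~\eqref{1dintsimpbound}. Write $Q_i$ for the one-dimensional composite Simpson functional $Q_{[a,b]}$ of~\eqref{Qab} applied in the variable $x_i$ (treating the remaining variables as parameters), and $I_i := \int_a^b \cdot\, dx_i$. Since $f$ is continuous on the compact set $[a,b]^n$, Fubini's theorem gives $\int_{[a,b]^n} f = I_1\cdots I_n f$, and by construction $Q^{(n)}_{[a,b]} = Q_1\cdots Q_n$; operators acting on distinct variables commute. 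Applying the elementary algebraic telescoping identity $A_1\cdots A_n - B_1\cdots B_n = \sum_{i=1}^n A_1\cdots A_{i-1}(A_i-B_i)B_{i+1}\cdots B_n$ with $A_i = Q_i$ and $B_i = I_i$ yields
\[
Q^{(n)}_{[a,b]}(f) - \int_{[a,b]^n} f(\mathbf x)\,d\mathbf x
= \sum_{i=1}^n Q_1\cdots Q_{i-1}\,(Q_i - I_i)\,I_{i+1}\cdots I_n\, f .
\]

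Next I would bound the $i$-th summand in three steps. Put $g := I_{i+1}\cdots I_n f$, a function of $(x_1,\dots,x_i)$. Because $f\in C^{r-1,1}([a,b]^n,\mathbb C)$, the partial derivative $\partial_i^r f$ exists a.e.\ and is bounded, so differentiating under the integral sign gives $\partial_i^r g = I_{i+1}\cdots I_n\,\partial_i^r f$ and hence $\|\partial_i^r g\|_{L^\infty} \le (b-a)^{n-i}\,\|\partial_i^r f\|_{L^\infty([a,b]^n)}$, uniformly in $x_1,\dots,x_{i-1}$. Applying the one-dimensional composite Simpson error estimate (the display following~\eqref{qdoublestar}, which follows from~\eqref{1dintsimpbound} by summing over subintervals and using $\sum_j\Delta x_j = b-a$) in the variable $x_i$ gives $|(Q_i-I_i)g| \le c_r 4^{-r}\,\|\partial_i^r g\|_{L^\infty}\,(b-a)\,(\Delta x_{\operatorname{max}})^r$, still uniformly in the parameter variables. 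Finally, the composite Simpson weights are positive with total mass $b-a$ (on each $I_j$, $\tfrac{\Delta x_j}{6}+\tfrac{2\Delta x_j}{3}+\tfrac{\Delta x_j}{6}=\Delta x_j$, and $\sum_j\Delta x_j = b-a$), so each of $Q_1,\dots,Q_{i-1}$ has operator norm $b-a$ on $L^\infty$; thus $|Q_1\cdots Q_{i-1}h| \le (b-a)^{i-1}\|h\|_{L^\infty}$. Chaining the three bounds, the $i$-th summand is at most $c_r 4^{-r}(b-a)^n(\Delta x_{\operatorname{max}})^r\,\|\partial_i^r f\|_{L^\infty([a,b]^n)}$, which is in turn $\le c_r 4^{-r}(b-a)^n(\Delta x_{\operatorname{max}})^r\,|f|_r$; summing the $n$ terms gives the asserted bound, and accounts for the factor $n$ highlighted in the footnote.

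The genuinely routine parts are the telescoping identity and the commutation of the operators. The only steps needing care are the justification of differentiation under the integral sign for $\partial_i^r$ (which uses only boundedness and measurability of $\partial_i^r f$, hence $f\in C^{r-1,1}$) and, more importantly, checking that each intermediate estimate is \emph{uniform} in the ``parameter'' variables, so that the remaining operators $Q_1,\dots,Q_{i-1}$ may legitimately be composed afterwards. I also note that the hypothesis $r\le 4$ is exactly what is needed for~\eqref{1dintsimpbound} to be available, since Simpson's rule has order $\nu_Q = 4$, so no rate better than $(\Delta x_{\operatorname{max}})^4$ can be extracted regardless of the smoothness of $f$.
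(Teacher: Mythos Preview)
Your proof is correct. Both you and the paper reduce to the one-dimensional Simpson error bound by peeling off one coordinate at a time, but the organization differs. The paper first decomposes $[a,b]^n$ into elementary sub-hypercubes $I_{j_1}\times\cdots\times I_{j_n}$, proves by induction on $n$ a per-subcube estimate
\[
\Bigl|Q_{j_1\dots j_n}(f)-\int_{I_{j_1}\times\cdots\times I_{j_n}}f\Bigr|\le c_r 4^{-r}|f|_r\, n\,(\Delta x_{\max})^r\prod_{k=1}^n\Delta x_{j_k},
\]
and then sums over all subcubes. You instead telescope $Q_1\cdots Q_n-I_1\cdots I_n$ globally and invoke the already-assembled composite bound (the display following~\eqref{qdoublestar}) in each coordinate. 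Your route is the classical Davis--Rabinowitz argument the paper cites in its footnote, and is somewhat shorter; the paper's per-subcube formulation is more naturally adapted to tracking local error for adaptive meshes. One bonus of your organization worth noting: since your $i$-th summand is bounded by $c_r 4^{-r}(b-a)^n(\Delta x_{\max})^r\|\partial_i^r f\|_{L^\infty}$, summing over $i$ already gives $|f|_r=\sum_i\|\partial_i^r f\|_{L^\infty}$ exactly, so your argument in fact proves the theorem \emph{without} the factor of $n$; you only re-introduced it by the unnecessary weakening $\|\partial_i^r f\|_{L^\infty}\le |f|_r$ before summing.
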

\begin{proof}
Since 
   $ [a,b]^n = \bigcup_{j_1,\dots,j_n=1}^M I_{j_1} \times \dots \times I_{j_n}$,
\begin{equation}\label{Qnfab}
    Q_{[a,b]}^{(n)}(f) = \sum_{j_1, \dots, j_n=1}^M Q_{j_1 \dots j_n}(f),
\end{equation}
where
\begin{eqnarray} \nonumber 
    Q_{j_1 \dots j_n}(f) &=& \sum_{k_1, \dots, k_n = -1}^{1} w_{j_1,k_1} \dots w_{j_n,k_n} f(x_{2j_1 - k_1}, \dots, x_{2j_n - k_n})\\  
    &\cong& \int_{I_{j_1}\times \dots \times I_{j_n}} f(x_1,\dots, x_n) dx_1 \dots dx_n.
\end{eqnarray}
For each $m \leq n$ let 
\begin{equation}
    F_{j_1 \dots j_m}(y_{m+1}, \dots, y_n) := \int_{I_{j_1} \times \dots \times I_{j_m}} f(x_1, \dots, x_m, y_{m+1}, \dots, y_n) dx_1 \dots dx_m
\end{equation}
be the partial iterated integral of $f,$ which is approximated by the partial quadrature rule 
\begin{align}
    &(Q_{j_1 \dots j_m} f)(y_{m+1}, \dots, y_n)  
    \nonumber
    \\
    &= \sum_{k_1, \dots, k_m = -1}^1  w_{k_1}\dots w_{k_m} f(x_{2j_1 - k_1}, \dots, x_{2j_m - k_m}, y_{m+1}, \dots, y_n).  
\end{align}
The identity $Q_{j_1 \dots j_n}(f)  = Q_{j_n} (Q_{j_1 \dots j_{n-1}} f)$
enables us to use  induction to prove the following claim. 

\begin{claim}
\begin{equation}\label{anclaim}
    \left|  Q_{j_1 \dots j_n}(f) - \int\limits_{I_{j_1} \times \dots \times I_{j_n}}  f(\mathbf x) d\mathbf{x}   \right| 
     \,\,\leq\,\,
     c_r 4^{-r}|f|_r \,n     (\Delta x_{\operatorname{max}})^{r}  
     \prod\limits_{k=1}^n \Delta x_{j_k}
\end{equation}
\end{claim}

\begin{proof} 
We calculate that 
\begin{align} 
&\left| Q_{j_1 \dots j_n}(f) \,\,-\,\, \int_{I_{j_1 \times \dots \times I_{j_n}}}   f(\mathbf{x}) d\mathbf{x}  \right| 
 \nonumber
\\
\,\,=\,\,& \left| 
Q_{j_n}(Q_{j_1,\dots,j_{n-1}} f) \,\,-\,\,  \int_{I_{j_n}}\int_{I_{j_1} \times \dots \times I_{j_{n-1}}}  f(\mathbf{x}, x_n) d\mathbf{x} \,dx_n  \right|
 \nonumber
\\
\,\,\leq\,\, & \left| Q_{I_{j_n}} (Q_{j_1,\dots,j_{n-1}} f) \,\,-\,\, \int_{I_{j_n}} (Q_{j_1 \dots j_{n-1}} f)(x_n) dx_n  \right|
 \nonumber 
\\
&\,\,+\,\,   \int_{I_{j_n}} \left| (Q_{j_1,\dots,j_{n-1}} f)(x_n)\,\, -\,\, 
\int_{I_{j_1} \times \dots \times I_{j_{n-1}}}  f(\mathbf{x}, x_n) d\mathbf{x}  \right| dx_n
 \nonumber 
\\  
\,\,\leq\,\, & c_r 4^{-r} (\Delta x_{\operatorname{max}})^{r}  
\left( | Q_{j_1 \dots j_{n-1}} f |_r \Delta x_{j_n}  
+  |f|_r (n-1)     \prod\limits_{k=1}^{n} \Delta x_k   \right)
\nonumber 
\\
\,\,\leq\,\,&   c_r 4^{-r}|f|_r \,n     (\Delta x_{\operatorname{max}})^{r}  
     \prod\limits_{k=1}^n \Delta x_{j_k},
     \nonumber
 \end{align}
since 
\begin{align}
&\left| (Q_{j_1 \dots j_{n-1}} f)^{(r)}(x_n) \right| 
\nonumber
\\
\,\,\leq\,\,&
\sum_{k_1,\dots,k_{n-1} = -1}^1 \!\!\!\!\! \!\!\! w_{j_1,k_1} \dots w_{j_{n-1},k_{n-1}} \left| \partial_n^r f(x_{2j_1 - k_1}, \dots, x_{2j_{n-1} - k_{n-1}}, x_n) \right|
\nonumber 
\\ 
\,\,\leq\,\,&  |f|_r  \prod\limits_{k=1}^{n-1}(w_{j_k,-1} + w_{j_k,0} + w_{j_k,1})
\,\,\leq\,\,  |f|_r  \prod\limits_{k=1}^{n-1} \Delta x_{j_k}.
\nonumber
\end{align}
\end{proof}
To complete the proof of the Theorem, by (\ref{Qnfab}) and (\ref{anclaim}), we have
\begin{align}
\left | Q_{[a,b]}^{(n)}f  - \int_{[a,b]^n}  f(\mathbf{x}) d\mathbf{x} \right|
&\,\,\leq\,\, 
\sum\limits_{j_1 \dots j_n = 1}^M \left| 
Q_{j_1 \dots j_n} f  \,\,- 
\int\limits_{I_{j_1}\times \dots \times I_{j_n}} f(\mathbf{x}) d\mathbf{x} 
\right|
\nonumber
\\
&\,\,\leq\,\, 
c_r 4^{-r}|f|_r \,n     (\Delta x_{\operatorname{max}})^{r}  
   \sum\limits_{j_1 \dots j_n = 1}^M   \prod\limits_{k=1}^n \Delta x_{j_k},
\end{align}
where
$ \sum\limits_{j_1 \dots j_n = 1}^M   \prod\limits_{k=1}^n \Delta x_{j_k} =
(b-a)  \sum\limits_{j_1 \dots j_{n-1} = 1}^M   \prod\limits_{k=1}^{n-1} \Delta x_{j_k}
= (b-a)^n$.
\end{proof}

We can now state the main result of this section.

\begin{thm}\label{quadphibound}
Let $\mathcal{K}\in \cB_p(L^2([a,b],\mathbb{C}^k))$ be an operator with matrix-valued kernel $\emph{\textsf{K}} \in L^2([a,b]\times[a,b], \mathbb{C}^{k \times k}).$ Suppose that for some $1\leq r\leq 4$,
$\emph{\textsf{K}}_{ij} \in C^{r-1,1}([a,b] \times [a,b], \mathbb{C})$ for all $i,j \in \{1, \dots, k\}$.
Let $Q$ be an adaptive composite Simpson's quadrature rule on $[a,b]$ with maximum grid 
spacing $\Delta x_{\operatorname{max}}$. Then
\begin{equation}\label{quaderrest}
    |d_{p,Q}(z) - d_p(z)| \leq \frac{c_r}{2^r}  \boldsymbol{\Phi}\left(2k(b-a)\|\emph{\textsf{K}}\|_{r} |z|\right) (\Delta x_{\operatorname{max}})^r,
\end{equation}
where  $c_r$ is the constant in Theorem \ref{thmcompbound}  $ \boldsymbol{\Phi}$ is defined in \eqref{phisum}, and
\begin{equation}\label{newnorm}
\|\emph{\textsf{K}}\|_{r} = \max\limits_{i+j\leq r} \| \partial^i_x\partial^j_y \emph{\textsf K} \|_{L^\infty([a,b]^2)}.\end{equation}

\end{thm}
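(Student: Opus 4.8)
The plan is to combine the series representations of the two determinants with the $n$-dimensional quadrature estimate of Theorem~\ref{thmcompbound}. By the Fredholm determinant formulae for $d_p(z)={\det}_p(\cI+z\cK)$ on a finite interval (Theorems~\ref{prop:TraceIntegralFormulaScalarFinite0} and \ref{finaldet2thm}, which hold on $[a,b]$ without the exponential decay hypothesis) together with the derivation \eqref{eq:dpquadderivation} and the von--Koch formulae \eqref{discdet}, \eqref{det2matvk}, both $d_p(z)$ and $d_{p,Q}(z)$ are series in $n$ whose $n$-th terms are $\frac{z^n}{n!}\sum_{\mathbf j\in J_k^{(n)}}\int_{[a,b]^n}F_{\mathbf j}^{(n)}(\mathbf x)\,d\mathbf x$ and $\frac{z^n}{n!}\sum_{\mathbf j\in J_k^{(n)}}Q^{(n)}(F_{\mathbf j}^{(n)})$ respectively, where $F_{\mathbf j}^{(n)}(\mathbf x):=\widetilde K_{\mathbf j}^{(n)}(\mathbf x,\mathbf x)$ is the diagonal restriction of \eqref{QM8a} and $Q^{(n)}$ is the $n$-fold composite Simpson's rule. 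Subtracting term by term (legitimate a posteriori from the absolute convergence established below),
\[ d_{p,Q}(z)-d_p(z)=\sum_{n=1}^{\infty}\frac{z^n}{n!}\sum_{\mathbf j\in J_k^{(n)}}\Bigl(Q^{(n)}\bigl(F_{\mathbf j}^{(n)}\bigr)-\int_{[a,b]^n}F_{\mathbf j}^{(n)}(\mathbf x)\,d\mathbf x\Bigr). \]

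Next I would apply Theorem~\ref{thmcompbound}. Since each $\textsf K_{ij}\in C^{r-1,1}([a,b]^2,\mathbb C)$ and $F_{\mathbf j}^{(n)}$ is a polynomial in the functions $\mathbf x\mapsto\textsf K_{j_\alpha j_\beta}(x_\alpha,x_\beta)$, and $C^{r-1,1}([a,b]^n,\mathbb C)$ is closed under products and under composition with the linear coordinate maps $\mathbf x\mapsto(x_\alpha,x_\beta)$, we have $F_{\mathbf j}^{(n)}\in C^{r-1,1}([a,b]^n,\mathbb C)$, so Theorem~\ref{thmcompbound} gives
\[ \Bigl|Q^{(n)}\bigl(F_{\mathbf j}^{(n)}\bigr)-\int_{[a,b]^n}F_{\mathbf j}^{(n)}(\mathbf x)\,d\mathbf x\Bigr|\le c_r\,4^{-r}\,\bigl|F_{\mathbf j}^{(n)}\bigr|_r\;n\,(b-a)^n(\Delta x_{\operatorname{max}})^r. \]
Everything then reduces to bounding the seminorm $\bigl|F_{\mathbf j}^{(n)}\bigr|_r=\sum_{i=1}^n\bigl\|\partial_{x_i}^rF_{\mathbf j}^{(n)}\bigr\|_{L^\infty([a,b]^n)}$ in terms of $\|\textsf K\|_r$ and $n$.

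The key step is the seminorm estimate, which mirrors Bornemann's scalar argument and the proof of Theorem~\ref{truncthm}: the variable $x_i$ occurs in the matrix $\bigl[\textsf K_{j_\alpha j_\beta}(x_\alpha,x_\beta)(1-(p-1)\delta_{\alpha\beta})\bigr]_{\alpha,\beta=1}^n$ only in row $i$ and column $i$. Expanding its determinant along row $i$ writes $F_{\mathbf j}^{(n)}=\sum_{\beta=1}^n(-1)^{i+\beta}A_{i\beta}M_{i\beta}$, where the minor $M_{ii}$ is independent of $x_i$, while for $\beta\ne i$ the minor $M_{i\beta}$ depends on $x_i$ only through a single one of its columns; hence, by multilinearity of the determinant in that column and Hadamard's inequality for the resulting $(n-1)\times(n-1)$ determinant (each entry, or its $x_i$-derivative of order $\le r$, being bounded by $\|\textsf K\|_r$), $|\partial_{x_i}^sM_{i\beta}|\le(n-1)^{(n-1)/2}\|\textsf K\|_r^{n-1}$ and $|M_{ii}|\le(n-1)^{(n-1)/2}\|\textsf K\|_r^{n-1}$ for $0\le s\le r$. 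Moreover $|\partial_{x_i}^sA_{i\beta}|\le\|\textsf K\|_r$ for $\beta\ne i$, and $|\partial_{x_i}^sA_{ii}|\le 2^r\|\textsf K\|_r$ (the term $A_{ii}$ vanishes when $p=2$; when $p=1$ this follows from the chain rule since both arguments of $\textsf K_{j_ij_i}$ equal $x_i$). Applying the Leibniz rule to each of the $n$ summands $A_{i\beta}M_{i\beta}$ (contributing a factor $\sum_s\binom rs=2^r$) and summing,
\[ \bigl\|\partial_{x_i}^rF_{\mathbf j}^{(n)}\bigr\|_{L^\infty}\le n\,2^r(n-1)^{(n-1)/2}\|\textsf K\|_r^n, \qquad\text{so}\qquad \bigl|F_{\mathbf j}^{(n)}\bigr|_r\le 2^r\,n^2(n-1)^{(n-1)/2}\|\textsf K\|_r^n. \]

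Finally, using $|J_k^{(n)}|=k^n$ and substituting the last bound into the quadrature estimate and then into the series, the $4^{-r}2^r=2^{-r}$ yields the prefactor $c_r/2^r$ and
\[ |d_{p,Q}(z)-d_p(z)|\le\frac{c_r}{2^r}(\Delta x_{\operatorname{max}})^r\sum_{n=1}^{\infty}\frac{n^3(n-1)^{(n-1)/2}}{n!}\bigl(k(b-a)\|\textsf K\|_r|z|\bigr)^n; \]
the elementary inequality $n^3(n-1)^{(n-1)/2}\le n^{(n+5)/2}\le 2^n n^{(n+2)/2}$ for all $n\ge1$ (equivalently $n^{3/2}\le 2^n$) bounds the right-hand side by $\frac{c_r}{2^r}\boldsymbol{\Phi}\bigl(2k(b-a)\|\textsf K\|_r|z|\bigr)(\Delta x_{\operatorname{max}})^r$ with $\boldsymbol{\Phi}$ as in \eqref{phisum}, and the convergence of this series for every $z$ retroactively justifies the term-by-term subtraction. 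I expect the main obstacle to be precisely the seminorm estimate of the third paragraph: naively differentiating the full $n!$-term Leibniz expansion of the determinant introduces a factor $n!$ that destroys convergence, so it is essential to exploit that the $x_i$-dependence is confined to one row and one column, which reduces the derivative bound to Hadamard's inequality and recovers the benign $n^{n/2}$ growth that $\boldsymbol{\Phi}$ can absorb.
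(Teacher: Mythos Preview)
Your proof is correct and follows essentially the same route as the paper's: expand $d_p$ and $d_{p,Q}$ via the Fredholm/von--Koch series, apply Theorem~\ref{thmcompbound} termwise, bound the seminorm $|\widetilde K^{(n)}_{\mathbf j}|_r$, and absorb the residual polynomial factor in $n$ into the argument of $\boldsymbol\Phi$. The only difference is that the paper outsources the seminorm estimate to Bornemann's Lemma~A.4, obtaining $|\widetilde K^{(n)}_{\mathbf j}|_r\le 2^r n^{(n+2)/2}\|\textsf K\|_r^n$ and then using $n\le 2^n$, whereas you derive the slightly different bound $2^r n^2(n-1)^{(n-1)/2}\|\textsf K\|_r^n$ directly via cofactor expansion along row~$i$ plus Hadamard and then use $n^{3/2}\le 2^n$; both yield the identical final estimate.
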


\begin{proof}
As in Lemma A.4 of \cite{Bornemann}, 
\begin{equation}
    \left| \widetilde{K}^{(n)}_{\mathbf{j}} \right|_r 
     \leq 2^r n^{(n+2)/2} \|\textsf{K}\|_{r}^n
\end{equation}
where $\widetilde{K}^{(n)}_{\mathbf{j}}$ is given by (\ref{QM8a}). 
So, by \eqref{eq:dpquadderivation} and Theorem~\ref{thmcompbound},
\begin{align}
|d_{p,Q}(z) - d_p(z)| 
&\,\,\leq\,\, 
 \sum\limits_{n=1}^{\infty} \frac{|z|^n}{n!} \sum\limits_{\mathbf{j} \in J^{(n)}_k} \left| Q^{(n)} (\widetilde{K}^{(n)}_{\mathbf{j}}) - \int_{[a,b]^n}\widetilde{K}^{(n)}_{\mathbf{j}}(\mathbf x) 
 d\mathbf x \right|
 \nonumber 
\\
&\,\,\leq\,\, \frac{c_r}{4^r}  (\Delta x_{\operatorname{max}})^r \sum_{n=1}^{\infty} \frac{|z|^n}{n!} 
n (b-a)^n \sum_{\mathbf{j} \in J^{(n)}_k} \left| \widetilde{K}^{(n)}_{\mathbf{j}} \right|_r
 \nonumber 
\\
&\,\,\leq\,\, \frac{c_r  }{2^r} ( \Delta x_{\operatorname{max}})^r
\sum_{n=1}^{\infty} n\,\frac{n^{(n+2)/2}}{n!} (|z|(b-a)k\|\textsf{K}\|_{r})^n
\nonumber
\\
&\,\,\leq\,\, \frac{c_r }{2^r} (\Delta x_{\operatorname{max}})^r \boldsymbol{\Phi}(2k(b-a)\|\textsf{K}\|_{r}|z|)),
\end{align}
as $|J^{(n)}_k| = k^n$ and $n \leq 2^n$.

\end{proof}

\section{Numerical Results for Birman-Schwinger Operators}\label{sec:Results}

In this section, we briefly summarize the construction of the Birman-Schwinger operator 
$\mathcal K(\lambda)$ discussed in the introduction that characterizes the stability of stationary pulse solutions
of certain nonlinear waves equations. In particular, we 
state the formula for the matrix-valued kernel of this operator in the special case of the hyperbolic secant solution of the nonlinear Schr\"odinger equation. 
Since this  kernel  is Lipschitz continuous and decays exponentially, we can apply the trace class regularity theorem we proved in~\cite{zweck2024regularity}, to show that the Hilbert-Schmidt operator $\mathcal K$ 
is also trace class. Consequently, by~\cite{EJF}, 
$\operatorname{det}_1 (\mathcal I+\mathcal K(\lambda)) = E(\lambda)$, where
$E(\lambda)$ is the Evans function. 
Since  $\lambda=0$ is an eigenvalue of the linearized
operator, we know that $\operatorname{det}_1 (\mathcal I+\mathcal K(0)) = 0$. 
We then use this fact to verify Theorems~\ref{truncthm} and \ref{thmcompbound}
on the rate of convergence of the numerical approximation 
$d_{p,Q}$, to the Fredholm determinant, $d_p$, as $L\to\infty$ and 
$\Delta x_{\operatorname{max}}\to 0$.  

We consider solutions of the nonlinear Schr\"odinger equation,
\begin{equation}
i\psi_t + \frac D2 \psi_{xx} + \gamma |\psi |^2 \psi = 0,
\end{equation} 
with $D\neq 0$, $\gamma > 0$, 
 that are of the form
   $ \psi(t,x) = \Psi(x)e^{-i \alpha t}$,
for some phase change $\alpha\in \mathbb R$. Then $\boldsymbol{\Psi} := [\text{Re}({\Psi}) \,\,\, \text{Im}({\Psi})]^T$
satisfies 
\begin{equation}\label{cq3}
    \partial_t \boldsymbol{\Psi} = \left( \mathbf{B}\partial_x^2 + \mathbf{N}_0 + \mathbf{N}_1|\boldsymbol{\Psi}|^2 \right) \boldsymbol{\Psi},
\end{equation}
where $\boldsymbol{\Psi} : \mathbb R \to \mathbb R^2$, and
\begin{equation}
    \mathbf{B} = 
    \begin{bmatrix}
    0 & -\frac{D}{2}\\
    \frac{D}{2} & 0
    \end{bmatrix},
    \,\,\,\,
\mathbf{N}_0 = 
\begin{bmatrix}
0 & -\alpha\\ \alpha & 0
\end{bmatrix},
 \,\,\,\,
\mathbf{N}_1 = 
\begin{bmatrix}
0 & -\gamma\\ \gamma & 0
\end{bmatrix}.
\end{equation}
Linearizing (\ref{cq3}) about a stationary solution $\boldsymbol{\Psi},$ we obtain the equation 
\begin{equation}\label{linbasic}
    \partial_t \mathbf{p} = \mathcal{L}\mathbf{p},
\end{equation}
where  
\begin{equation}\label{operatorL}
    \mathcal{L}= \mathbf{B}\partial_x^2 +  \mathbf{N}_0 +  {\mathbf{M}}(x) 
\end{equation}
with
\begin{equation}\label{MTilde}
   \mathbf{M}(x) = |\Psi|^2\mathbf{N}_1  +  2 |\Psi|^2 \mathbf{N}_1\Psi \Psi^T.
\end{equation}
In  \eqref{linbasic}, we regard $\mathbf p$ as a mapping, $\mathbf {p} : \mathbb R \to \mathbb C^2$, since the spectrum of the non-self adjoint operator, $\mathcal L$, is not constrained to be real.  

Let $\mathbf Y=\mathbf Y(x) \in \mathbb C^4$ be given by 
$\mathbf Y^T= \begin{bmatrix} \mathbf p^T & \mathbf p_x^T \end{bmatrix}$, and 
for a complex parameter, $\lambda$, let
\begin{equation}\label{A}
     \mathbf{A}_\infty(\lambda) = \begin{bmatrix}
     0 & \mathbf{I}\\ \mathbf{B}^{-1}(\lambda - \mathbf{N}_0) & 0
     \end{bmatrix} 
     \quad \text{and} \quad
        \mathbf{R}(x) = \begin{bmatrix}
    \mathbf{0} & \mathbf{0}\\ -\mathbf{B}^{-1}\mathbf{M}(x) & \mathbf{0}
    \end{bmatrix}.
 \end{equation}
Then  $\lambda \in \{ \lambda \in \mathbb C \,:\,\operatorname{Spec}(\mathbf A_\infty(\lambda) \cap i \mathbb R = \emptyset\}$ lies in the point spectrum of 
$\mathcal{L}$ if and only if the system
\begin{equation}\label{systempert}
    \partial_x \mathbf{Y} = [\mathbf{A}_{\infty}(\lambda) + \mathbf{R}(x)]\mathbf{Y}
\end{equation}
has a bounded solution. 
By the polar decomposition,  
$\mathbf R=\mathbf R_\ell \mathbf R_r$, where
\begin{equation}\label{Rrnorm}
    \mathbf{R}_r = \begin{bmatrix}
    |\mathbf{M}|^{1/2} & \mathbf{0}\\ \mathbf{0} & \mathbf{0}
    \end{bmatrix}
    \quad
    \text{and}
    \quad
    \mathbf{R}_{\ell}  = \begin{bmatrix}
    \mathbf{0} & \mathbf{0}\\ -\mathbf{B}^{-1}\mathbf{M}|\mathbf{M}|^{-1/2} & \mathbf{0}
    \end{bmatrix}.
\end{equation}
Let $\textsf K$ be the kernel 
\begin{equation}
\nonumber
    \textsf{K}(x,y;\lambda) = \begin{cases}
    -\mathbf{R}_{r}(x)\mathbf{Q}(\lambda)e^{\mathbf{A}_{\infty}(\lambda)(x-y)}\mathbf{Q}(\lambda)\mathbf{R}_{\ell}(y), & x \geq y,\\
    \mathbf{R}_r(x)(\mathbf{I}-\mathbf{Q}(\lambda))e^{\mathbf{A}_{\infty}(\lambda)(x-y)}(\mathbf{I} - \mathbf{Q}(\lambda))\mathbf{R}_{\ell}(y), & x < y,
    \end{cases}
\end{equation}
where $\mathbf Q(\lambda): \mathbb C^4 \to \mathbb C^4$ is the projection operator  onto the stable subspace of 
$\mathbf A_\infty(\lambda)$.
If  $\|\mathbf{R}\|_{\mathbb{C}^{4 \times 4}} \in L^1(\mathbb{R}) \cap L^2(\mathbb{R})$, then
$\textsf K \in L^2(\mathbb R \times \mathbb R, \mathbb C^{4\times 4})$
and the corresponding integral operator $\mathcal K(\lambda) \in \mathcal B_2(L^2(\mathbb R, \mathbb C^4))$ is Hilbert-Schmidt~\cite{EJF}.

In the special case that 
$D=\gamma=1$,  $ \Psi(x) = \operatorname{sech}(x)$ solves \eqref{cq3} with
$\alpha = -\frac 12$. Let $\mu(\lambda) = \sqrt{1 - 2i \lambda}$ and $\nu(\lambda) = \sqrt{1 + 2i \lambda}$.
Adapting a formula from~\cite{Kap},  the Evans function for the hyperbolic secant pulse
is given by~\cite{Gallo}
\begin{equation}\label{eq:Evans}
E(\lambda) \,\,=\,\, \frac{16 \lambda^4}{ ( 1 + \mu(\lambda))^4 ( 1 + \nu(\lambda))^4}.
\end{equation}
Furthermore, a calculation~\cite{Gallo} shows that 
\begin{equation}
   \textsf{K}(x,y;\lambda) = 
   \begin{cases} 
    \operatorname{sech}(x) \operatorname{sech}(y) \textsf L(y-x;\mu(\lambda),\nu(\lambda)), & x \geq y,\\
    \operatorname{sech}(x) \operatorname{sech}(y) \textsf L(x-y;\mu(\lambda),\nu(\lambda)), & x < y,    
    \end{cases}
\end{equation}
where
\begin{equation}
 \textsf L(w;\mu,\nu) \,\,=\,\, 
 \begin{bmatrix} -6 \left( \frac{e^{\mu w}}{\mu} + \frac{ e^{\nu w}}{\nu}\right) & -2\sqrt{3}i \left( \frac{e^{\mu w}}{\mu} - \frac{e^{\nu w}}{\nu} \right)\\
        2 \sqrt{3}i \left( \frac{e^{\mu w}}{\mu} - \frac{e^{\nu w}}{\nu} \right) & 
        -2 \left( \frac{e^{\mu w}}{\mu} + \frac{e^{\nu w}}{\nu} \right)
    \end{bmatrix}.
\end{equation}
In particular, $\textsf K$ is Lipschitz continuous and $\textsf K$ and its partial derivatives
 decay exponentially. Consequently, by a theorem of Zweck et al~\cite{zweck2024regularity},
the Hilbert-Schmidt operator, $\mathcal K(\lambda)$, is also trace class, 
 $\mathcal K(\lambda)\in {\mathcal B}_1(L^2(\mathbb R, \mathbb C^4))$. 

By Theorem~\ref{truncthm} and Theorem~\ref{thmcompbound} with $r=1$, 
the overall error between
the regular Fredholm determinant of $\mathcal K(\lambda)$ and its numerical
approximation on the truncated domain $[-L,L]$ with grid spacing $\Delta x$ is 
\begin{equation}
| \operatorname{det}_1(\mathcal I + \mathcal K(\lambda)) \,\,-\,\, d_{1,Q} |
\,\,\leq\,\, e^{-aL} \boldsymbol\Phi\left(\frac{8C}a\right) \,\,+\,\, \frac{e}4 \sqrt{\frac{\pi}2}
\boldsymbol\Phi(16L \| \textsf K\left(\lambda)\|_{1}\right)\Delta x,
\end{equation}
where $a$ and $C$ are given in \eqref{truncexpass}.

\begin{figure}[!thb]
    \centering
    \includegraphics[width=0.48\linewidth]{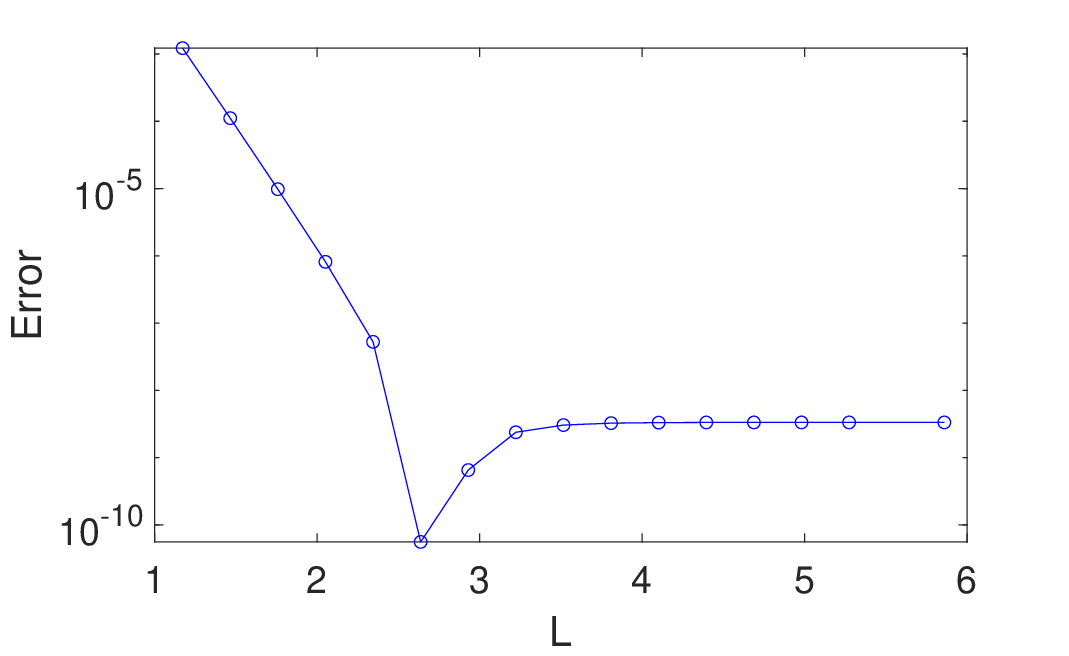}
        \includegraphics[width=0.48\linewidth]{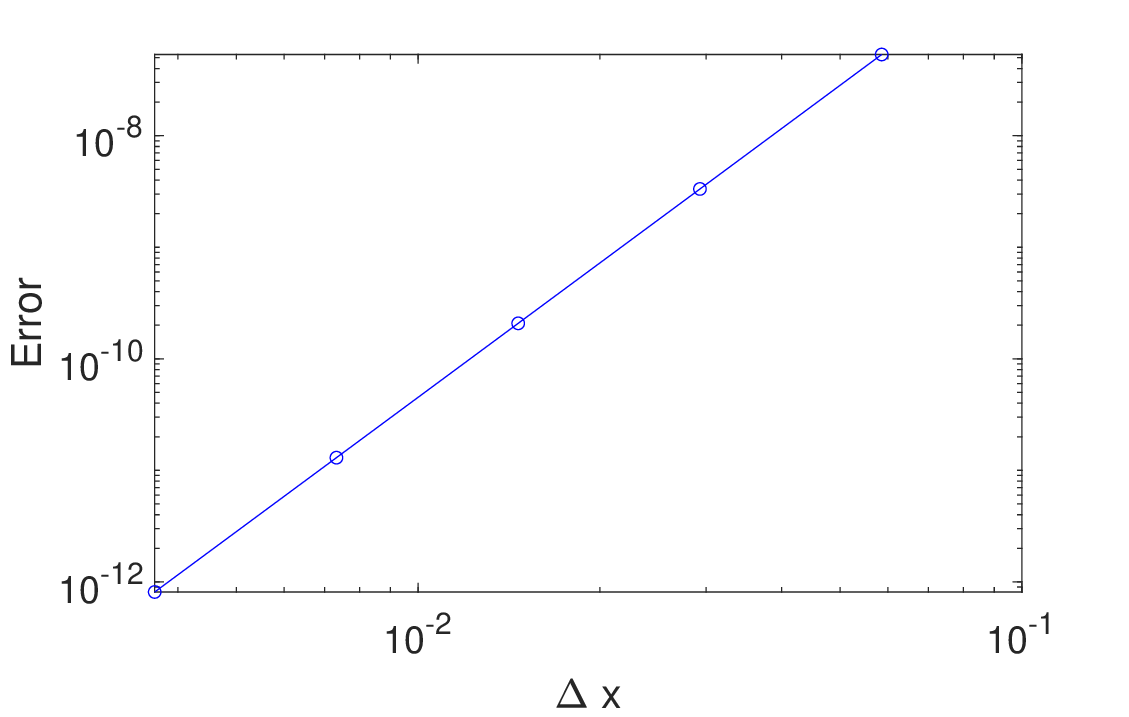}
    \caption{Error in the numerical approximation of the regular Fredholm determinant 
    at $\lambda = 0$. Left: Error as a function of the half-width, $L$, of the truncated domain for constant grid spacing, $\Delta x = 0.0293$.
    Right: Error  as a function of  $\Delta x$ with $L\approx 7.3$.}
    \label{fig:LConv}
\end{figure}

In the left panel of Figure~\ref{fig:LConv}, we plot the error between $\det_1(\mathcal{I} + \mathcal{K}(0))$ and its approximation as a function of  $L$ with a constant spacing of $\Delta x = 0.0293$. 
As expected, the error decays exponentially  as $L$ increases from $1$ to about $2.8$,
in accord with the domain truncation error estimate in Theorem~\ref{truncthm}. 
However, for larger $L$ the error  is dominated by the quadrature error,
which is independent of $L$ since we chose $\Delta x$ to be constant. 
We note that when $L=2.8$ the amplitude of the sech pulse is about 12\% of its maximum
value. This result suggests that to accurately compute the Fredholm determinant
we can used a  truncated window that is significantly less than would be required to
numerically compute the pulse solution of the nonlinear wave equation.

In the right panel of Figure~\ref{fig:LConv}, we plot the error between $\det_1(\mathcal{I} + \mathcal{K}(0))$ and its approximation as a function of  $\Delta x$ using a truncated domain
with a fixed half-width of $L \approx 7.32$.
In particular, with  $\Delta x \approx 0.0037$, the error  is less than $10^{-12}$. 
Since $\textsf K$ is Lipschitz continuous, by Theorem \ref{thmcompbound} we know that
the quadrature error should decay, at worst, as $\mathcal O(\Delta x)$. However,  we see that the actual error decays as $\mathcal O(\Delta x^4)$, which is the maximum level of accuracy achievable with the 
composite Simpson's rule. 
Given that $\textsf K$ is $C^{\infty}$ off the diagonal, this result is not surprising.
In the left panel of Figure \ref{fig:Det1AtZero5DeltaX}, we compare results obtained with the  analytical formula~\eqref{eq:Evans} for the Evans function to those with
the numerical regular  Fredholm determinant
as $\lambda \to 0$ along the imaginary axis. The  numerical determinant gains approximately one order of magnitude in accuracy each time $\Delta x$ is halved. 

\begin{figure}[h]
    \centering
    \includegraphics[width=0.48\linewidth]{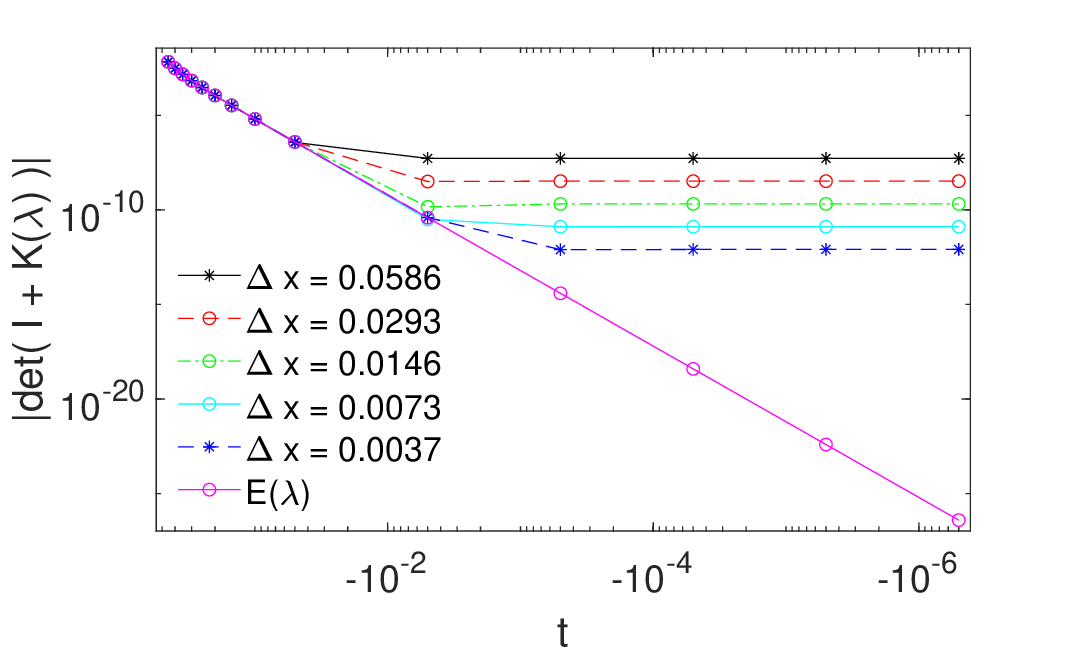}
    \includegraphics[width=0.51\linewidth]{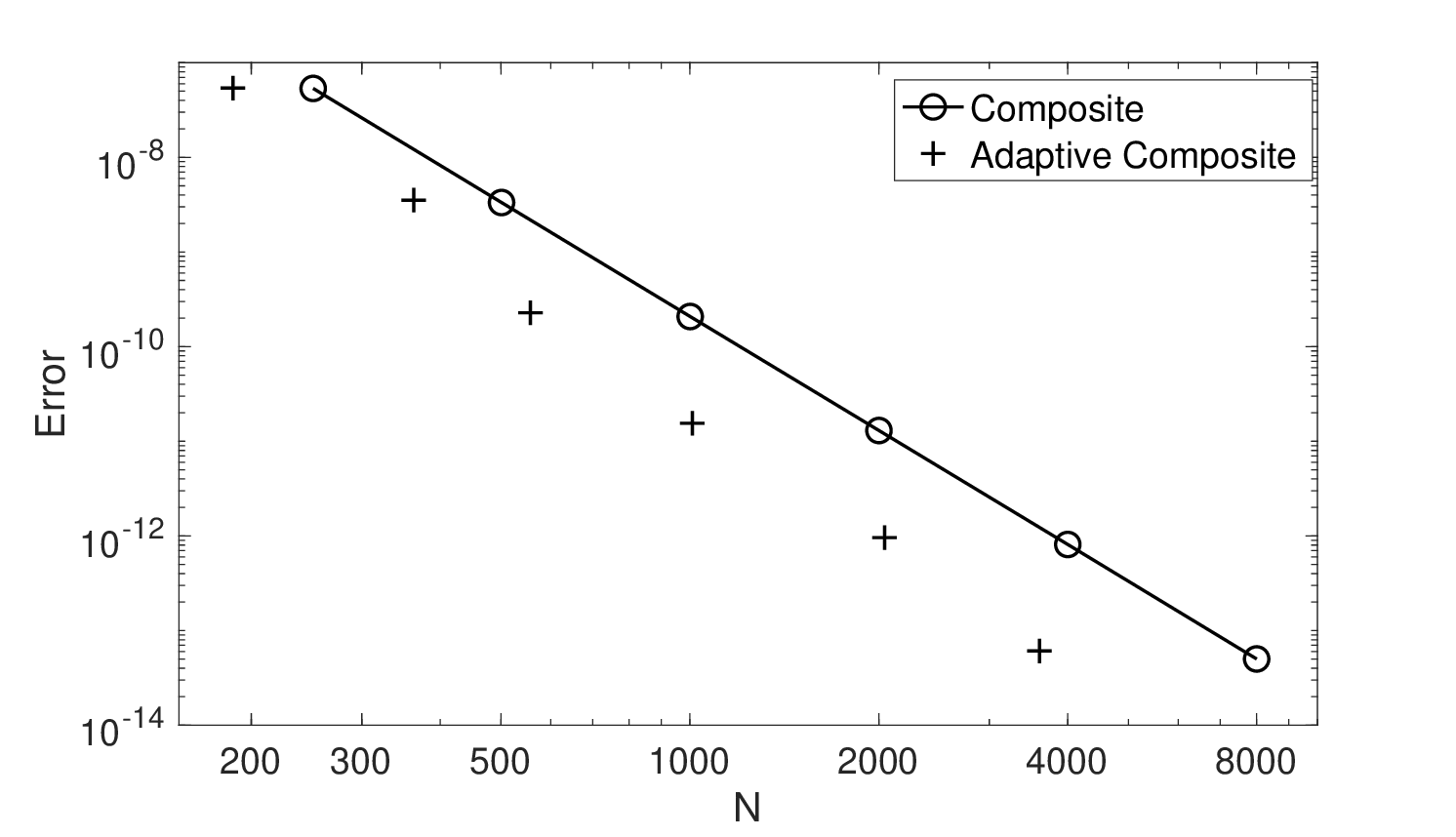} 
    \caption{Left: Dependence of the approximate determinant, $|\det_{1,Q}(\mathcal{I} + \mathcal{K}(\lambda))|$, on the quadrature spacing $\Delta x$ as $\lambda \to 0$ along the line $\lambda(t) = it$. Right: Comparison of error in approximate determinant for constant step sizes (circles) and adaptively chosen step sizes (plus signs)  as a function of the number of points in the discretization of the interval, with $L\approx 7.3$.}
    \label{fig:Det1AtZero5DeltaX}
\end{figure}

To investigate the advantage of using an adaptive quadrature rule,
in the right panel of Figure~\ref{fig:Det1AtZero5DeltaX} 
we plot the error in the approximate determinant
at $\lambda=0$ as a function of the number of points, $N$, in the discretization of the interval. Results with constant step sizes are shown with circles while those obtained with  adaptively chosen step sizes are shown with plus signs. For the adaptive method the step
sizes varied by a factor of 8 with larger step sizes towards the end of the interval where the 
amplitude of the pulse was smaller. To achieve an error of $10^{-13}$ with the adaptive method we need less than half the number of points as with constant step sizes. Since the computational time is dominated by the time to form the $4N\times 4N$ matrix, $\mathbf K_Q$, the adaptive method
is four times as efficient.

\section*{Statements and Declarations}

This work was funded by the National Science Foundation under DMS-2106203, DMS-2525548 and DMS-2106157.
The computational code and simulation data sets used for the numerical results in this paper  are available from the corresponding author on reasonable request. 
The authors have no competing interests to declare that are relevant to the content of this article.

\bibliographystyle{siamplain}
\bibliography{Fredholm}

\end{document}